\documentclass[a4paper, reqno, 11pt]{amsart}

\usepackage[colorlinks=true,pagebackref,hyperindex]{hyperref}
\hypersetup{colorlinks = true, linkcolor = red, anchorcolor =red, citecolor = green, filecolor = red, urlcolor = blue, pdfauthor=author}

\usepackage[normalem]{ulem}
\usepackage{graphics,epic}
\usepackage{amsmath,amssymb, amsthm,mathrsfs}
\usepackage[all,2cell]{xy}
\usepackage{multicol}
\usepackage{multirow}
\usepackage{float}
\usepackage{caption}
\usepackage{enumitem}
\usepackage{subcaption}
\usepackage{adjustbox}
\usepackage{shorttoc}
\usepackage{url}
\usepackage{tikz}
\usepackage{tikz-cd}
\usetikzlibrary{positioning, arrows.meta, shapes.geometric, decorations.markings, patterns}
\usetikzlibrary{matrix,positioning,decorations.markings,arrows,decorations.pathmorphing, arrows.meta, calc,
	backgrounds,fit,positioning,shapes.symbols,chains,shadings,fadings,calc}
\tikzset{->-/.style={decoration={  markings,  mark=at position #1 with
			{\arrow{>}}},postaction={decorate}}}
\tikzset{-<-/.style={decoration={  markings,  mark=at position #1 with
			{\arrow{<}}},postaction={decorate}}}

\newtheorem{theorem}{Theorem}[section]
\newtheorem*{theorem*}{Theorem}

\newtheorem{lemma}[theorem]{Lemma}

\newtheorem{proposition}[theorem]{Proposition}
\newtheorem{corollary}[theorem]{Corollary}

\newtheorem{prop}[theorem]{Proposition}

\theoremstyle{definition}
\newtheorem{definition}[theorem]{Definition}
\newtheorem{remark}[theorem]{Remark}
\newtheorem*{notation}{Notations}
\newtheorem{example}[theorem]{Example}

\newcommand{\opname}[1]{\operatorname{\mathsf{#1}}}

\renewcommand{\ker}{\opname{ker}\nolimits}

\newcommand{\Irr}{\mathrm{Irr}}
\newcommand{\Id}{\operatorname{id}}

\newcommand{\Hom}{\opname{Hom}}

\newcommand{\Sym}{\opname{Sym}}

\makeatletter
\newcommand*\bigcdot{\mathpalette\bigcdot@{.5}}
\newcommand*\bigcdot@[2]{\mathbin{\vcenter{\hbox{\scalebox{#2}{$\m@th#1\bullet$}}}}}
\makeatother

\makeatletter
\renewcommand{\subsubsection}{%
	\@startsection{subsubsection}{3}%
	\z@{.5\linespacing\@plus.7\linespacing}{.5\linespacing}%
	{\normalfont\normalsize\bfseries}
}
\makeatother

\begin{document}
	\title[Symmetric Square]{Hochschild Cohomology of the Symmetric Square of an Annulus with Stops}
	
	\author{Xingyuan Lu}
	\address{School of Mathematics, Nanjing University, Nanjing 210093, Jiangsu, China}
	\curraddr{}
	\email{xingyuanlu@smail.nju.edu.cn}
	\author{Zhengfang Wang}
	\address{School of Mathematics, Nanjing University, Nanjing 210093, Jiangsu, China}
	\email{zhengfangw@nju.edu.cn}
	\keywords{Fukaya category, symmetric product, Hochschild cohomology, gentle algebra, reduction system, \(A_\infty\)-algebra}
	
	\subjclass[2020]{Primary 16E40, 16E45; Secondary 16G20, 18G70, 53D37}

	\thanks{}
	\date{}
	\dedicatory{}
	\begin{abstract}
		We compute the Hochschild cohomology of the partially wrapped Fukaya category of the symmetric square of an annulus with stops. Using an explicit generating set in this category, we give a description of its dg endomorphism algebra $\widetilde{\mathcal{A}}_{n_1,n_2}$ via a quiver with relations. We show that when one boundary component has a single stop, the dg algebra is formal; however, when both boundaries contain at least two stops, it is not formal, and its minimal $A_\infty$-model carries a nontrivial operation $m_3$. This allows us to compute its Hochschild cohomology via reduction systems and spectral sequences, and to construct a family of dg deformations associated to the resulting Hochschild cocycles. 
	\end{abstract}
	\maketitle

	\section{Introduction}
	
	The Fukaya category of a symplectic manifold, particularly its partially wrapped variant for Liouville manifolds with stops, has become a central object in symplectic geometry and homological mirror symmetry \cite{Fukaya-Seidel-Smith2009, Sylvan2019}. When the symplectic manifold arises as a symmetric product of a Riemann surface with stops, the resulting Fukaya category exhibits deep connections with representation theory and finite-dimensional algebras \cite{Auroux2010a, Auroux2010b}. In their seminal work, Auroux introduced the partially wrapped Fukaya category $\mathcal{W}(\operatorname{Sym}^k(\Sigma), \Lambda^{(k)})$ of the symmetric product $\operatorname{Sym}^k(\Sigma)$ with stops $\Lambda^{(k)}$ for a Riemann surface $\Sigma$ with boundary stops $\Lambda$, and gave an explicit description of its generating sets \cite{Auroux2010a}.
	
	In this setting, a striking phenomenon is that the algebraic structures governing these geometric categories often coincide with well-known objects in representation theory. For instance, when $(\Sigma, \Lambda)$ is a disk with stops, Dyckerhoff, Jasso, and Lekili showed in \cite{Dyckerhoff-Jasso-Lekili2021} that the dg endomorphism algebra of a specific generating set of $\mathcal{W}(\operatorname{Sym}^k(\Sigma), \Lambda^{(k)})$ is quasi-isomorphic to the higher Auslander algebras of type $\mathbb{A}$ studied in \cite{Iyama2007}. In \cite{Lekili-Polishchuk2020b}, Lekili and Polishchuk studied the complement of $n+2$ generic hyperplanes in $\mathbb{CP}^n$ and proved that its partially wrapped Fukaya category is equivalent to a certain categorical resolution of the derived category of the singular affine variety $x_1x_2\cdots x_{n+1}=0$. These families of algebras have been extensively studied in the context of tilting theory, cluster algebras, and singularity theory.
	
	Hochschild cohomology is a fundamental derived invariant that encodes essential information about deformation theory and higher structures \cite{Gerstenhaber1964, Keller1998}. For Fukaya categories, it is closely related to the symplectic cohomology of the underlying manifold \cite{Seidel2008, Ganatra2012, Smith2026}. Computing the Hochschild cohomology thus provides a rigorous algebraic counterpart to these intricate symplectic invariants. Seidel in \cite{Seidel2002} proposed a relationship between $A_\infty$ deformations of Fukaya categories and partial compactifications of the underlying symplectic manifolds. A complete answer to this proposal in the case of graded surface with stops is given in \cite{Barmeier-Schroll-Wang2025}.
	
	In this paper, we systematically compute the Hochschild cohomology of the partially wrapped Fukaya categories $\mathcal{W}(\operatorname{Sym}^2(\Sigma), \Lambda^{(2)})$ for an annulus with stops of type
	\(\widetilde{\mathbb A}_{n_1,n_2}\). This should be viewed as a first step toward understanding the $A_\infty$ deformations of partially wrapped Fukaya categories of symmetric products of general surfaces with stops. While the methods developed here are expected to extend to the general case, the corresponding computations become significantly more involved due to the rapid growth in the complexity of wrapped morphism spaces in higher symmetric products. We restrict to the annulus in order to present the construction and computations in a controlled setting where the relevant structures can be described explicitly.
	
	Our primary focus is the annulus case. Let $\Sigma$ be an annulus equipped with a set of stops $\Lambda$ of type $\widetilde{\mathbb{A}}_{n_1,n_2}$, meaning there are $n_1$ and $n_2$ stops on its two boundary components, respectively. We denote by $\widetilde{\mathcal{A}}_{n_1,n_2}$ the \emph{symmetric square dg algebra}, defined as the dg endomorphism algebra of a specific generator associated to a full formal arc system on $(\Sigma, \Lambda)$ within $\mathcal{W}(\operatorname{Sym}^2(\Sigma), \Lambda^{(2)})$. By explicitly describing the corresponding quivers with relations, we establish the following structural dichotomy regarding its formality.
	
	\begin{theorem}[see Theorem~\ref{thm:A_infty_qiso} and Corollary~\ref{Massey product}]
		Let $\Sigma$ be an annulus with stops $\Lambda$ of type $\widetilde{\mathbb{A}}_{n_1,n_2}$ with $n_2\geq n_1 \geq 1$. Then the symmetric square dg algebra $\widetilde{\mathcal{A}}_{n_1,n_2}$ exhibits the following properties:
		\begin{enumerate}
			\item If $n_1 = 1$, then the dg algebra $\widetilde{\mathcal{A}}_{1,n_2}$ is formal.
			\item If $n_1 >1$, then $\widetilde{\mathcal{A}}_{n_1,n_2}$ is not formal. Instead, it admits a minimal $A_\infty$-model $$(\mathcal{A}_{n_1,n_2}, m_2, m_3, \dots)$$ with $m_i = 0$ for $i>3$. Here, $\mathcal{A}_{n_1,n_2} = \mathrm{H}^*(\widetilde{\mathcal{A}}_{n_1,n_2})$ is the cohomology algebra. 
		\end{enumerate}
	\end{theorem}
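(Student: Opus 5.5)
The plan has three stages: present $\widetilde{\mathcal{A}}_{n_1,n_2}$ explicitly, transfer to cohomology by homological perturbation, and then decide formality using Hochschild cohomology and a Massey product.

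\textbf{Stage 1: an explicit presentation.} Fix the full formal arc system $\{\gamma_1,\dots,\gamma_{n}\}$ with $n=n_1+n_2$ on $(\Sigma,\Lambda)$. Following Auroux, the generators of $\mathcal{W}(\operatorname{Sym}^2(\Sigma),\Lambda^{(2)})$ are the products $\gamma_i\times\gamma_j$ with $i<j$. Their morphism complexes are computed from the surface as the graded-symmetric square of the gentle algebra of $(\Sigma,\Lambda)$. The latter is the $\widetilde{\mathbb A}_{n_1,n_2}$ quiver with relations $\alpha\beta=0$ at the stops.

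Concretely, $\widetilde{\mathcal{A}}_{n_1,n_2}$ is the path algebra of a quiver $Q^{(2)}$ with:
\begin{itemize}
\item one vertex for each pair $\{i,j\}$;
\item one arrow for each arrow of $Q$ acting on one factor.
\end{itemize}
It has two kinds of relations. Commutativity squares, with Koszul signs, arise when the two factors move independently. Degenerate configurations, where both factors would sit on the same arc, are killed. In addition, a differential $d$ appears precisely where the two moving points would collide. These collisions sit near the chords of the annulus that wrap around a boundary component.

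I would write this down as a quiver with relations and a degree $1$ differential. I would then compute $\mathcal{A}_{n_1,n_2}=\mathrm{H}^*$ by filtering by path length, or by direct Gröbner/reduction-system bases.

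\textbf{Stage 2: homotopy transfer.} Choose a splitting $\widetilde{\mathcal{A}}=\mathcal{A}\oplus B\oplus dB$ with contracting homotopy $h$, projection $p$ and inclusion $i$. Kadeishvili's tree formula then gives the transferred operations
\[
m_k=\sum_{T} \pm\, p\, \mu(h\mu(\cdots),\dots)\, i^{\otimes k},
\]
summed over planar binary trees $T$ with $k$ leaves.

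The key point is that the noncohomological part is small. I expect $d$ to be nonzero only on a few arrows, namely those linking the two collision loci. I also expect $h$ to vanish on any product containing an $h$-output. Two consequences follow:
\begin{itemize}
\item all trees of depth at least $3$ vanish, so $m_k=0$ for $k>3$;
\item when $n_1=1$, the only boundary component where a collision square can involve two distinct stops is degenerate, and one checks that no nonzero tree survives, so $m_k=0$ for all $k\ge 3$.
\end{itemize}
Alternatively, for $n_1=1$ I would exhibit a dg subalgebra mapping quasi-isomorphically to both $\widetilde{\mathcal{A}}$ and $\mathrm{H}^*$. This subalgebra is obtained by choosing a cocycle basis closed under multiplication.

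\textbf{Stage 3: non-formality for $n_1>1$.} Here there is a specific triple of composable arrows $(a,b,c)$ on the first boundary component with $ab=0=bc$ in $\mathcal{A}$. Its transferred product $m_3(a,b,c)=p\bigl(h(ab)c\pm a\,h(bc)\bigr)$ is a nonzero basis path.

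A nonzero $m_3$ alone does not prove non-formality; one must show it is not gauge-trivial. I would do this by showing that $[m_3]$ is nonzero in $\mathrm{HH}^{2}(\mathcal{A},\mathcal{A})_{-1}$. Equivalently, the Massey product $\langle a,b,c\rangle$ has zero indeterminacy. This holds because $a\mathcal{A}^{\bullet}+\mathcal{A}^{\bullet}c$ in the relevant degree between the relevant vertices is zero, which is checked from the path basis, since the target space is one-dimensional.

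\textbf{Expected main obstacle.} The hardest step is Stage 1: getting the correct signs and the exact list of relations and differentials in the symmetric square, so that $h$ can be chosen to give the clean vanishing of higher trees. I would first do $(n_1,n_2)=(1,1)$ and $(2,2)$ by hand to fix conventions, and only then generalize.
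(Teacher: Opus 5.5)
Your overall architecture is sound and is essentially the homotopy-transfer version of the paper's argument. The paper's explicit $A_\infty$-quasi-isomorphism has $F_2(p\,x_{i,i+1},\,y_{i,i+2}\,q)=\pm p\,z_{i,i+1}\,q$, which is $h\mu$ for the obvious contraction. Its $m_3$ is your tree $p\bigl(a\,h(bc)\bigr)$, and non-formality is detected by a triple Massey product, as you propose.

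The gap is that the proposal never supplies the concrete input on which both halves of the theorem hinge, and Stage 1 as written cannot produce it. Suppose the generators are only ``an arrow of $Q$ acting on one factor'', i.e. the elements $\operatorname{id}_{L_i}\otimes\alpha_j$, $\alpha_i\otimes\operatorname{id}_{L_j}$, $\beta\otimes\operatorname{id}_{L_i}$. Each is a single strand with no crossing, hence closed, and the Leibniz rule then forces $d=0$ everywhere. The resulting algebra would be formal for every $n_1$.

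You must add the indecomposable generators $z_{i,i+1}=\alpha_i\alpha_{i+1}\otimes\operatorname{id}_{L_{i+1}}\colon L_{i,i+1}\to L_{i+1,i+2}$. Their strand crosses the idempotent strand of $L_{i+1}$, and $d z_{i,i+1}=x_{i,i+1}y_{i,i+2}$. Above all you need the relation $u_{1,n_1}z_{n_1,n_1+1}=x_{1,n_1}x_{1,n_1+1}u_{1,n_1+2}$ together with $u_{1,n_1}x_{n_1,n_1+1}=0$.

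The decisive computation is then $m_3(u_{1,n_1},x_{n_1,n_1+1},y_{n_1,n_1+2})=\pm u_{1,n_1}\,h(x_{n_1,n_1+1}y_{n_1,n_1+2})=\pm u_{1,n_1}z_{n_1,n_1+1}=\pm x_{1,n_1}x_{1,n_1+1}u_{1,n_1+2}\neq 0$. The dichotomy is simply that the arrow $u_{1,n_1}$, and hence this relation, exists only when $n_1\ge 2$. Your geometric explanation for $n_1=1$ does not locate this mechanism, and the ``specific triple'' of Stage 3 is never identified.

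Three steps you do spell out also need repair.

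First, the $m_4$ vanishing is not justified. The hypothesis that $h$ vanishes on products containing an $h$-output kills every tree in which an $h$-output re-enters $h$. It does not kill the balanced tree $p\,\mu\bigl(h\mu(a,b),\,h\mu(c,d)\bigr)$ contributing to $m_4$. You also need that a product of two $z$-containing paths is zero: the zig-zag relations bring the two diagonal arrows together into $z_{j,j+1}z_{j+1,j+2}=0$. This is exactly the arity-four check in the paper.

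Second, your alternative for $n_1=1$ fails. The subalgebra spanned by $z$-free paths has zero differential but contains the nonzero boundaries $x_{i,i+1}y_{i,i+2}$, so its inclusion into $\widetilde{\mathcal A}_{1,n_2}$ is not a quasi-isomorphism. The map must go the other way, as in the paper: the surjection $\widetilde{\mathcal A}_{1,n_2}\to\mathcal A_{1,n_2}$ sending each $z_{i,i+1}$ to $0$. It is a dg algebra map because, without the type-4 relation, the only relation mixing a $z$ with a $u$ is the monomial $u_{1,3}z_{2,3}=0$ (when defined).

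Third, your Massey product argument works once the triple $(u_{1,n_1},x_{n_1,n_1+1},y_{n_1,n_1+2})$ is fixed. The indeterminacy is indeed zero, since the cohomology from $L_{n_1,n_1+1}$ to $L_{n_1+1,n_1+2}$ and from $L_{1,n_1}$ to $L_{n_1,n_1+2}$ vanishes. But ``equivalently'' is too strong: a Massey product not containing $0$ implies $[m_3]\neq 0$ in Hochschild cohomology, not conversely.
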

	
	When $n_1=1$, this formality implies that the Hochschild cohomology of $\widetilde{\mathcal{A}}_{1,n_2}$ coincides with that of its cohomology algebra $\mathcal{A}_{1,n_2} = \mathrm{H}^*(\widetilde{\mathcal{A}}_{1,n_2})$. However, for $n_1 \ge 2$, the presence of the higher operation $m_3$ plays a critical role in the computation, since we have the following isomorphisms 
	$$\operatorname{HH}^*(\mathcal{W}(\operatorname{Sym}^2(\Sigma), \Lambda^{(2)})) \cong \operatorname{HH}^*(\widetilde{\mathcal{A}}_{n_1,n_2}) \cong \operatorname{HH}^*(\mathcal A_{n_1,n_2}, m_2, m_3).$$
	To systematically compute the Hochschild cohomology of the symmetric square dg algebra $\widetilde{\mathcal{A}}_{n_1,n_2}$, we proceed in two steps.
	
	First, using the reduction system method developed in \cite{Chouhy-Solotar2015, Barmeier-Wang2020}, we compute the Hochschild cohomology of the cohomology algebra $\mathcal{A}_{n_1,n_2}$ (viewed as an ungraded algebra, namely after forgetting its internal
	grading). Furthermore, since the partially wrapped Fukaya category naturally endows $\mathcal{A}_{n_1,n_2}$ with an internal $\mathbb{Z}$-grading determined by a single parameter $m$ up to derived equivalence (see Subsection~\ref{subsec:grading}), we refine our calculations to the bigraded Hochschild cohomology groups.
	
	\begin{theorem}[see Theorems~\ref{thm: main HH result} and \ref{thm:graded_HH}]
		Let $n_2 \geq n_1 \geq 1$. The dimensions of the  bigraded  Hochschild cohomology of the graded symmetric square algebra $\mathcal{A}_{n_1,n_2}$   are completely classified as follows:
		\begin{center}
			\renewcommand{\arraystretch}{1.2}
			\begin{tabular}{c|ccccc}
				\hline
				$\mathcal{A}_{n_1,n_2}$& $\operatorname{HH}^{0,0}$ & $\operatorname{HH}^{1,0}$ & $\operatorname{HH}^{3,-1}$ & $\operatorname{HH}^{1,-m}$ & $\operatorname{HH}^{2,m+1}$ \\
				\hline
				$n_1=1,\; n_2=2$ & $1$ & $1$ & $0$ & $1$ & $0$  \\
				$n_1=1,\; n_2\ge 3$ & $1$ & $1$ & $0$ & $0$ & $0$ \\
				$n_1=2,\; n_2=2$ & $1$ & $2$ & $1$ & $1$ & $1$ \\
				$n_1=2,\; n_2\ge 3$ & $1$ & $2$ & $1$ & $0$ & $1$ \\
				$n_2\ge n_1 \ge 3$ & $1$ & $2$ & $1$ & $0$ & $0$ \\
				\hline
			\end{tabular}
		\end{center}
		All bigraded Hochschild cohomology not listed in the table vanish.
	\end{theorem}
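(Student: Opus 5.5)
Our strategy for the table is to compute the ungraded Hochschild cohomology of $\mathcal{A}_{n_1,n_2}$ first and then read off the internal degrees. The quiver-with-relations presentation of $\mathcal{A}_{n_1,n_2}=\mathrm{H}^*(\widetilde{\mathcal{A}}_{n_1,n_2})$ from the previous sections is the input. We choose a path order and a reduction system $R$ whose left-hand sides are the monomials (or leading terms) of the relations. By the Diamond Lemma, $R$ yields a basis of irreducible paths. Following \cite{Chouhy-Solotar2015, Barmeier-Wang2020}, we then build the small projective bimodule resolution $P_\bullet$ whose $j$-th term is generated by the $(j-1)$-ambiguities $S_j$ (so $S_0$ = vertices, $S_1$ = arrows, $S_2$ = left-hand sides of $R$, $S_3$ = overlaps, ...). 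Applying $\Hom_{A^e}(-,A)$ gives the complex
\[
0\to \Bbbk Q_0\|B \to \Bbbk S_1\|B \to \Bbbk S_2\|B \to \Bbbk S_3\|B\to\cdots,
\]
where $\Bbbk S_j\|B$ denotes parallel pairs (element of $S_j$, irreducible path with the same endpoints). The first step is to check that the overlaps terminate: $S_j=\emptyset$ for $j\ge 4$, which gives global dimension $3$ consistent with $\operatorname{HH}^3$ at most. If this fails we would need to control $S_4$ by hand.

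The second step is to enumerate parallel pairs in each degree. This is where the annulus geometry enters. The quiver has an $n_1$-cycle and an $n_2$-cycle of "one-point" arrows, tensored into pairs of arcs. A nonzero irreducible path parallel to a given arrow or relation is essentially forced to wind around one of the two boundary components, and this is possible only in small cases. We expect $n_2=2$ to allow an extra parallel path giving $\operatorname{HH}^{1,-m}$, and $n_1=2$ to allow one parallel to a relation, giving $\operatorname{HH}^{2,m+1}$. We would organise the count by orbit types under the rotation symmetry. With that count in hand:
\begin{itemize}
\item $\operatorname{HH}^0$ is the centre, which should be just $\Bbbk$.
\item $\operatorname{HH}^1$ in weight $0$ comes from the first cohomology of the quiver graph: the Euler characteristic of the quiver, namely the number of independent cycles, giving $1$ when $n_1=1$ and $2$ otherwise, modulo inner derivations.
\item $\operatorname{HH}^3$ comes from the top-degree ambiguities paired with vertex idempotents in a single configuration, which survives exactly when $n_1\ge2$.
\end{itemize}

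The third step computes the differentials of the complex explicitly, using the formulas of \cite{Barmeier-Wang2020} for the first terms and the reduction of overlaps for $\Bbbk S_2\|B\to \Bbbk S_3\|B$. We split into the five cases of the table and in each case compute ranks. Finally, for the bigrading, each parallel pair $(s,p)$ has internal degree $|p|-|s|$, computed from the grading of Subsection~\ref{subsec:grading}, which depends on the parameter $m$. We then check that the surviving classes land in the degrees listed, for instance the winding path in $\operatorname{HH}^1$ having degree $-m$.

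The main obstacle is the rank computation of $\Bbbk S_2\|B\to\Bbbk S_3\|B$ and the proof that all other classes cancel. The number of parallel pairs grows with $n_1,n_2$, so a uniform argument is needed. Our first approach would be to filter by path length and winding number and use a spectral sequence or a contracting homotopy on each graded piece, reducing to a finite check near the boundary cycles. The special cases $n_i\le 2$ are exactly where extra winding paths of short length appear.
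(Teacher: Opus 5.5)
Your route is the paper's: the reduction system of Proposition~\ref{reduction system}, the resulting complex in Hochschild degrees $0$--$3$, explicit ranks in each case, and internal degree $|p|-|s|$ read off homogeneous representatives. The complex stops in degree $3$ because every $1$-ambiguity ends in a $y$-arrow and no element of $S$ begins with one. The problem is that all of the content is deferred to ``compute ranks''. Some of the structural predictions you would use to organise that computation are wrong, and one of them changes the table.

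\textbf{The $\operatorname{HH}^3$ class.} It cannot be an ambiguity paired with a vertex idempotent. $Q_{n_1,n_2}$ is acyclic, so no element of $S_3$ is parallel to any $e_{i,j}$. For the same reason there are no $n_1$- or $n_2$-cycles or winding paths to organise the count by. The class is
\[
\omega=(u_{1,n_1}x_{n_1,n_1+1}y_{n_1,n_1+2}\parallel x_{1,n_1}x_{1,n_1+1}u_{1,n_1+2}),
\]
which exists only for $n_1\ge 2$. It survives because $\partial^2$ maps the cochains $(u_{1,j}x_{n_1+1,j}\parallel y_{1,j}\cdots y_{n_1,j}x_{n_1+1,j})$ onto all the other basis elements of $\Hom(\Bbbk S_3,\mathcal A_{n_1,n_2})$, while nothing reaches $\omega$. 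Its internal degree is $m-(m+1)=-1$, which is exactly why the column is $\operatorname{HH}^{3,-1}$. With an idempotent output, as your heuristic suggests, you would have placed it in $\operatorname{HH}^{3,-m-1}$.

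\textbf{$\operatorname{HH}^1$ and where the work lies.} The weight-zero part is not the number of independent cycles of the quiver graph. For $(n_1,n_2)=(1,3)$ that number is already $8-6+1=3$, and it grows quadratically. A diagonal cochain $\sum_\alpha\lambda_\alpha(\alpha\parallel\alpha)$ is a cocycle only if $\lambda_a+\lambda_b=\lambda_c+\lambda_d$ for every commutativity relation $ab=cd$. So you must count cycles after filling in the commutative squares, which leaves $\sum_{i=2}^{n_1}(u_{1,i}\parallel u_{1,i})$ and $\sum_{i=2}^{n_2}(u_{1,n_1+i}\parallel u_{1,n_1+i})$.

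The substantial rank computation is $\partial^1$, not $\partial^2$, and no filtration or contracting homotopy is needed. Telescoping sums of $(y_{i,k}\parallel y_{i,k})$ and $(u_{1,k}\parallel u_{1,k})$ hit the cochains $(ab\parallel cd)$ recording the commutativity relations (Lemma~\ref{lemma:T1T2_in_image}). The non-diagonal cochains $(u_{1,j}\parallel y_{1,j}\cdots y_{n_1,j})$, $n_1+2\le j\le n_1+n_2$, have triangular images, as do $(y_{1,j}\parallel u_{1,j})$ when $n_1=1$.

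This also corrects your mechanism for $\operatorname{HH}^{1,-m}$. The parallel path $y_{1,j}\cdots y_{n_1,j}$ exists for every $n_2$. What is special about $n_2=2$ is that $u_{1,n_1+2}$ then occurs in no relation, so $(u_{1,n_1+2}\parallel y_{1,n_1+2}\cdots y_{n_1,n_1+2})$ is a non-trivial cocycle of internal degree $-m$. Your prediction for $n_1=2$ is right: $(x_{1,2}y_{1,3}\parallel u_{1,2})$ lies outside $\operatorname{Im}\partial^1$ and has internal degree $m+1$.
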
 
	
	The bigraded computation is useful because it locates the possible
	non-formal contribution to the minimal model. For \(n_1\ge 2\), the ternary operation \(m_3\) has internal degree \(-1\).
	Under the identification of Hochschild cochains with the reduction system complex, it is represented by the generator of \(\operatorname{HH}^{3,-1}(\mathcal A_{n_1,n_2})\). This gives the first obstruction to replacing the minimal model by the ordinary graded algebra \((\mathcal A_{n_1,n_2},m_2)\).
	
	Second, we use a spectral sequence argument to bridge the gap between the graded algebra $(\mathcal A_{n_1,n_2}, m_2)$ and the $A_\infty$-minimal model $(\mathcal A_{n_1,n_2}, m_2, m_3)$ for $n_1 \geq 2$. By employing the spectral sequence associated with the internal degree filtration on the minimal $A_\infty$-model, we explicitly compute the Hochschild cohomology of the symmetric square dg algebra $\widetilde{\mathcal{A}}_{n_1,n_2}$. The first nontrivial differential in this spectral sequence is precisely induced by the Gerstenhaber bracket with $m_3$. By tracking the survival of the cocycles, we obtain our main computational result.
	
	\begin{theorem}[see Theorem~\ref{thm:dg_HH_dimensions}]
		The dimensions of the Hochschild cohomology of the symmetric square dg algebra $\widetilde{\mathcal{A}}_{n_1,n_2}$ are given as follows:
		\begin{center}
			\renewcommand{\arraystretch}{1.2}
			\begin{tabular}{c|cccc}
				\hline
				$\widetilde{\mathcal A}_{n_1,n_2}$ & $\operatorname{HH}^{0}$ & $\operatorname{HH}^{1}$ & $\operatorname{HH}^{1-m}$ & $\operatorname{HH}^{m+3}$ \\
				\hline
				$n_1=1,\; n_2=2$ & $1$ & $1$ & $1$ & $0$ \\
				$n_1=1,\; n_2\ge 3$ & $1$ & $1$ & $0$ & $0$ \\
				$n_1=2,\; n_2=2$ & $1$ & $1$ & $1$ & $1$ \\
				$n_1=2,\; n_2\ge 3$ & $1$ & $1$ & $0$ & $1$ \\
				$n_2\ge n_1\ge 3$ & $1$ & $1$ & $0$ & $0$ \\
				\hline
			\end{tabular}
		\end{center}
	\end{theorem}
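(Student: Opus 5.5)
The plan is to compute $\operatorname{HH}^*(\widetilde{\mathcal A}_{n_1,n_2})$ through the minimal model, using the chain of isomorphisms
$$\operatorname{HH}^*(\widetilde{\mathcal A}_{n_1,n_2})\cong \operatorname{HH}^*(\mathcal A_{n_1,n_2},m_2,m_3)$$
from Theorem~\ref{thm:A_infty_qiso}. The total degree of a class in $\operatorname{HH}^{p,q}$ is $p+q$.

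\textbf{Case $n_1=1$.} Here $\widetilde{\mathcal A}_{1,n_2}$ is formal, so its Hochschild cohomology equals that of the graded algebra $\mathcal A_{1,n_2}$. We take the bigraded table of Theorem~\ref{thm:graded_HH} and collapse bidegrees to total degree:
\[
\operatorname{HH}^{0,0}\mapsto \operatorname{HH}^{0},\qquad
\operatorname{HH}^{1,0}\mapsto \operatorname{HH}^{1},\qquad
\operatorname{HH}^{1,-m}\mapsto \operatorname{HH}^{1-m}.
\]
This gives the first two rows of the table directly. If $m$ makes two total degrees coincide (for instance $m=0$), the dimensions add, and I would record that as a remark rather than change the table.

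\textbf{Case $n_1\ge 2$.} We filter the Hochschild complex of the minimal model $(\mathcal A,m_2,m_3)$ by internal degree. Since $m_3$ has internal degree $-1$ and $m_i=0$ for $i>3$, the Hochschild differential splits as
$$\delta=[m_2,-]+[m_3,-],$$
where the second term lowers internal degree by one. The resulting spectral sequence has
\[
E_1=\operatorname{HH}^{*,*}(\mathcal A_{n_1,n_2}),\qquad d_1=[m_3,-]_{\mathrm{Gerstenhaber}},\qquad d_1:\operatorname{HH}^{p,q}\to \operatorname{HH}^{p+2,q-1}.
\]
The $E_1$ page is known from Theorem~\ref{thm:graded_HH}, and convergence is not an issue: each total degree has only finitely many nonzero bidegrees, because the page is finite dimensional.

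The possible nonzero maps among the listed groups are:
\[
\operatorname{HH}^{1,0}\to\operatorname{HH}^{3,-1},\qquad
\operatorname{HH}^{0,0}\to\operatorname{HH}^{2,-1}=0,\qquad
\operatorname{HH}^{1,-m}\to\operatorname{HH}^{3,-m-1}=0,\qquad
\operatorname{HH}^{2,m+1}\to\operatorname{HH}^{4,m}=0.
\]
So the only relevant differential is $d_1:\operatorname{HH}^{1,0}\to\operatorname{HH}^{3,-1}$. It sends a derivation class $D$ to $[m_3,D]$.

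\textbf{The key step, and the main obstacle.} We must show this $d_1$ is surjective, so that it kills the $m_3$ class and one of the two degree-one classes. The table requires exactly this: $\operatorname{HH}^1$ drops from $2$ to $1$ and no total-degree-$2$ class from $\operatorname{HH}^{3,-1}$ survives.

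The two $\operatorname{HH}^{1,0}$ classes are Euler-type derivations. One scales arrows by path length, or by the grading on one boundary; the other weights arrows around the second boundary component. For an Euler derivation $E_w$ of weight $w$,
$$[m_3,E_w]=(w(\text{output})-\textstyle\sum w(\text{inputs}))\,m_3.$$
So I would compute the weight defect of $m_3$ on its defining inputs, the Massey product from Corollary~\ref{Massey product}, for each derivation. For the natural path-length grading, $m_3$ of arity three has weight defect $\pm1$, which gives $d_1\ne0$. Hence $[m_3]$ is killed, a suitable combination of the two derivations survives, and $\operatorname{HH}^1=1$.

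The delicate part is checking that the representative of $[m_3]$ in the reduction-system complex matches the class computed this way, and that the chosen derivations are not coboundaries modulo this identification.

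All remaining classes sit in bidegrees whose $d_1$ source and target vanish, and by degree reasons all higher $d_r$ also vanish, since $d_r$ has bidegree $(r+1,-r)$. They therefore survive:
\[
\operatorname{HH}^{0,0}\mapsto\operatorname{HH}^{0},\qquad
\operatorname{HH}^{1,-m}\mapsto\operatorname{HH}^{1-m},\qquad
\operatorname{HH}^{2,m+1}\mapsto\operatorname{HH}^{m+3}.
\]
Finally, because the filtration is bounded in each total degree, the associated graded determines the dimensions, and reading these off reproduces the last three rows of the table.
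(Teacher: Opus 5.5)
Your overall plan is the paper's: formality and the bigraded table for $n_1=1$, and for $n_1\ge 2$ the internal-degree spectral sequence with $E_1=\operatorname{HH}^{*,*}(\mathcal A_{n_1,n_2},m_2)$, $d_1=[m_3,-]$, and a single relevant differential $\operatorname{HH}^{1,0}\to\operatorname{HH}^{3,-1}$.

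\textbf{The key step fails as written.} The path-length Euler derivation $E=\sum_{\alpha\in Q_1}(\alpha\parallel\alpha)$ has weight defect $0$ on $m_3$, not $\pm1$. The inputs $p\,u_{1,n_1}$, $x_{n_1,n_1+1}$, $y_{n_1,n_1+2}\,q$ and the output $p\,x_{1,n_1}x_{1,n_1+1}u_{1,n_1+2}\,q$ both have total length $|p|+|q|+3$. In fact $E$ represents the class that \emph{survives}. Take $f(L_{i,j})=i+j$. Every $x$- and $y$-arrow raises $f$ by $1$ and every $u_{1,k}$ raises it by $n_1$, so
$\partial^0\bigl(\sum_v f(v)(e_v\parallel e_v)\bigr)=\sum(\alpha\parallel\alpha)+n_1(\theta_1+\theta_2)$, where the first sum runs over the $x$- and $y$-arrows. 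Here $\theta_1=\sum_{i=2}^{n_1}(u_{1,i}\parallel u_{1,i})$ and $\theta_2=\sum_{i=2}^{n_2}(u_{1,n_1+i}\parallel u_{1,n_1+i})$. Hence $E$ is cohomologous to $(1-n_1)(\theta_1+\theta_2)$, which lies in $\ker d_1$. With your chosen derivation you get $d_1=0$, and the argument does not kill $[m_3]$.

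\textbf{How to repair it.} You need a weight that separates the two families of long arrows.
\begin{itemize}
\item $\theta_1$ has defect $-1$: one $u_{1,n_1}$ enters and none leaves.
\item $\theta_2$ has defect $+1$.
\end{itemize}
This is the paper's computation $d_1(\theta_1)=(-1)^m\omega=-d_1(\theta_2)$, where $\omega$ is the generator of $\operatorname{HH}^{3,-1}$. Equivalently, use the internal-grading Euler derivation $a\mapsto|a|\,a$, which equals $(m+1)\theta_1+m\theta_2$. Its defect is $m-(m+1)=-1=|m_3|$, an instance of $[m_k,\mathrm{eu}]=\pm(2-k)m_k$; this is probably what your ``arity three'' remark was reaching for.

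You must also know that $[m_3]\neq0$ in $\operatorname{HH}^{3,-1}$, i.e.\ that its reduction-system representative $\pm\omega$ is not a $\partial^2$-coboundary. This is the statement $N_2\not\subseteq\operatorname{Im}\partial^2$ from the $\operatorname{HH}^3$ computation, or equivalently the Massey product of Corollary~\ref{Massey product}. Only then does $d_1$ have rank one.

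\textbf{Minor point.} Your blanket vanishings fail at special values of $m$:
\begin{itemize}
\item $\operatorname{HH}^{2,-1}\neq 0$ when $m=-2$ and $n_1=2$;
\item $\operatorname{HH}^{3,-m-1}\neq 0$ when $m=0$.
\end{itemize}
The dimension count still holds. The unit has zero bracket with the strictly unital $m_3$, and for $m=0$ the target $\operatorname{HH}^{3,-1}$ is still one-dimensional, so the rank of $d_1$ remains one. These coincident-degree cases should nonetheless be addressed explicitly.
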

	
	This allows us to construct a family of dg deformations of $\widetilde{\mathcal{A}}_{n_1,n_2}$, see Subsection \ref{deformed-alg}.  A natural question, inspired by Seidel's principle \cite{Seidel2002}, is to understand the precise geometric meaning of these explicit deformations in terms of partial compactifications of the symmetric products; we will explore this question in future work.
	

	\begin{notation}
		Throughout this paper, we fix an algebraically closed field $\Bbbk$ of characteristic zero. Unless otherwise specified, all unadorned tensor products $\otimes$ are taken over $\Bbbk$, and we adopt the cohomological grading convention for \(A_\infty\)-algebras. Thus, for an \(A_\infty\)-algebra \(A\), the structure map $m_r\colon A^{\otimes r}\longrightarrow A$
		is of degree \(2-r\). Similarly, for an \(A_\infty\)-morphism \(F=(F_r)_{r\geq 1}\), the component \(F_r\) is of degree \(1-r\).
	\end{notation}

	\section{Preliminaries}
	\subsection{Partially wrapped Fukaya categories of symmetric products of Riemann surfaces}
	Let $\Sigma$ be a Riemann surface with nonempty boundary and let $\Lambda$ be a finite set of points (called {\it stops}) on $\partial \Sigma$, such that each boundary component contains at least one stop. For an integer $k\geq 1$, we consider the smooth $k$-dimensional complex algebraic variety
	$$\Sym^k (\Sigma):= \underbrace{\Sigma \times \dots \times \Sigma}_{k\ \text{times}}/\mathfrak{S}_k ,$$ 
	where $\mathfrak{S}_k$ denotes the permutation group acting by permuting the factors of the product. Then we equip $\Sym^k (\Sigma)$ with stops $\Lambda^{(k)}$ defined by
	\[
	\Lambda^{(k)} = \{ [x_1, \dots, x_k] \in \Sym^k(\Sigma) \mid x_i \in \Lambda \text{ for some } i \}.
	\]
	
	The \textit{partially wrapped Fukaya category} $\mathcal{W}(\Sym^k(\Sigma), \Lambda^{(k)})$ was introduced by Auroux in \cite{Auroux2010a, Auroux2010b}. He also proved the following: given a set of Lagrangians $L_1$, $L_2$, $\dots$, $L_n$ on $(\Sigma, \Lambda)$ such that $\Sigma \backslash \{L_1, \dots, L_n\}$ is a disjoint union of disks each containing exactly one stop on its boundary (i.e., a full formal arc system on $(\Sigma, \Lambda)$ in the sense of \cite{Haiden-Katzarkov-Kontsevich2017} and also an admissible dissection in \cite{Amiot-Plamondon-Schroll2023}), then for $1\leq k \leq n$, the category $\mathcal{W}(\Sym^k(\Sigma), \Lambda^{(k)})$ is generated by $\binom{n}{k}$ Lagrangian submanifolds $L_{i_1}\times \cdots \times L_{i_k}$, where $\{i_1, i_2,\dots, i_k\}$ ranges over all $k$-element subsets of $[n] = \{1, 2, \dots, n\}.$ Given two such Lagrangian submanifolds $L_{i_1}\times \cdots \times L_{i_k}$ and $L_{j_1}\times \cdots \times L_{j_k}$, the morphism space  between them is defined as the following vector space (with no higher products)
	\begin{align*}
		&\Hom_{\mathcal{W}(\Sym^k(\Sigma), \Lambda^{(k)})}(L_{i_1}\times \cdots \times L_{i_k},L_{j_1}\times \cdots \times L_{j_k})\\
		=&\bigoplus_{\sigma \in \mathfrak{S}_k} \Hom_{\mathcal{W}(\Sigma, \Lambda)}(L_{i_1},L_{j_{\sigma(1)}})\otimes \Hom_{\mathcal{W}(\Sigma, \Lambda)}(L_{i_2},L_{j_{\sigma(2)}})\otimes \cdots \otimes \Hom_{\mathcal{W}(\Sigma, \Lambda)}(L_{i_k},L_{j_{\sigma(k)}}).
	\end{align*}
	
	
	Following \cite{Lipshitz-Ozsvath-Thurston2018}, we describe compositions of morphisms using strand diagrams, where morphisms are represented graphically and composition is induced by concatenation of diagrams. To obtain such a diagram from a morphism $f_1\otimes f_2\otimes \cdots \otimes f_k$ in $$\Hom_{\mathcal{W}(\Sigma, \Lambda)}(L_{i_1},L_{j_{\sigma(1)}})\otimes \Hom_{\mathcal{W}(\Sigma, \Lambda)}(L_{i_2},L_{j_{\sigma(2)}})\otimes \cdots \otimes \Hom_{\mathcal{W}(\Sigma, \Lambda)}(L_{i_k},L_{j_{\sigma(k)}}),$$ we first group the factors according to the boundary component in which they are situated: 
	$$f_{i_1,1}\otimes \cdots \otimes f_{i_{a_1},1}\otimes f_{i_1,2}\otimes \cdots \otimes f_{i_{a_{2}},2}\otimes \cdots\otimes  f_{i_{1},l}\otimes \cdots \otimes f_{i_{a_l},l},$$ 
	where $f_{i_1,r}, f_{i_2,r}\dots f_{i_{a_r},r}$ are Reeb chords (i.e., boundary segments) along the same  boundary component $\partial \Sigma_{r}$. Here the boundary components $\partial \Sigma_{1}, \dots, \partial \Sigma_{\ell}$ are distinct.  For each $1\leq r \leq l$,  one then constructs a strand diagram associated to  $f_{i_1,r}, f_{i_2,r}, \dots,  f_{i_{a_r},r}$ on the $r$-th boundary as follows:
	\begin{itemize}[leftmargin=*, label=--]
		\item  Choose a basepoint (a stop) $S$ on $\partial \Sigma_{r}$. 
		The complement $\partial \Sigma_{r} \setminus \{S\}$ is a $1$-dimensional manifold, and 
		the orientation of the boundary induces a linear order on it (with $S$ as the cut point).
		Denote by $M$ the set of all endpoints of the Lagrangians $\{L_1, \dots, L_n\}$ lying on 
		$\partial \Sigma_r \setminus \{S\}$; thus $M$ inherits this linear order.
		The endpoints of the Reeb chords are divided into two subsets, 
		the starting points  $P^{-}=\{f_{i_1,r}^{-}, f_{i_2,r}^{-}, \dots,  f_{i_{a_r},r}^{-}\}$ 
		and the ending points $P^{+}=\{f_{i_1,r}^{+}, f_{i_2,r}^{+}, \dots,  f_{i_{a_r},r}^{+}\}$.
		Note that no two Reeb chords share a common starting point or a common ending point, 
		and the two sets $P^{+}$ and $P^{-}$ have the same cardinality. 
		Both $P^{-}$ and $P^{+}$ are contained in $M$.
		\item Then the collection of Reeb chords $f_{i_1,r}, f_{i_2,r}, \dots,  f_{i_{a_r},r}$ can be represented inside the rectangle $M\times [0,1]$ by drawing upward-veering strands from $M\times \{0\}$ to $M\times \{1\}$, while a strand stays horizontal if it is an idempotent. These strands define a bijection from $P^{-}$ to $P^{+}$ sending each starting point $f_{i_s,r}^{-}$ to its corresponding endpoint $f_{i_s,r}^{+}$. In other words, the corresponding strand diagram corresponds to a partial permutation of the set $M$, which satisfies that each element is mapped to an element that is not earlier than itself 
		in the given linear order on the boundary.
	\end{itemize}
	
	Therefore, the diagram corresponding to the morphism $f_1 \otimes f_2 \otimes \dots \otimes f_k$ consists of (several) strand diagrams constructed as above. 
	
	Compositions of morphisms in $\mathcal{W}(\Sym^k(\Sigma), \Lambda^{(k)})$ are induced by those of the underlying Reeb chords in $\mathcal{W}(\Sigma, \Lambda)$:
	$$(g_1\otimes g_2\otimes \cdots \otimes g_k)\circ(f_1\otimes f_2\otimes \cdots \otimes f_k)=g_{1}f_{\sigma(1)}\otimes g_{2}f_{\sigma(2)}\otimes \cdots \otimes g_{k}f_{\sigma(k)}$$
	provided there exists a permutation $\sigma \in \mathfrak{S}_k $ such that each $g_{i}f_{\sigma(i)}$ is nonzero in $\mathcal{W}(\Sigma, \Lambda)$. An essential vanishing condition also holds: if the composition of the corresponding strand diagrams produces any pair of strands that cross each other more than once, then the resulting morphism is zero.
	
	Finally, the differential on the dg endomorphism algebra is defined as the sum of the resulting resolved diagrams over all possible ways of resolving a single crossing in the strand diagram.

	Let us explain the above construction via the following explicit example. 
	\begin{example} \label{example:strand}
		Let $\Sigma$ be an annulus with 6 stops (red points on the boundary) as shown in Figure \ref{fig:annulus-2-4}. 
		\begin{figure}[htbp]
			\centering
			\begin{tikzpicture}[scale=0.7]
				
				\tikzset{
					with arrows/.style={
						decoration={ 
							markings,
							mark=at position #1 with {\arrow{>}}
						}, 
						postaction={decorate}
					}, 
					with arrows/.default=0.5,
					vertex/.style = {
						circle, draw, fill,  
						minimum size = 2pt, 
						inner sep=1pt
					}
				}
				
				\def \radius {1.5cm}
				\def \numSectors {10}
				
				\foreach \s in {1,...,\numSectors} {
					\draw[thick] ([shift=({360/\numSectors*(\s)}:\radius-0.6cm)]0,0) 
					arc ({360/\numSectors*(\s)}:{360/\numSectors*(\s+1)}:\radius-0.6cm);
					
					
					\draw[thick] ([shift=({360/\numSectors*(\s)}:\radius+1.5cm)]0,0) 
					arc ({360/\numSectors*(\s)}:{360/\numSectors*(\s+1)}:\radius+1.5cm);
				}
				
				\draw[with arrows=0] ([shift=({360/\numSectors*(7)}:\radius-0.6cm)]0,0) 
				arc ({360/\numSectors*(7)}:{360/\numSectors*(5)}:\radius-0.6cm);
				\draw[with arrows=0.3]({360/\numSectors*(3)}:\radius+1.5cm) 
				arc ({360/\numSectors*(3)}:{360/\numSectors*(4)}:\radius+1.5cm);
				
				\draw [blue, thick] ([shift=({360/\numSectors*(8)}:\radius-0.9cm)]0.6,0.1) 
				-- ([shift=({360/\numSectors*(9)}:\radius+1.5cm)]0,0);  
				\draw [blue, thick] ([shift=({360/\numSectors*(10)}:\radius-1.0cm)]0.4,0) 
				-- ([shift=({360/\numSectors*(10)}:\radius+1.5cm)]0,0);  
				\draw [blue, thick] ([shift=({360/\numSectors*(12)}:\radius-0.9cm)]0.5,0) 
				-- ([shift=({360/\numSectors*(11)}:\radius+1.5cm)]0,0);  
				
				\node[blue] at (0,1.7) {\small $L_{1}$};
				\node[blue] at (0,-1.7) {\small $L_{2}$};
				\node[blue] at (2,0.3) {\small $L_{4}$};
				\node[blue] at (1.8,-0.8) {\small $L_{3}$};
				\node[blue] at (1.35,1.5) {\small $L_{5}$};
				\node[blue] at (-1.7,0) {\small $L_{6}$};
				\node[red] at (0,3.4) {\small $S$};

				\node[vertex,red] at ([shift=({360/\numSectors*(7.5)}:\radius+1.5cm)]0,0) {};
				\node[vertex,red] at ([shift=({360/\numSectors*(2.5)}:\radius+1.5cm)]0,0) {};
				\node[vertex,red] at ([shift=({360/\numSectors*(2.5)}:\radius+1.5cm)]-0.9,-3) {};
				\node[vertex,red] at ([shift=({360/\numSectors*(2.5)}:\radius+1.5cm)]0,-2.1) {};
				\node[vertex,red] at ([shift=({360/\numSectors*(2.5)}:\radius+1.5cm)]0.85,-2.7) {};
				\node[vertex,red] at ([shift=({360/\numSectors*(2.5)}:\radius+1.5cm)]0.88,-3.25) {};
				
				\node at (-3.4,0){\tiny $\alpha_1$};
				\node at (2.05,-2.7){\tiny $\alpha_2$};
				\node at (3.2,-1.2){\tiny $\alpha_3$};
				\node at (3.2,1.2){\tiny $\alpha_4$};
				\node at (2,2.7){\tiny $\alpha_5$};
				\node at (0.2,-1.1){\tiny $\beta$};
				\node at (-1,3.1){\tiny $1$};
				\node at (-1,-3.1){\tiny $2$};
				\node at (1,-3.1){\tiny $3$};
				\node at (2.6,-1.9){\tiny $4$};
				\node at (3.2,0){\tiny $5$};
				\node at (2.6,1.9){\tiny $6$};
				\node at (1,3.1){\tiny $7$};
				
				\draw[blue, thick] (-1, 2.8) to[in=180,out=270] (0,2);
				\draw[blue, thick] (0, 2) to[in=270,out=0] (1,2.8);
				\draw[blue, thick] (-1, -2.8) to[in=180,out=90] (0,-2);
				\draw[blue, thick] (0, -2) to[in=90,out=0] (1,-2.8);
				\draw[blue, thick] (-0.5,0.75) to[in=90,out=155] (-1.4,0);
				\draw[blue, thick] (-0.5,-0.75) to[in=270,out=205] (-1.4,0);

			\end{tikzpicture}
			\caption{Annulus of type $\widetilde{\mathbb A}_{2,4}$.}
			\label{fig:annulus-2-4}
		\end{figure}
		
		We can see that the category $\mathcal{W}(\Sym^3(\Sigma), \Lambda^{(3)})$ is generated by the $\binom{6}{3}$ objects $L_{i}\times L_{j}\times L_{k}$ with $1\leq i < j<k\leq 6$. To keep notation light, we omit the subscripts of $\Hom$. The six Lagrangians $L_{1},\dots ,L_{6}$ have in total seven endpoints (labeled as $1, 2, \dots, 7$) lying on the outer boundary; choosing the stop $S$ as a basepoint equips these endpoints with a linear order (via the boundary orientation), and they become the seven vertices of the strand diagrams in Figures~\ref{fig:left_diagram} and~\ref{fig:right_diagram}. Consider the two morphisms  
		\begin{align*}
			\alpha_2 \alpha_3 \alpha_4 \otimes \Id_{L_3} \otimes \Id_{L_4} & \in \Hom(L_{2}\times L_{3}\times L_{4},L_{3}\times L_{4}\times L_{5})\\ \alpha_5\otimes \Id_{L_4}\otimes \alpha_3 \alpha_4 & \in \Hom(L_{3}\times L_{4}\times L_{5},L_{1}\times L_{4}\times L_{5}).
		\end{align*} 
		The strand diagrams of these two morphisms are displayed in Figure~\ref{fig:left_diagram} and  \ref{fig:right_diagram}, respectively. In Figure~\ref{fig:left_diagram} the orange line corresponds to the boundary segment  $\alpha_2\alpha_3\alpha_4$ (with endpoints $3$ and $6$), the red line to $\Id_{L_3}$, and the green line to $\Id_{L_4}$.
		
		\begin{figure}[htbp]
			\centering 
			\begin{minipage}[b]{0.48\textwidth}
				\centering 
				\begin{tikzpicture}[scale=0.8]
					\tikzset{
						with arrows/.style={
							decoration={ 
								markings,
								mark=at position 0.6 with {\arrow{>}}
							}, 
							postaction={decorate}
						},
						vertex/.style = {
							circle, draw, fill=black,  
							minimum size = 4pt, 
							inner sep=1pt
						}
					}
					
					\foreach \i in {1,2,3,4,5,6,7} {
						\node[vertex] (left\i) at (0,\i) {};
						\node[left] at (left\i.west) {$\i$};
					}
					
					\foreach \i in {1,2,3,4,5,6,7} {
						\node[vertex] (right\i) at (4,\i) {};
						\node[right] at (right\i.east) {$\i$};
					}
					
					\draw[thick, orange] (left3) to[out=0, in=180] (right6);
					
					\draw[thick, red] (left4) to[out=0, in=180] (right4);
					
					\draw[thick, green] (left5) to[out=0, in=180] (right5);
				\end{tikzpicture}
				\caption{Strand diagram of $\alpha_2 \alpha_3 \alpha_4 \otimes \Id_{L_3} \otimes \Id_{L_4}$.}
				\label{fig:left_diagram}
			\end{minipage}
			\hfill
			\begin{minipage}[b]{0.48\textwidth}
				\centering
				\begin{tikzpicture}[scale=0.8]
					\tikzset{
						with arrows/.style={
							decoration={ 
								markings,
								mark=at position 0.6 with {\arrow{>}}
							}, 
							postaction={decorate}
						},
						vertex/.style = {
							circle, draw, fill=black,  
							minimum size = 4pt, 
							inner sep=1pt
						}
					}
					
					\foreach \i in {1,2,3,4,5,6,7} {
						\node[vertex] (left\i) at (0,\i) {};
						\node[left] at (left\i.west) {$\i$};
					}
					
					\foreach \i in {1,2,3,4,5,6,7} {
						\node[vertex] (right\i) at (4,\i) {};
						\node[right] at (right\i.east) {$\i$};
					}
					
					\draw[thick, red] (left4) to[out=0, in=180] (right6);
					
					\draw[thick, orange] (left6) to[out=0, in=180] (right7);
					
					\draw[thick, green] (left5) to[out=0, in=180] (right5);
				\end{tikzpicture}
				\caption{Strand diagram of $\alpha_5\otimes \Id_{L_4}\otimes \alpha_3 \alpha_4$.}
				\label{fig:right_diagram}
			\end{minipage}
		\end{figure}
		Now we can see the composition of $\alpha_2 \alpha_3 \alpha_4 \otimes \Id_{L_3} \otimes \Id_{L_4}$ and $\alpha_5\otimes \Id_{L_4}\otimes \alpha_3 \alpha_4$. At the level of strand diagrams, this algebraic composition corresponds to the concatenation and subsequent reduction of the diagrams shown in Figure~\ref{fig:composition_visualization}.  As a result, the composition is  $\alpha_2 \alpha_3 \alpha_4\alpha_5 \otimes \Id_{L_4} \otimes \alpha_3\alpha_4$.
		\begin{figure}[htbp]
			\centering
			\hspace*{-0.5em}
			\begin{minipage}[c]{0.30\textwidth}
				\centering
				\begin{tikzpicture}[scale=1]
					\tikzset{
						with arrows/.style={decoration={markings, mark=at position 0.6 with {\arrow{>}}}, postaction={decorate}},
						vertex/.style={circle, draw, fill=black, minimum size=3pt, inner sep=1pt}
					}
					\foreach \y in {1,...,7} {
						\node[vertex] (gL\y) at (0, \y) {};
						\node[vertex] (gR\y) at (2.5, \y) {};
					}
					\draw[thick, orange] (gL3) to[out=0, in=180] (gR6);
					\draw[thick, red] (gL4) to[out=0, in=180] (gR4);
					\draw[thick, green] (gL5) to[out=0, in=180] (gR5);
				\end{tikzpicture}
				\caption*{\makebox[\linewidth][c]{$\alpha_2 \alpha_3 \alpha_4 \otimes \Id_{L_3} \otimes \Id_{L_4}$}}
				
				\label{fig:map_g2}
			\end{minipage}
			\hspace{-0.2em}
			\raisebox{0.5cm}{$\circ$}
			\hspace{-0.2em}
			\begin{minipage}[c]{0.30\textwidth}
				\centering
				\begin{tikzpicture}[scale=1]
					\tikzset{
						with arrows/.style={decoration={markings, mark=at position 0.6 with {\arrow{>}}}, postaction={decorate}},
						vertex/.style={circle, draw, fill=black, minimum size=3pt, inner sep=1pt}
					}
					\foreach \y in {1,...,7} {
						\node[vertex] (fL\y) at (0, \y) {};
						\node[vertex] (fR\y) at (2.5, \y) {};
					}
					\draw[thick, red] (fL4) to[out=0, in=180] (fR6);
					\draw[thick, orange] (fL6) to[out=0, in=180] (fR7);
					\draw[thick, green] (fL5) to[out=0, in=180] (fR5);
				\end{tikzpicture}
				\caption*{\makebox[\linewidth][c]{$\alpha_5\otimes \Id_{L_4}\otimes \alpha_3 \alpha_4$}}
				
				\label{fig:map_f2}
			\end{minipage}
			\hspace{-0.2em}
			\raisebox{0.5cm}{$=$}
			\hspace{-0.2em}
			\begin{minipage}[c]{0.30\textwidth}
				\centering
				\begin{tikzpicture}[scale=1, baseline=(current bounding box.center)]
					\tikzset{
						with arrows/.style={decoration={markings, mark=at position 0.6 with {\arrow{>}}}, postaction={decorate}},
						vertex/.style={circle, draw, fill=black, minimum size=3pt, inner sep=0.5pt}
					}
					\foreach \y in {1,...,7} {
						\node[vertex] (cL\y) at (0, \y) {};
						\node[vertex] (cR\y) at (2.5, \y) {};
					}
					
					\draw[thick, orange] (cL3) to[out=5, in=180] (cR7);
					
					\draw[thick, red] (cL4) to[out=0, in=180] (1.25,4);
					\draw[thick, red] (1.25,4) to[out=0, in=175] (cR6);
					
					\draw[thick, green] (cL5) to (1.25,5);
					\draw[thick, green] (1.25,5) to (cR5);
				\end{tikzpicture}
				\caption*{\makebox[\linewidth][c]{$\alpha_2 \alpha_3 \alpha_4\alpha_5 \otimes \Id_{L_4} \otimes \alpha_3\alpha_4$}}
				\label{fig:composition_fg}
			\end{minipage}
			\hspace*{-0.5em}
			\caption{Composition of $\alpha_2 \alpha_3 \alpha_4 \otimes \Id_{L_3} \otimes \Id_{L_4}$ and $\alpha_5\otimes \Id_{L_4}\otimes \alpha_3 \alpha_4$.}
			\label{fig:composition_visualization}
		\end{figure}
		
		From Figure \ref{fig: differential}, we obtain that  the differential of $\alpha_2 \alpha_3 \alpha_4\alpha_5 \otimes \Id_{L_4} \otimes \alpha_3\alpha_4$ equals $$\alpha_3\alpha_4\alpha_5 \otimes \Id_{L_4}\otimes \alpha_2\alpha_3\alpha_4+\alpha_2\alpha_3\alpha_4\alpha_5\otimes \alpha_4 \otimes \alpha_3.$$ 
		Note that the last summand in Figure \ref{fig: differential} vanishes since there are two strands intersecting twice.
		\begin{figure}[htbp]
			\centering
			\hspace*{-0.5em}
			\begin{minipage}[c]{0.2\textwidth}
				\centering
				\hspace{-0.3em}
				\raisebox{-0.1cm}{$\partial$}
				\hspace{-0.3em}
				\begin{tikzpicture}[scale=0.7, baseline=(current bounding box.center)]
					\tikzset{
						with arrows/.style={decoration={markings, mark=at position 0.6 with {\arrow{>}}}, postaction={decorate}},
						vertex/.style={circle, draw, fill=black, minimum size=3pt, inner sep=0.5pt}
					}
					\foreach \y in {1,...,7} {
						\node[vertex] (cL\y) at (0, \y) {};
						\node[vertex] (cR\y) at (2.5, \y) {};
					}
					
					\draw[thick, orange] (cL3) to[out=5, in=180] (cR7);
					
					\draw[thick, red] (cL4) to[out=0, in=180] (1.25,4);
					\draw[thick, red] (1.25,4) to[out=0, in=175] (cR6);
					
					\draw[thick, green] (cL5) to (1.25,5);
					\draw[thick, green] (1.25,5) to (cR5);
				\end{tikzpicture}
			\end{minipage}
			\hspace{-0.3em}
			\raisebox{0cm}{$=$}
			\hspace{-0.3em}
			\begin{minipage}[c]{0.2\textwidth}
				\centering
				\begin{tikzpicture}[scale=0.7]
					\tikzset{
						with arrows/.style={decoration={markings, mark=at position 0.6 with {\arrow{>}}}, postaction={decorate}},
						vertex/.style={circle, draw, fill=black, minimum size=3pt, inner sep=1pt}
					}
					\foreach \y in {1,...,7} {
						\node[vertex] (fL\y) at (0, \y) {};
						\node[vertex] (fR\y) at (2.5, \y) {};
					}
					\draw[thick, orange] (fL4) to[out=30, in=210] (fR7);
					\draw[thick, red] (fL3) to[out=30, in=210] (fR6);
					\draw[thick, green] (fL5) to[out=0, in=180] (fR5);
				\end{tikzpicture}
			\end{minipage}
			\hspace{-0.3em}
			\raisebox{0cm}{$+$}
			\hspace{-0.3em}
			\begin{minipage}[c]{0.2\textwidth}
				\centering
				\begin{tikzpicture}[scale=0.7]
					\tikzset{
						with arrows/.style={decoration={markings, mark=at position 0.6 with {\arrow{>}}}, postaction={decorate}},
						vertex/.style={circle, draw, fill=black, minimum size=3pt, inner sep=1pt}
					}
					\foreach \y in {1,...,7} {
						\node[vertex] (fL\y) at (0, \y) {};
						\node[vertex] (fR\y) at (2.5, \y) {};
					}
					\draw[thick, orange] (fL3) to[out=0, in=180] (fR7);
					\draw[thick, red] (fL5) to[out=0, in=180] (fR6);
					\draw[thick, green] (fL4) to[out=0, in=180] (fR5);
				\end{tikzpicture}
			\end{minipage}
			\hspace{-0.3em}
			\raisebox{0cm}{$+$}
			\hspace{-0.3em}
			\begin{minipage}[c]{0.2\textwidth}
				\centering
				\hspace{-0.3em}
				\raisebox{-0.1cm}{$$}
				\hspace{-0.3em}
				\begin{tikzpicture}[scale=0.7, baseline=(current bounding box.center)]
					\tikzset{
						with arrows/.style={decoration={markings, mark=at position 0.6 with {\arrow{>}}}, postaction={decorate}},
						vertex/.style={circle, draw, fill=black, minimum size=3pt, inner sep=0.5pt}
					}
					\foreach \y in {1,...,7} {
						\node[vertex] (cL\y) at (0, \y) {};
						\node[vertex] (cR\y) at (2.5, \y) {};
					}
					
					\draw[thick, orange] (cL5) to[out=5, in=180] (cR7);
					
					\draw[thick, green] (cL3) to[out=0, in=180] (1.25,5);
					\draw[thick, green] (1.25,5) to[out=0, in=175] (cR5);
					
					\draw[thick, red] (cL4) to[out=0, in=180] (1.25,4);
					\draw[thick, red] (1.25,4) to[out=0, in=180] (cR6);
				\end{tikzpicture}
			\end{minipage}
			\hspace*{-0.5em}
			\caption{Differential of $\alpha_2 \alpha_3 \alpha_4 \alpha_5 \otimes \Id_{L_4} \otimes \alpha_3\alpha_4$, where the last term vanishes since  two strands intersect twice.}
			\label{fig: differential}
		\end{figure}
		
	\end{example}
	
	\subsection{Hochschild cohomology of quivers with relations} \label{subsection:reductionsystem} 
	In this subsection, let us recall a small projective bimodule resolution constructed from a reduction system, which can be used to compute Hochschild cohomology.  We refer to \cite{Chouhy-Solotar2015, Barmeier-Wang2020} for further details. 
	
	A \textit{quiver} $Q=(Q_0, Q_1, s, t)$ is a quadruple consisting of two finite sets $Q_0$ (the \textit{vertices}) and $Q_1$ (the \textit{arrows}), together with two maps $s: Q_1 \to Q_0$ (the \textit{source}) and $t: Q_1 \to Q_0$ (the \textit{target}). 
	
	A \textit{path} of length $n \ge 1$ is a sequence of arrows $p = \alpha_1 \dots \alpha_n$ written from left to right, meaning that $t(\alpha_i) = s(\alpha_{i+1})$ for $1 \le i \le n-1$. Each vertex $i \in Q_0$ is regarded as a \textit{trivial path} $e_i$ of length $0$, with $s(e_i) = t(e_i) = i$. For any path $p$, its source $s(p)$ and target $t(p)$ are defined in the natural way.
	
	For each integer $n \ge 0$, let $\Bbbk Q_n$ denote the vector space spanned by all paths of length $n$. The \textit{path algebra} $\Bbbk Q = \bigoplus\limits_{n \ge 0} \Bbbk Q_n$ is the direct sum of these spaces, with multiplication given by concatenation: for any two paths $p$ and $q$, their product $pq$ is the concatenated path if $t(p) = s(q)$, and is $0$ otherwise. This multiplication makes $\Bbbk Q$ an associative algebra where the trivial paths act as orthogonal idempotents, satisfying $e_i e_j = \delta_{i,j} e_i$ and $e_{s(p)} p = p = p e_{t(p)}$.

	Two paths are called \textit{parallel} if they have the same source and the same target. This notion can be extended to an element $\rho=\sum\limits_{k} \lambda_k p_k \in \Bbbk Q_n$ by saying that a path $q$ is parallel to $\rho$ if $q$ is parallel to every path $p_k$ for which the coefficient $\lambda_k$ is nonzero. By convention, any path is considered to be parallel to 0.

	\begin{definition}
		[{\cite[§1]{Bergman1978}, \cite[Definition 3.1]{Barmeier-Wang2020}}]\label{def reduction system}
		A \textit{reduction system} $R$ for the path algebra $\Bbbk Q$ is a set of pairs
		\[
		R = \{ (s, \varphi(s)) \mid s \in S \text{ and } \varphi(s) \in \Bbbk Q \},
		\]
		where
		\begin{itemize}[label=--]
			\item $S$ is a subset of $Q_{\geq 2}$ whose elements are mutually non-subpaths; that is, for any distinct $s, s' \in S$, $s$ is not a subpath of $s'$.
			\item For each $s \in S$, the path $s$ and the element $\varphi(s)$ are parallel and moreover $\varphi(s)$ is \textit{irreducible}; i.e., it is a $\Bbbk$-linear combination of irreducible paths.
		\end{itemize}
		Here, a path is called \textit{irreducible} if it contains no element of $S$ as a subpath. The set of all irreducible paths is denoted by $\Irr_S$.
	\end{definition}
	
	\begin{definition}
		Given a two-sided ideal $I$ of $\Bbbk Q$, a reduction system $R$ is said to \textit{satisfy the Diamond condition for $I$} if  the two-sided ideal $I$ is generated by the set $$\{ s - \varphi(s) \mid (s, \varphi(s)) \in R \}$$ and  every path is \textit{reduction-unique} (see \cite[Definition 3.2]{Barmeier-Wang2020}).
	\end{definition}
	
	\begin{definition}
		For a path $p = qr$, the path $q$ is a \textit{proper left subpath} of $p$ if $q \neq p$.
		
		For $n \geq 0$, a path $p \in Q_\bullet$ is an \textit{$n$-ambiguity}\index{ambiguity!higher!$n$-|textbf} if there exists an arrow $u_0 \in Q_1$ and irreducible paths $u_1, \dotsc, u_{n+1}$ such that $p = u_0 \dotsb u_{n+1}$ and, for all $i$
		\begin{enumerate}
			\item $u_i u_{i+1}$ is reducible;
			\item $u_i d$ is irreducible for every proper left subpath $d$ of $u_{i+1}$.
		\end{enumerate}
	\end{definition}
	
	\begin{theorem}[{\cite[Section 4]{Chouhy-Solotar2015}, \cite[Theorem 4.2]{Barmeier-Wang2020}}]
		\label{theorem:projective}
		\index{resolution!Chouhy--Solotar|textbf}
		Let $A = \Bbbk Q / I$, let $R$ be a reduction system satisfying the Diamond condition for $I$, set $S_m = Q_m$ for $m = 0, 1$, and let $S_{n+2}$ be the set of $n$-ambiguities\index{ambiguity!higher!$n$-} for $n \geq 0$. Then there is a projective $A$-bimodule resolution $P_{\bullet}$ of $A$:
		\[
		\cdots \longrightarrow P_n \xrightarrow{\partial_n} P_{n-1} \longrightarrow \cdots \longrightarrow P_1 \xrightarrow{\partial_1} P_0,
		\]
		where $P_n = A \otimes_{\Bbbk Q_0} \Bbbk S_n \otimes_{\Bbbk Q_0} A$ and the augmentation $\partial_0: P_0 \to A$ is given by the multiplication of $A$.
	\end{theorem}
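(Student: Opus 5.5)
We follow the standard Chouhy--Solotar / Bardzell-type argument: build the complex explicitly, check that it is a complex, and then prove exactness with a contracting homotopy defined through the reduction system. First we fix the terms. We set $P_n = A\otimes_{\Bbbk Q_0}\Bbbk S_n\otimes_{\Bbbk Q_0}A$, with $S_0=Q_0$, $S_1=Q_1$, $S_2=S$ (the $0$-ambiguities $u_0u_1$ are exactly the elements of $S$), and $S_{n+2}$ the $n$-ambiguities. Each $P_n$ is a direct sum of modules $Ae_{s(p)}\otimes e_{t(p)}A$, which are projective bimodules. The low differentials are the usual ones:
\begin{itemize}
\item $\partial_1(1\otimes\alpha\otimes1)=\alpha\otimes1-1\otimes\alpha$;
\item $\partial_2(1\otimes s\otimes 1)=\sum 1\otimes\cdots$, obtained by applying the derivation $\Bbbk Q\to P_1$, $a_1\cdots a_k\mapsto\sum_i a_1\cdots a_{i-1}\otimes a_i\otimes a_{i+1}\cdots a_k$, to the relation $s-\varphi(s)$.
\end{itemize}
Exactness at $P_0$ and $P_1$ is then the classical statement that $\ker(\mu)$ is generated by the elements $\alpha\otimes1-1\otimes\alpha$, and that the kernel of $\partial_1$ is generated by the images of the relations. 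Both follow because $I$ is generated by $\{s-\varphi(s)\}$.

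For higher degrees we would not write $\partial_n$ in closed form. Instead we construct $\partial_n$ and a $\Bbbk$-linear (left $A$-module, right $\Bbbk Q_0$) contracting homotopy $h_n\colon P_n\to P_{n+1}$ simultaneously by induction on $n$.

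The key tool is the Diamond condition. It gives the $\Bbbk$-basis $\Irr_S$ of $A$, so that $P_n$ has $\Bbbk$-basis $\{u\otimes p\otimes v\}$ with $u,v\in\Irr_S$ and $p\in S_n$. We filter this basis by the length of $p\,v$ and by a well-order compatible with reductions; the Diamond condition makes every path reduction-finite, so such an order exists. We then define $h$ on basis elements $u\otimes p\otimes v$ as follows:
\begin{itemize}
\item if $pv'$ is irreducible for every proper left subpath $v'$ of $v$ with $pv'$ reducible, the term is "not yet an ambiguity", and $h$ is set to zero or taken from lower degree;
\item otherwise, let $v=v'v''$ with $v'$ minimal such that $p v'$ contains a new element of $S$ overlapping its last factor. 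By the defining conditions (1)--(2) of ambiguities, $pv'$ is then an $(n-1)$-ambiguity, i.e. an element of $S_{n+1}$, and we set $h(u\otimes p\otimes v)=u\otimes pv'\otimes v''$ plus correction terms.
\end{itemize}
We then define $\partial_{n+1}$ on $1\otimes q\otimes 1$, for $q\in S_{n+1}$, by $\partial_{n+1}=h_{n-1}\partial_n$ applied to the leading term, corrected by
\[
(1\otimes q\otimes1)\mapsto q_{\text{left}}\otimes\cdots \; - \; h_{n-1}\partial_n(\cdots),
\]
the standard recipe that forces $\partial h+h\partial=\mathrm{id}$. This is the procedure of \cite[Section 4]{Chouhy-Solotar2015}.

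The main obstacle is checking that these recursive formulas are well defined and satisfy $\partial_{n}\partial_{n+1}=0$ and $\partial h+h\partial=\mathrm{id}$. We would prove these together by induction using a triangularity argument. On a basis element, $\partial h+h\partial-\mathrm{id}$ produces only terms that are strictly smaller in the order. Here we use that $\varphi(s)$ is irreducible and that, by reduction-uniqueness, rewriting products $v'v''$ into $\Irr_S$ strictly decreases the order. A terminating-induction argument then upgrades "identity modulo smaller terms" to an exact identity after iterating the correction; finiteness of the descending chains is guaranteed by reduction-finiteness. The combinatorial heart of the step is the uniqueness of the decomposition $p=u_0\cdots u_{n+1}$, which follows from condition (2) and the minimality of each $u_i$. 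This uniqueness is what makes $h$ well defined on basis elements. Finally, exactness of the augmented complex follows from the homotopy, and each $P_n$ is projective, so $P_\bullet$ is the claimed projective bimodule resolution.
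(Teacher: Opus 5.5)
Your plan matches the argument the paper relies on. The paper gives no proof of its own and quotes the result from Chouhy--Solotar (Section~4) and Barmeier--Wang (Theorem~4.2), where the differentials are built inductively together with a contracting homotopy and exactness comes from a triangularity argument over a well-founded order, as you outline. One point to adjust if you write it out: $\varphi(s)$ may contain paths longer than $s$, so the order should not be keyed on the length of $pv$; use instead the order on paths generated by one-step reductions together with passing to proper subpaths, which is well-founded because every path is reduction-finite.
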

	Given a reduction system $R = \{ (s, \varphi(s)) \mid s \in S,\ \varphi(s) \in \Bbbk Q \}$ satisfying the diamond condition, the Hochschild cohomology of $A = \Bbbk Q/I$ can be computed via the cochain complex
	\begin{equation}\label{eq:CS-complex}
		\begin{aligned}
			0 \to\;&
			\Hom_{(\Bbbk Q_0)^e}(\Bbbk Q_0, A)
			\xrightarrow{\partial^0}
			\Hom_{(\Bbbk Q_0)^e}(\Bbbk Q_1, A)
			\xrightarrow{\partial^1}
			\Hom_{(\Bbbk Q_0)^e}(\Bbbk S_2, A)
			\xrightarrow{\partial^2} \cdots \\
			&\xrightarrow{\partial^{n-1}}
			\Hom_{(\Bbbk Q_0)^e}(\Bbbk S_n, A)
			\xrightarrow{\partial^n}
			\Hom_{(\Bbbk Q_0)^e}(\Bbbk S_{n+1}, A)
			\to \cdots .
		\end{aligned}
	\end{equation}
	Note that $S_2=S$ in the complex above.
	
	We now explain how the differential $\partial^1$ acts on an element $\varphi \in \Hom_{(\Bbbk Q_0)^e}(\Bbbk Q_1, A)$.  
	Define the splitting map $\mathrm{split}_1: \Bbbk Q \to A \otimes_{\Bbbk Q_0} \Bbbk Q_1 \otimes_{\Bbbk Q_0} A$ (see \cite[Lemma 4.6]{Barmeier-Wang2020}) by
	\begin{align*}
		\mathrm{split}_1(p) = & \; 1 \otimes_{\Bbbk Q_0} x_1 \otimes_{\Bbbk Q_0} \pi(x_2 \cdots x_m) \\
		& + \sum\limits_{1 < i < m} \pi(x_1 \cdots x_{i-1}) \otimes_{\Bbbk Q_0} x_i \otimes_{\Bbbk Q_0} \pi(x_{i+1} \cdots x_m) \\
		& + \pi(x_1 \cdots x_{m-1}) \otimes_{\Bbbk Q_0} x_m \otimes_{\Bbbk Q_0} 1
	\end{align*}
	for a path $p = x_1 \cdots x_m$ with $x_i \in Q_1$, where $\pi: \Bbbk Q \to A$ is the canonical projection.  
	For $\varphi \in \Hom_{(\Bbbk Q_0)^e}(\Bbbk Q_1, A)$, we obtain an extension $\widetilde{\varphi} \in \Hom_{A^e}(A \otimes_{\Bbbk Q_0} \Bbbk Q_1 \otimes_{\Bbbk Q_0} A, A)$ defined by
	\[
	\widetilde{\varphi}(a \otimes_{\Bbbk Q_0} x \otimes_{\Bbbk Q_0} b) = a\varphi(x)b, \qquad a,b \in A,\; x \in Q_1.
	\]
	Then the differential $\partial^1$ acts on $\varphi$ by
	\begin{equation}
		\partial^1(\varphi)(s) = \widetilde{\varphi}\bigl(\mathrm{split}_1(s - \varphi(s))\bigr), \qquad s \in S.
		\label{eq:partial1}
	\end{equation}
	
	For later use in the proofs, we introduce the following convenient notation for basis elements of the spaces $\Hom_{(\Bbbk Q_0)^e}(\Bbbk S_n, A)$.  
	A basis element can be represented by a pair $(\alpha \parallel \beta)$ of parallel paths, where $\alpha \in S_n$ and $\beta \in \Bbbk \operatorname{Irr}_S$, defined by
	\[
	(\alpha \parallel \beta)(\gamma) = 
	\begin{cases}
		\beta, & \text{if } \gamma = \alpha, \\[2pt]
		0, & \text{if } \gamma \in S_n \setminus \{\alpha\}.
	\end{cases}
	\]
	
	The following lemma will be used later, which should be well-known to experts.
	\begin{lemma}
		\label{lemma:HH0-dimension}
		Let $Q$ be a connected and acyclic quiver.  
		Let $I$ be an admissible ideal of the path algebra $\Bbbk Q$, and set $A = \Bbbk Q / I$. Then
		\(
		\dim_{\Bbbk} \operatorname{HH}^0(A) = 1.
		\)
	\end{lemma}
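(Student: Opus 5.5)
We identify $\operatorname{HH}^0(A)$ with the center $Z(A)$, since $\operatorname{HH}^0(A)=\Hom_{A^e}(A,A)$ consists of the bimodule endomorphisms of $A$, each of which is multiplication by a central element. Equivalently, in the complex \eqref{eq:CS-complex}, $\operatorname{HH}^0(A)=\ker\partial^0$, where an element of $\Hom_{(\Bbbk Q_0)^e}(\Bbbk Q_0,A)$ is a tuple $(z_i)_{i\in Q_0}$ with $z_i\in e_iAe_i$, and $\partial^0$ sends it to $x\mapsto z_{s(x)}x-xz_{t(x)}$ on arrows. Since $\sum_i e_i=1$ and $1$ is central, $\dim\operatorname{HH}^0(A)\geq 1$, so it suffices to show that every central element is a scalar.

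First, use acyclicity. Since $I$ is admissible, $I\subseteq \Bbbk Q_{\geq 2}$, so $A$ is spanned by the images of paths. Because $Q$ has no oriented cycles, the only paths from a vertex $i$ to itself are the trivial path $e_i$. Hence $e_iAe_i=\Bbbk e_i$, and $e_i\neq 0$ in $A$ because $I\subseteq \Bbbk Q_{\geq 2}$. A central element $z$ satisfies $z=\sum_{i,j}e_ize_j$ and $e_ize_j=ze_ie_j=\delta_{ij}ze_i$. Therefore $z=\sum_i e_ize_i=\sum_i\lambda_ie_i$ with $\lambda_i\in\Bbbk$.

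Second, use connectedness. For an arrow $x\colon i\to j$, the element $x$ is nonzero in $A$, again because $I\subseteq \Bbbk Q_{\geq 2}$ and so $I$ contains no nonzero combination of arrows. Centrality gives $zx=xz$, that is, $\lambda_ix=\lambda_jx$, hence $\lambda_i=\lambda_j$. Connectedness of the underlying graph of $Q$ then forces all $\lambda_i$ to be equal, so $z\in\Bbbk\cdot 1$ and $\dim_\Bbbk\operatorname{HH}^0(A)=1$.

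No step here is a real obstacle. The only points requiring care are that admissibility guarantees both that the $e_i$ and the arrows survive in $A$, and that acyclicity ensures $A$ is finite dimensional with $e_iAe_i=\Bbbk e_i$. Only the inclusion $I\subseteq\Bbbk Q_{\geq 2}$ from admissibility is actually used.
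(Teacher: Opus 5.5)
Your proposal is correct and is essentially the paper's argument. The paper works in the reduction-system complex with the basis $(e_i\parallel e_i)$, computes $\partial^0\bigl(\sum_i k_i(e_i\parallel e_i)\bigr)(\alpha)=(k_{t(\alpha)}-k_{s(\alpha)})\alpha$, and uses connectedness to force all $k_i$ to be equal. You phrase the same steps through the center; in doing so you make explicit two facts the paper leaves implicit: acyclicity gives $e_iAe_i=\Bbbk e_i$, and $I\subseteq\Bbbk Q_{\geq 2}$ keeps the idempotents and arrows nonzero in $A$.
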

	
	\begin{proof}
		The differential $\partial^0$ acts on basis elements by
		\[
		\partial^0(e_i \parallel e_i)(\alpha) = \delta_{i,t(\alpha)}\,\alpha e_i - \delta_{i,s(\alpha)}\,e_i\alpha,\qquad \alpha\in Q_1,
		\]
		where $\delta$ is the Kronecker delta. 
		For a linear combination $\sum\limits_{i=1}^n k_i (e_i\parallel e_i)$ with $k_i\in\Bbbk$, we obtain
		\[
		\partial^0\left(\sum\limits_{i=1}^n k_i (e_i\parallel e_i)\right)(\alpha) = (k_{t(\alpha)}-k_{s(\alpha)})\,\alpha, \quad \text{for any $\alpha \in Q_1.$}
		\]
		Hence the element is a $0$-cocycle if and only if $k_{t(\alpha)}=k_{s(\alpha)}$ for every arrow $\alpha$. Since $Q$ is connected, all $k_i$ are equal. Therefore $\ker\partial^0$ is one‑dimensional, spanned by $\sum\limits_{i=1}^n (e_i\parallel e_i)$.
	\end{proof}

	\section{Symmetric square dg algebras of the annulus}
	We now suppose the surface $\Sigma$ is an annulus with stops, as illustrated in Figure \ref{annulus}. 
	
	\begin{figure}[htbp]
		\centering
		\begin{tikzpicture}[scale=0.7]
			
			\tikzset{
				with arrows/.style={
					decoration={ 
						markings,
						mark=at position #1 with {\arrow{>}}
					}, 
					postaction={decorate}
				}, 
				with arrows/.default=0.5,
				vertex/.style = {
					circle, draw, fill,  
					minimum size = 2pt, 
					inner sep=1pt
				}
			}
			
			\def \radius {1.5cm}
			\def \numSectors {10}
			
			\foreach \s in {1,...,\numSectors} {
				\draw[thick] ([shift=({360/\numSectors*(\s)}:\radius-0.6cm)]0,0) 
				arc ({360/\numSectors*(\s)}:{360/\numSectors*(\s+1)}:\radius-0.6cm);
				
				
				\draw[thick] ([shift=({360/\numSectors*(\s)}:\radius+1.5cm)]0,0) 
				arc ({360/\numSectors*(\s)}:{360/\numSectors*(\s+1)}:\radius+1.5cm);
			}
			
			\draw[with arrows=0] ([shift=({360/\numSectors*(7)}:\radius-0.6cm)]0,0) 
			arc ({360/\numSectors*(7)}:{360/\numSectors*(5)}:\radius-0.6cm);
			\draw[with arrows=0.3]({360/\numSectors*(3)}:\radius+1.5cm) 
			arc ({360/\numSectors*(3)}:{360/\numSectors*(4)}:\radius+1.5cm);
			
			\node[vertex,red] at ([shift=({360/\numSectors*(2.5)}:\radius+1.5cm)]0,0) {};
			\node[vertex,red] at ([shift=({360/\numSectors*(1.05)}:\radius+1.5cm)]0,0) {};
			\node[vertex,red] at ([shift=({360/\numSectors*(0)}:\radius+1.5cm)]0,0) {};
			\node[vertex,red] at ([shift=({360/\numSectors*(5)}:\radius+1.5cm)]0,0) {};
			\node[vertex,red] at ([shift=({360/\numSectors*(3.95)}:\radius+1.5cm)]0,0) {};
			\node[vertex,red] at ([shift=({360/\numSectors*(2.5)}:\radius-0.6cm)]0,0) {};
			\node[vertex,red] at ([shift=({360/\numSectors*(1.05)}:\radius-0.6cm)]0,0) {};
			\node[vertex,red] at ([shift=({360/\numSectors*(0)}:\radius-0.6cm)]0,0) {};
			\node[vertex,red] at ([shift=({360/\numSectors*(5)}:\radius-0.6cm)]0,0) {};
			\node[vertex,red] at ([shift=({360/\numSectors*(3.95)}:\radius-0.6cm)]0,0) {};

		\end{tikzpicture}
		\caption{An annulus with stops.}
		\label{annulus}
	\end{figure}
	Assume that the number of stops on the outer boundary is $n_1$ and on the inner boundary is $n_2$. Without loss of generality, we assume $n_1 \leq n_2$ throughout this paper. Note that $(\Sigma, \Lambda)$ is of type $\widetilde{\mathbb{A}}_{n_1,n_2}$. Given a full formal arc system on $(\Sigma, \Lambda)$, the associated endomorphism algebra is a gentle one-cycle algebra. By \cite[Proposition 4.3]{Jin-Schroll-Wang2025}, any such algebra is derived equivalent to the gentle algebra in the standard form shown in Figure \ref{standard form}, subject to the single relation $\beta \alpha_{n_1+1}=0$. We denote this standard gentle algebra by $G_{n_1, n_2}$.
	\begin{figure}[htbp]
		\begin{center}
			\begin{tikzcd}[column sep=normal, row sep=normal]
				& L_2 \arrow[r,"\alpha_2"]
				& L_3 \arrow[r, phantom, "\cdots" midway] 
				& L_{n_{1}-1} \arrow[r,"\alpha_{n_1-1}"] 
				& L_{n_{1}} \arrow[dl,"\alpha_{n_1}",swap] \\
				&& L_1\arrow[r,"\beta"]  \arrow[ul,"\alpha_1"] 
				& L_{n_{1}+1}\arrow[r,"\alpha_{n_1+1}"] 
				& \cdots \arrow[r,"\alpha_{n_1+n_2-2}"] 
				& L_{n_1+n_2-1} \arrow[r,"\alpha_{n_1+n_2-1}"] 
				& L_{n_1+n_2}
			\end{tikzcd}
			\caption{Quivers of gentle algebras $G_{n_1,n_2}$.}
			\label{standard form}
		\end{center}
	\end{figure}

	Let us denote by $\widetilde{\mathcal{A}}_{n_1,n_2}$ the dg endomorphism algebra of the associated generating set $\{L_i \times L_j\mid 1\leq i <j \leq n_1+n_2\}$ in $\mathcal{W}(\operatorname{Sym}^2(\Sigma), \Lambda^{(2)})$, which we call the {\it symmetric square dg algebra} of $\Sigma$. Note that if $n_1 = n_2=1$, the category $\mathcal{W}(\operatorname{Sym}^2(\Sigma), \Lambda^{(2)})\simeq \mathrm{per}(\Bbbk)$ is trivial. We may therefore assume $n_2\geq 2$. In this case the dg algebra $\widetilde{\mathcal{A}}_{n_1,n_2}$ can be described as follows.
	
	\begin{prop}\label{prop:symsquadg}
		Let $(\Sigma,\Lambda)$ be the annulus with stops of type $\widetilde{\mathbb{A}}_{n_1,n_2}$ with $n_2\geq 2$. Then the symmetric square dg algebra $\widetilde{\mathcal{A}}_{n_1,n_2}$ is isomorphic to the dg  path algebra of the quiver $\widetilde Q_{n_1,n_2}$ (depicted in Figure~\ref{dg-quiver A_{n1,n2}} below) modulo a dg ideal $\widetilde I_{n_1,n_2}$, described as follows:
		\begin{enumerate}[label=(\arabic*), leftmargin=*] 
			\item The arrows in $\widetilde Q_{n_1,n_2}$ consist of four types:
			\begin{itemize}[label=--]
				\item \textsc{Horizontal arrows}: 
				\(
				x_{i,j}= \operatorname{id}_{L_i} \otimes \alpha_j 
				\colon L_{i,j} \longrightarrow L_{i,j+1}
				\) for \(1\leq i<j\leq n_1+n_2-1\);
				\item \textsc{Vertical arrows}: 
				\(
				y_{i,j}= \alpha_i \otimes \operatorname{id}_{L_j} 
				\colon  L_{i,j} \longrightarrow L_{i+1,j}
				\) for \(1\leq i\leq n_1+n_2-2\) and \(i+2\leq j\leq n_1+n_2\);
				\item \textsc{Long blue arrows}: 
				\(
				u_{1,i}= \beta \otimes \operatorname{id}_{L_i} 
				\colon L_{1,i} \longrightarrow L_{i,n_1+1}
				\) for \(2\leq i\leq n_1+n_2\) and \(i\neq n_1+1\), where \(L_{i,n_1+1}\) is identified with \(L_{n_1+1,i}\) whenever \(i>n_1+1\). Note that all the long blue arrows start from the first row; see Figure~\ref{dg-quiver A_{n1,n2}};
				\item \textsc{Diagonal arrows}: 
				\(
				z_{i,i+1}= \alpha_i\alpha_{i+1} \otimes \operatorname{id}_{L_{i+1}} 
				\colon L_{i,i+1} \longrightarrow L_{i+1,i+2}
				\) for \(1\leq i\leq n_1+n_2-2\).
			\end{itemize}
			
			\item The ideal of relations $\widetilde I_{n_1,n_2}$ is generated by the following relations. A displayed relation is imposed only when all arrows appearing in it are defined by the index ranges specified above; otherwise, it is omitted.
			\begin{itemize}[label=--]
				\item \textsc{Commutative square relations of type 1}: $x_{i,j} y_{i,j+1} = y_{i,j} x_{i+1,j}$ for $1\leq i<j-1 \leq n_1+n_2-2$; see Figure \ref{fig:commutative-square-relations};
				\item \textsc{Commutative square relations of type 2}: $u_{1,i} x_{n_1+1,i}=x_{1,i} u_{1,i+1}$ for $n_1+2\leq i\leq  n_1+n_2-1$; see Figure \ref{fig:commutative-square-relations};
				\item \textsc{Commutative square relations of type 3}: $u_{1,i} y_{i,n_1+1}=x_{1,i}u_{1,i+1}$ for $1<i<n_1$; see Figure \ref{fig:commutative-square-relations};	
				\item \textsc{Commutative square relations of type 4}: $u_{1,n_1}z_{n_1,n_1+1}
				= x_{1,n_1}x_{1,n_1+1}u_{1,n_1+2}$ for \(n_2\geq 2\);
				\item \textsc{Zig-zag relations}: $z_{i,i+1} x_{i+1,i+2}  y_{i+1,i+3}= x_{i,i+1}  y_{i,i+2}  z_{i+1,i+2}$ for $1\leq i \leq n_1+n_2-3$;
				\item \textsc{Monomial relations}: 
				$u_{1,i} x_{i,n_1+1}=0$ for $1< i\leq n_1$; 
				$u_{1,n_1+i} y_{n_1+1,n_1+i}=0$ for $3\leq i \leq n_2$; 
				$u_{1,n_1+2} z_{n_1+1,n_1+2}=0$ whenever this path is defined; 
				and $z_{i,i+1} z_{i+1,i+2} = 0$ for $1\leq i \leq n_1+n_2-3$.
			\end{itemize}
			\item The differential is uniquely determined by the Leibniz rule and its action on the diagonal arrows:
			\(
			d(z_{i,i+1}) = x_{i,i+1} y_{i,i+2} \text{~for~} 1\leq i\leq n_1+n_2-2.
			\)
			In particular, the paths \(x_{i,i+1}y_{i,i+2}\) become boundaries, while the arrows \(z_{i,i+1}\) themselves do not define cohomology classes.
		\end{enumerate}
	\end{prop}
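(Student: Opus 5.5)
The plan is to compute the dg algebra directly from Auroux's description and then match it with the quiver presentation. The first step is to fix the generating arcs $L_1,\dots,L_{n_1+n_2}$ realizing the standard gentle algebra $G_{n_1,n_2}$ of Figure~\ref{standard form}, so that the morphism spaces of $\mathcal W(\Sigma,\Lambda)$ have as basis the idempotents together with the nonzero paths of $G_{n_1,n_2}$: the paths $\alpha_i\alpha_{i+1}\cdots\alpha_j$, the arrow $\beta$, and the products $\alpha_{n_1}\beta$ and $\beta\alpha_{n_1+1}\cdots$, subject to the relation $\beta\alpha_{n_1+1}=0$. Here all the $\alpha$'s on the cycle lie on the outer boundary and the remaining $\alpha$'s on the inner one. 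For each pair $i<j$, the space $\Hom(L_i\times L_j,L_k\times L_l)$ is the sum over the two bijections $\{i,j\}\to\{k,l\}$ of the tensor products of these one-dimensional chord spaces. This gives an explicit basis of $\widetilde{\mathcal A}_{n_1,n_2}$ indexed by pairs of chords $f\otimes g$.

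The second step is to define a map $\Bbbk\widetilde Q_{n_1,n_2}\to\widetilde{\mathcal A}_{n_1,n_2}$ by sending each arrow to the tensor listed in part~(1). We then check three things.
\begin{enumerate}[label=(\roman*)]
\item It is surjective. Every basis element $f\otimes g$ factors as a product of the generators. When $f$ and $g$ move independently (no interaction of strands), we write $f\otimes g=(f\otimes\mathrm{id})(\mathrm{id}\otimes g)$ and reduce to $x$-, $y$- and $u$-arrows. When the two strands must pass each other, that is when $f$ starts at $L_i$, $g$ at $L_{i+1}$ and the targets interleave, we obtain a crossing strand diagram, and this is generated by $z_{i,i+1}=\alpha_i\alpha_{i+1}\otimes\mathrm{id}_{L_{i+1}}$ followed by $x$/$y$-arrows.
\item All listed relations hold, by computing compositions with the strand rules. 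The commutative squares come from $\mathrm{id}\otimes\alpha$ and $\alpha\otimes\mathrm{id}$ commuting. Type 2 and 3 use $\beta$ on the second factor. The monomial relations come from $\beta\alpha_{n_1+1}=0$, from $\alpha_{n_1}\beta$-type products vanishing where the gentle relation applies, and, for $zz=0$, from the double-crossing rule. The zig-zag relation is the equality of the two factorizations of $\alpha_i\alpha_{i+1}\alpha_{i+2}\otimes\mathrm{id}_{L_{i+2}}$ (suitably permuted).
\item The differential matches. Resolving the unique crossing of $z_{i,i+1}$ gives $\alpha_i\otimes\alpha_{i+1}=x_{i,i+1}y_{i,i+2}$. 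The $x,y,u$ arrows have no crossings, so $d=0$ on them, and by the Leibniz rule $d$ is determined.
\end{enumerate}

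The third step, which I expect to be the main obstacle, is injectivity: showing that $\widetilde I_{n_1,n_2}$ is the full kernel. For this I will choose a reduction system on $\Bbbk\widetilde Q_{n_1,n_2}$ from the relations: orient each commutativity and zig-zag relation to rewrite toward a normal form, for instance pushing $x$ before $y$ and $z$ to the left, and make the monomials reductions to zero. Then we count irreducible paths between each pair of vertices $L_{i,j}\to L_{k,l}$ and compare with $\dim\Hom(L_i\times L_j,L_k\times L_l)$. That dimension is at most $2$, coming from the two permutations, with the crossing-permutation term present only when both chords exist and interleave. Since the map is surjective and the relations lie in the kernel, equality of these dimension counts for every pair of vertices proves the isomorphism. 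The case analysis here splits into pairs with both indices on the cycle part ($\le n_1$), both on the linear part ($>n_1$), and mixed pairs involving $\beta$. The mixed case, with the identification $L_{i,n_1+1}=L_{n_1+1,i}$ and the type 4 square, is the delicate one, and I would handle it by explicitly listing paths through the long blue arrows.
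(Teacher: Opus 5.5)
Your strategy is the same as the paper's: define the map on arrows, note surjectivity, verify the relations and $d(z_{i,i+1})=x_{i,i+1}y_{i,i+2}$ by strand calculus, and get injectivity by matching $\dim\Hom(L_i\times L_j,L_k\times L_\ell)$ against irreducible paths of a reduction system. The problem is the data you plan to match against. It is wrong, and it would make the key counting step fail.

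First, you have misread $G_{n_1,n_2}$. It has no oriented cycle and no product $\alpha_{n_1}\beta$: the arrows $\alpha_{n_1}\colon L_{n_1}\to L_{n_1+1}$ and $\beta\colon L_1\to L_{n_1+1}$ have the same target. Its only relation is $\beta\alpha_{n_1+1}=0$, so every $\alpha_i\alpha_{i+1}$ is nonzero. This is exactly what makes all the $z_{i,i+1}=\alpha_i\alpha_{i+1}\otimes\operatorname{id}_{L_{i+1}}$ exist. It also forces all the $\alpha$'s to be consecutive chords on a single boundary component, with $\beta$ on the other one. Your picture, with the ``cycle'' $\alpha$'s on one boundary and the remaining $\alpha$'s on the other, would give $\alpha_{n_1}\alpha_{n_1+1}=0$ and so kill $z_{n_1,n_1+1}$ and the type~4 relation. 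Likewise, all monomial relations involving the $u_{1,i}$ come from $\beta\alpha_{n_1+1}=0$, and $zz=0$ comes from the double-crossing rule; none of them comes from a product $\alpha_{n_1}\beta$.

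Second, and more seriously, the chord spaces are not all one-dimensional: $\Hom(L_1,L_{n_1+1})=\Bbbk\beta\oplus\Bbbk\,\alpha_1\cdots\alpha_{n_1}$. So your bound $\dim\Hom(L_i\times L_j,L_k\times L_\ell)\le 2$ is false; the dimension is $3$ in cases (d) and (e) of Table~\ref{tab:dimensions}. For example, for $n_1\ge 2$ the space $\Hom(L_1\times L_{n_1},L_{n_1+1}\times L_{n_1+2})$ has basis
$\beta\otimes\alpha_{n_1}\alpha_{n_1+1}$, $\alpha_1\cdots\alpha_{n_1}\otimes\alpha_{n_1}\alpha_{n_1+1}$ and $\alpha_1\cdots\alpha_{n_1+1}\otimes\alpha_{n_1}$. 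These correspond to three irreducible paths:
$x_{1,n_1}x_{1,n_1+1}u_{1,n_1+2}$, which equals $u_{1,n_1}z_{n_1,n_1+1}$ by type~4;
$x_{1,n_1}x_{1,n_1+1}y_{1,n_1+2}\cdots y_{n_1,n_1+2}$;
and $x_{1,n_1}y_{1,n_1+1}\cdots y_{n_1-1,n_1+1}z_{n_1,n_1+1}$.
Against your bound you would find a spurious mismatch exactly in the mixed cases through the long arrows that you flagged as delicate. With the correct chord spaces, where only $\Hom(L_1,L_{n_1+1})$ is two-dimensional, your plan goes through and coincides with the paper's argument.
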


	\begin{figure}[htbp]
		\centering
		\begin{tikzcd}[
			column sep={2cm,between origins},
			row sep={1.4cm,between origins},
			every matrix/.append style={name=mycd}
			]
			\text{\scriptsize$L_{1,2}$} \ar[red,dr] \ar[r] \ar[blue,drrrr,bend left=1]& 
			\text{\scriptsize$L_{1,3}$} \ar[r] \ar[d]\ar[blue, ddrrr]  & 
			\cdots \ar[r]& 
			\text{\scriptsize$L_{1,n_1}$} \ar[d] \ar[r] \ar[blue,ddddr,bend right=10] & 
			\text{\scriptsize$L_{1,n_1+1}$} \ar[d]\ar[r] &
			\text{\scriptsize$L_{1,n_1+2}$} \ar[d]\ar[r]\ar[blue,ddddd,bend right=35]&
			\cdots\ar[r]\ar[blue,ddddd,bend right=35]&
			\text{\scriptsize$L_{1,n_1+n_2}$} \ar[d]\ar[blue,ddddd,bend right=35]\\
			& 
			\text{\scriptsize$L_{2,3}$} \ar[r] & 
			\cdots \ar[r] & 
			\text{\scriptsize$L_{2,n_1}$} \ar[d] \ar[r] & 
			\text{\scriptsize$L_{2,n_1+1}$} \ar[d] \ar[r] &
			\text{\scriptsize$L_{2,n_1+2}$} \ar[d]\ar[r]&
			\cdots\ar[r]&
			\text{\scriptsize$L_{2,n_1+n_2}$} \ar[d]\\
			&& \textcolor{red}{\ddots}&
			\vdots \ar[d]  & 
			\vdots \ar[d] &
			\vdots \ar[d]& 
			& 
			\vdots \ar[d]\\
			&&&
			\text{\scriptsize$L_{n_1-1,n_1}$} \ar[r] \ar[red,dr] & 
			\text{\scriptsize$L_{n_1-1,n_1+1}$} \ar[d] \ar[r]&
			\text{\scriptsize$L_{n_1-1,n_1+2}$}\ar[r]\ar[d]&
			\cdots\ar[r]&
			\text{\scriptsize$L_{n_1-1,n_1+n_2}$}\ar[d]\\
			&&&&
			\text{\scriptsize$L_{n_1,n_1+1}$}\ar[r] \ar[red,dr]&
			\text{\scriptsize$L_{n_1,n_1+2}$}\ar[r]\ar[d]&
			\cdots\ar[r]&
			\text{\scriptsize$L_{n_1,n_1+n_2}$}\ar[d]\\
			&&&&&
			\text{\scriptsize$L_{n_1+1,n_1+2}$}\ar[r]\ar[red,dr]&
			\cdots\ar[r]&
			\text{\scriptsize$L_{n_1+1,n_1+n_2}$}\ar[d]\\
			&&&&&&
			\textcolor{red}{\ddots}&
			\vdots \ar[d] \\
			&&&&&&&
			\text{\scriptsize$L_{n_1+n_2-1,n_1+n_2}$}
		\end{tikzcd}
		\captionsetup{justification=centering}
		\caption{Dg quiver $\widetilde Q_{n_1,n_2}$ of $\widetilde{\mathcal{A}}_{n_1,n_2}$, where the horizontal and vertical arrows are $x$ and $y$, the blue ones are $u$ and red ones are $z$.}
		\label{dg-quiver A_{n1,n2}}
	\end{figure}
	
	\begin{figure}[htbp]
		\centering
		\begin{subfigure}[b]{0.32\textwidth}
			\centering
			\vspace*{1.8em} 
			\begin{tikzcd}[ampersand replacement=\&, row sep=large, column sep=1cm]
				L_{i,j} \ar[r,"x_{i,j}"] \ar[d,"y_{i,j}"] \& L_{i,j+1} \ar[d,"y_{i,j+1}"] \\
				L_{i+1,j} \ar[r,"x_{i+1,j}"] \& L_{i+1,j+1}
			\end{tikzcd}
			\caption*{\centering Commutative square relations of type $1$.}
			\label{subfig:2x2}
		\end{subfigure}\hfill
		\begin{subfigure}[b]{0.32\textwidth}
			\centering
			\begin{tikzcd}[ampersand replacement=\&, row sep=0.95cm, column sep=0.6cm]
				L_{1,i} \ar[dd, blue, bend right=40, "u_{1,i}"] \ar[r, "x_{1,i}"] \& L_{1,i+1} \ar[dd, blue, bend right=40, "u_{1,i+1}"] \\
				\&  \\
				L_{n_1+1,i} \ar[r, "x_{n_1+1,i}"] \& L_{n_1+1,i+1}
			\end{tikzcd}
			\caption*{\centering Commutative square relations of type $2$.}
			\label{subfig:3x2}
		\end{subfigure}\hfill
		\begin{subfigure}[b]{0.32\textwidth}
			\centering
			\begin{tikzcd}[ampersand replacement=\&, row sep=0.6cm, column sep=0.6cm]
				L_{1,i} \ar[rrd, blue,"u_{1,i}" {xshift=2pt, yshift=0pt}, swap] \ar[r, "x_{1,i}"] \& L_{1,i+1} \ar[ddr, blue,"u_{1,i+1}" {xshift=2pt, yshift=0pt}, swap] \&  \\
				\&  \& L_{i,n_1+1} \ar[d,"y_{i,n_1+1}"] \\
				\&  \& L_{i+1,n_1+1}
			\end{tikzcd}
			\caption*{\centering Commutative square relations of type $3$.}
			\label{subfig:3x3}
		\end{subfigure}
		\caption{Diagrams of commutative square relations.}
		\label{fig:commutative-square-relations}
	\end{figure}
	
	\begin{remark}
		The $\mathbb{Z}$-grading on $\widetilde{\mathcal A}_{n_1,n_2}$ will be discussed later in Subsection \ref{subsec:grading}. Note that the relations are compatible with the differential. For instance, we have
		\[
		d(z_{i,i+1}z_{i+1,i+2}) = x_{i,i+1}y_{i,i+2}z_{i+1,i+2}-z_{i,i+1}x_{i+1,i+2}y_{i+1,i+3}=0.
		\]
		That is, the ideal $\widetilde I_{n_1,n_2}$ is a dg ideal so that $\Bbbk \widetilde Q_{n_1, n_2}/\widetilde I_{n_1,n_2}$ is a dg algebra. 
	\end{remark}
	
	\begin{proof}[Proof of Proposition \ref{prop:symsquadg}]
		Note that there is an algebra homomorphism $\varphi\colon \Bbbk \widetilde Q_{n_1, n_2} \to \widetilde{\mathcal{A}}_{n_1,n_2}$ uniquely determined by 
		\[
		\varphi(x_{i,j}) = \operatorname{id}_{L_i} \otimes \alpha_j,\quad
		\varphi(y_{i,j}) = \alpha_i \otimes \operatorname{id}_{L_j},\quad
		\varphi(u_{1,i}) = \beta \otimes \operatorname{id}_{L_i},\quad
		\varphi(z_{i,i+1}) = \alpha_i\alpha_{i+1} \otimes \operatorname{id}_{L_{i+1}}.
		\]
		Clearly, $\varphi$ is surjective since $\widetilde{\mathcal{A}}_{n_1,n_2}$ is generated by these elements as an algebra. 
		
		Let us verify that $\varphi$ is compatible with the differential. Indeed, by the differential in the strand diagram we have
		\(
		d(\alpha_i\alpha_{i+1} \otimes \operatorname{id}_{L_{i+1}}) = \alpha_i \otimes \alpha_{i+1}.
		\)
		On the other hand,
		\[
		\alpha_i \otimes \alpha_{i+1} = (\operatorname{id}_{L_i} \otimes \alpha_{i+1})(\alpha_i \otimes \operatorname{id}_{L_{i+2}}).
		\]
		This shows that $\varphi d(z_{i,i+1}) = d\varphi(z_{i,i+1})$.
		
		It remains to show that $\varphi$ factors through the ideal $I$ so that it induces an algebra morphism $\widetilde \varphi \colon \Bbbk \widetilde Q_{n_1, n_2}/\widetilde I_{n_1,n_2} \to \widetilde{\mathcal{A}}_{n_1,n_2}$. This can be checked case by case.
		
		\noindent{Commutative squares of type 1:}
		We have $\varphi(x_{i,j} y_{i,j+1} - y_{i,j} x_{i+1,j}) = 0$ since
		\begin{align*}
			\varphi(x_{i,j} y_{i,j+1} - y_{i,j} x_{i+1,j})
			&= \varphi(x_{i,j})\varphi(y_{i,j+1}) - \varphi(y_{i,j})\varphi(x_{i+1,j})\\
			&= (\operatorname{id}_{L_i} \otimes \alpha_j)(\alpha_i \otimes \operatorname{id}_{L_{j+1}}) - (\alpha_i \otimes \operatorname{id}_{L_j})(\operatorname{id}_{L_{i+1}} \otimes \alpha_j)\\
			&= \alpha_i \otimes \alpha_j - \alpha_i \otimes \alpha_j = 0.
		\end{align*}
		
		\noindent{Commutative squares of type 2:}
		We have $\varphi(u_{1,i} x_{n_1+1,i} - x_{1,i} u_{1,i+1}) = 0$ since
		\begin{align*}
			\varphi(u_{1,i} x_{n_1+1,i} - x_{1,i} u_{1,i+1})
			&= (\beta \otimes \operatorname{id}_{L_i})(\operatorname{id}_{L_{n_1+1}} \otimes \alpha_i) - (\operatorname{id}_{L_1} \otimes \alpha_i)(\beta \otimes \operatorname{id}_{L_{i+1}})\\
			&= \beta \otimes \alpha_i - \beta \otimes \alpha_i = 0.
		\end{align*}
		
		\noindent{Commutative squares of type 3:}
		We have $\varphi(u_{1,i} y_{i,n_1+1} - x_{1,i} u_{1,i+1}) = 0$ since
		\begin{align*}
			\varphi(u_{1,i} y_{i,n_1+1} - x_{1,i} u_{1,i+1})
			&= (\beta \otimes \operatorname{id}_{L_i})(\alpha_i \otimes \operatorname{id}_{L_{n_1+1}}) - (\operatorname{id}_{L_1} \otimes \alpha_i)(\beta \otimes \operatorname{id}_{L_{i+1}})\\
			&= \beta \otimes \alpha_i - \beta \otimes \alpha_i = 0.
		\end{align*}
		
		\noindent{Commutative squares of type 4:} We have $\varphi(u_{1,n_1}z_{n_1,n_1+1}-x_{1,n_1}x_{1,n_1+1}u_{1,n_1+2}) = 0$ since
		\begin{align*}
			\varphi(u_{1,n_1}z_{n_1,n_1+1}-x_{1,n_1}x_{1,n_1+1}u_{1,n_1+2})&=(\beta \otimes \operatorname{id}_{L_{n_1}})(\alpha_{n_1}\alpha_{n_1+1} \otimes \operatorname{id}_{L_{n_1+1}})\\
			&\quad-(\operatorname{id}_{L_1} \otimes \alpha_{n_1})(\operatorname{id}_{L_1}\otimes \alpha_{n_1+1})(\beta \otimes \operatorname{id}_{L_{n_1+2}})\\
			&=\beta \otimes \alpha_{n_1}\alpha_{n_1+1}-\beta \otimes \alpha_{n_1}\alpha_{n_1+1} = 0.
		\end{align*}
		
		\noindent{Zig-zag relations:}
		We have $\varphi(z_{i,i+1} x_{i+1,i+2} y_{i+1,i+3} - x_{i,i+1} y_{i,i+2} z_{i+1,i+2}) = 0$ since
		\begin{align*}
			\varphi(z_{i,i+1} x_{i+1,i+2} y_{i+1,i+3} - x_{i,i+1} y_{i,i+2} z_{i+1,i+2})
			&= \alpha_i\alpha_{i+1}\alpha_{i+2} \otimes \alpha_{i+1} - \alpha_i\alpha_{i+1}\alpha_{i+2} \otimes \alpha_{i+1} = 0.
		\end{align*}
		
		\noindent{Monomial relations:}
		We have $\varphi(z_{i,i+1} z_{i+1,i+2}) = 0$ because
		\begin{align*}
			\varphi(z_{i,i+1} z_{i+1,i+2})
			&= (\alpha_i\alpha_{i+1} \otimes \operatorname{id}_{L_{i+1}})(\alpha_{i+1}\alpha_{i+2} \otimes \operatorname{id}_{L_{i+2}}) = 0,
		\end{align*}
		since the corresponding diagram has two strands crossing more than once. The remaining monomial relations involving $u_{1,i}$ can be verified similarly. As a result, there is an algebra morphism $\widetilde \varphi\colon \Bbbk \widetilde Q_{n_1, n_2}/\widetilde I_{n_1,n_2} \to \widetilde{\mathcal{A}}_{n_1,n_2}$, which is surjective.
		
		To prove that $\widetilde \varphi$ is injective, we compare the dimensions of the morphism spaces in $\widetilde{\mathcal{A}}_{n_1,n_2}$ with those in $\Bbbk\widetilde Q_{n_1,n_2}/\widetilde I_{n_1,n_2}$. Since $\widetilde\varphi$ is surjective, it suffices to show that the two algebras have the same finite dimension. This is achieved by computing the dimensions of
		\[
		\Hom_{\mathcal{W}(\operatorname{Sym}^2(\Sigma),\Lambda^{(2)})}(L_i\times L_j,\; L_k\times L_\ell)
		\]
		for all vertices $L_{i,j}$ and $L_{k,\ell}$ ($1\le i<j\le n_1+n_2$, $1\le k<\ell\le n_1+n_2$). By definition of the morphism spaces in the symmetric‑product Fukaya category,
		\[
		\Hom(L_i\times L_j,\; L_k\times L_\ell)=\Hom(L_i,L_k)\otimes\Hom(L_j,L_\ell)\;\oplus\;\Hom(L_i,L_\ell)\otimes\Hom(L_j,L_k).
		\]
		By computing the dimensions of the underlying Reeb chords on $(\Sigma, \Lambda)$, a case-by-case analysis yields the dimensions summarized in Table \ref{tab:dimensions}, where we assume $i < j$ and $k < \ell$.
		
		\begin{table}[htbp]
			\centering
			\renewcommand{\arraystretch}{1.3}
			\begin{tabular}{llc}
				\hline
				\textsc{Case} & \textsc{Constraints} & $\dim \operatorname{Hom}$ \\
				\hline
				\multirow{2}{*}{\textup{(a)} $k, \ell \neq n_1+1$} 
				& $i < j \le k < \ell$ & 2 \\
				& $i \le k < j \leq \ell$ & 1 \\
				\hline
				\multirow{2}{*}{\textup{(b)} $k = n_1+1, i \neq 1$} 
				& $i < j \le n_1+1 < \ell$ & 2 \\
				& $i \le n_1+1 < j \leq \ell$ & 1 \\
				\hline
				\multirow{2}{*}{\textup{(c)} $\ell = n_1+1, i \neq 1$} 
				& $i < j \le k < n_1+1$ & 2 \\
				& $i \le k < j \le n_1+1$ & 1 \\
				\hline
				\multirow{2}{*}{\textup{(d)} $i = 1, k = n_1+1$} 
				& $1 < j \leq n_1+1 \le \ell$  & 3 \\
				& $1 < n_1+1 < j \leq \ell$ & 2 \\
				\hline
				\multirow{2}{*}{\textup{(e)} $i = 1, \ell = n_1+1$} 
				& $1 < j \le k < n_1+1$ & 3 \\
				& $1 \le k < j \le n_1+1$ & 1 \\
				\hline
			\end{tabular}
			\vspace{2mm}
			\caption{Dimensions of the morphism spaces between the selected generators in $\mathcal{W}(\operatorname{Sym}^2(\Sigma), \Lambda^{(2)})$. All other cases yield dimension 0.}
			\label{tab:dimensions}
		\end{table}
		All other ordered quadruples give dimension $0$. One checks that the quiver with relations $\Bbbk\widetilde Q_{n_1,n_2}/\widetilde I_{n_1,n_2}$ has the same dimensions for the corresponding Hom‑spaces. This can be done by constructing a reduction system satisfying the diamond condition. Since $\widetilde\varphi$ is surjective and both algebras have the same finite dimension, it must be an isomorphism. Hence $\widetilde{\mathcal{A}}_{n_1,n_2}\cong \Bbbk\widetilde Q_{n_1,n_2}/\widetilde I_{n_1,n_2}$.
	\end{proof}


\subsection{Grading on the dg algebra $\widetilde{\mathcal{A}}_{n_1,n_2}$}
\label{subsec:grading}
In the absence of any additional structures, $\mathcal{W}(\operatorname{Sym}^k(\Sigma), \Lambda^{(k)})$ of the symmetric product of a general surface $(\Sigma, \Lambda)$ is typically equipped only with a $\mathbb{Z}/2\mathbb{Z}$-grading. However, when the genus of $\Sigma$ is zero, the possible $\mathbb{Z}$-gradings on $\mathcal{W}(\operatorname{Sym}^k(\Sigma), \Lambda^{(k)})$ form a torsor over the group $H^1(\operatorname{Sym}^k(\Sigma), \mathbb{Z}) \cong H^1(\Sigma, \mathbb{Z})$ (see \cite{Seidel2000, Dyckerhoff-Jasso-Lekili2021}). In what follows, we show that the dg algebra $\widetilde{\mathcal{A}}_{n_1, n_2}$ can be equipped with a $\mathbb{Z}$-grading that  is completely determined by a single parameter: the degree of the arrow $u_{1,i}$ in the quiver $\widetilde{Q}_{n_1, n_2}$ (up to derived equivalence). This coincides with the above fact that the possible gradings form a torsor over $H^1(\operatorname{Sym}^2(\Sigma), \mathbb{Z}) \cong \mathbb{Z}$. 

We now equip the quiver $\widetilde{Q}_{n_1, n_2}$ with an arbitrary $\mathbb{Z}$-grading by assigning a degree $\deg(\alpha) = |\alpha| \in \mathbb{Z}$ to each arrow $\alpha \in \widetilde{Q}_{n_1, n_2}$, so that $\widetilde{\mathcal A}_{n_1,n_2}$ is differential $\mathbb Z$-graded. The following lemma establishes a fundamental grading identity,  which serves as a key algebraic constraint and a derived invariant for the symmetric square algebra $\widetilde{\mathcal{A}}_{n_1, n_2}$.

\begin{lemma}\label{der invariant} 
	For any well-defined $\mathbb{Z}$-grading on $\widetilde{\mathcal A}_{n_1,n_2}$, the following identity holds for each fixed $2 \leq i \leq n_1$ and $n_1+2 \leq j \leq n_1+n_2$: 
	\[ \sum\limits_{k=i}^{n_1} |x_{1,k}| + \sum\limits_{r=1}^{i-1} |y_{r,n_1+1}| -|u_{1,i}|+1 = \sum\limits_{l=1}^{n_1} |y_{l,j}| - |u_{1,j}|. \] 
	When \(n_1=1\), the first range for $i$ is empty and the equation reduces to the following  $$|y_{1,j}|-|u_{1,j}| = |y_{1,j'}|-|u_{1,j'}|$$ for any $3\leq j, j'\leq n_2+1$.
\end{lemma}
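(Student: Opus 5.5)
The identity comes from requiring every relation generating $\widetilde I_{n_1,n_2}$ to be homogeneous, together with $|d|=1$. Concretely, I will show that both sides of the claimed identity compute the degree of one and the same type of path, namely a path from the first row of $\widetilde Q_{n_1,n_2}$ down to the row $n_1+1$. The plan is as follows. A $\mathbb Z$-grading on $\widetilde{\mathcal A}_{n_1,n_2}$ is well defined exactly when each relation in Proposition~\ref{prop:symsquadg} is homogeneous and $d$ raises degree by one. This yields the following linear constraints on arrow degrees:
\begin{itemize}[label=--]
\item the square relations of types 1--4 and the zig-zag relations give equalities of path degrees;
\item $d(z_{i,i+1})=x_{i,i+1}y_{i,i+2}$ gives $|z_{i,i+1}|+1=|x_{i,i+1}|+|y_{i,i+2}|$;
\item the monomial relations impose nothing.
\end{itemize}

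First I treat the left side. Fix $2\le i\le n_1$ and use the type 3 squares $u_{1,k}y_{k,n_1+1}=x_{1,k}u_{1,k+1}$ for $i\le k<n_1$. Telescoping them gives
\[|u_{1,i}|+\sum_{k=i}^{n_1-1}|y_{k,n_1+1}|=\sum_{k=i}^{n_1-1}|x_{1,k}|+|u_{1,n_1}|.\]
This reduces everything to the case $i=n_1$, up to bookkeeping of the $y$-terms. Next I apply the type 4 relation
\[|u_{1,n_1}|+|z_{n_1,n_1+1}|=|x_{1,n_1}|+|x_{1,n_1+1}|+|u_{1,n_1+2}|\]
and substitute $|z_{n_1,n_1+1}|=|x_{n_1,n_1+1}|+|y_{n_1,n_1+2}|-1$; this substitution is where the $+1$ in the statement comes from. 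That expresses $|u_{1,n_1+2}|$ in terms of $|u_{1,i}|$, $x$-degrees of the first row, and $x$-, $y$-degrees around $L_{n_1,n_1+1}$.

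Then I treat the right side by moving from the column $n_1+2$ to a general column $j$. The type 2 squares $u_{1,k}x_{n_1+1,k}=x_{1,k}u_{1,k+1}$, combined with the type 1 squares in the block of rows $1,\dots,n_1+1$, show that $\sum_{l=1}^{n_1}|y_{l,j}|-|u_{1,j}|$ is independent of $j\ge n_1+2$. The argument is this: compare the two paths from $L_{1,k}$ to $L_{n_1+1,k+1}$. One goes vertically down column $k$ and then horizontally; the other goes horizontally and then down column $k+1$. Type 1 squares make these two paths equal in degree, and the type 2 square then identifies the difference. For $n_1=1$ this step alone is the claimed statement.

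It remains to match the two sides at $j=n_1+2$. For this I express $\sum_{l=1}^{n_1}|y_{l,n_1+2}|$ through the first row and column $n_1+1$, using type 1 squares on the staircase region together with the zig-zag relations near the diagonal. The zig-zag relations give $|z_{i,i+1}|+|x_{i+1,i+2}|+|y_{i+1,i+3}|=|x_{i,i+1}|+|y_{i,i+2}|+|z_{i+1,i+2}|$, which, after eliminating $z$ via $d$, is again a square-type equality.

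I expect the main obstacle to be this diagonal bookkeeping: the diagonal vertices $L_{k,k+1}$ have no vertical arrow out of them, so the type 1 commutative squares do not reach there. One must route a path from $L_{1,n_1+1}$ down to $L_{n_1,n_1+1}$ and across, and check that the $x_{1,k}$ terms for $k\ge i$ and the $y_{r,n_1+1}$ terms for $r<i$ on the left side exactly reassemble $\sum_l|y_{l,n_1+2}|$. To handle it, I would compute both sides as the degree of a single concrete path from $L_{1,i}$ to $L_{n_1+1,n_1+2}$ via $L_{1,n_1+2}$. One route goes down column $n_1+2$, with degree $\sum_{k\ge i}|x_{1,k}|+\sum_l|y_{l,n_1+2}|$. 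The other route goes via $u_{1,i}$ and then down column $n_1+1$ and across, using $z$ once, with degree $|u_{1,i}|+\sum_{r\ge i}|y_{r,n_1+1}|+\dots$. Equating these two degrees and telescoping should give the identity.
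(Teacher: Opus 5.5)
Your first three steps are correct and match the paper. Telescoping the type 3 squares reduces the left side to $i=n_1$. The type 4 relation together with $|z_{n_1,n_1+1}|=|x_{n_1,n_1+1}|+|y_{n_1,n_1+2}|-1$ is where the $+1$ enters. Type 2 squares combined with the type 1 squares in rows $1,\dots,n_1+1$ show that $\sum_l|y_{l,j}|-|u_{1,j}|$ is independent of $j$; you are right that type 1 is needed there, which the paper leaves implicit.

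\textbf{The gap is in the matching step at $j=n_1+2$.} The mechanism you propose there fails. You want to equate two routes from $L_{1,i}$ to $L_{n_1+1,n_1+2}$:
\[x_{1,i}\cdots x_{1,n_1+1}\,y_{1,n_1+2}\cdots y_{n_1,n_1+2}\quad\text{and}\quad u_{1,i}\,y_{i,n_1+1}\cdots y_{n_1-1,n_1+1}\,z_{n_1,n_1+1}.\]
No relation identifies these two paths. They are different basis morphisms of the three-dimensional morphism space from $L_{1,i}$ to $L_{n_1+1,n_1+2}$: the first tensor factor is $\alpha_1\cdots\alpha_{n_1}$ in one and $\beta$ in the other. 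So homogeneity imposes nothing between their degrees. In the standard grading of Subsection~\ref{subsec:grading} they have degrees $0$ and $m$ respectively. Moreover, the identity contains $|u_{1,n_1+2}|$, which does not appear in this comparison, so it could not yield the identity.

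Two further points in that paragraph are off. The zig-zag relations give nothing: after substituting $|z|=|x|+|y|-1$ on both sides they become tautologies. And $|y_{n_1,n_1+2}|$ cannot be expressed through squares, since no type 1 square starts at $L_{n_1,n_1+1}$. Instead it cancels against the copy of $|y_{n_1,n_1+2}|$ inside $|z_{n_1,n_1+1}|$.

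\textbf{The repair uses two genuine relations.}
\begin{enumerate}
\item Your type 3 and type 4 computations together say exactly that
\[x_{1,i}\cdots x_{1,n_1+1}\,u_{1,n_1+2}=u_{1,i}\,y_{i,n_1+1}\cdots y_{n_1-1,n_1+1}\,z_{n_1,n_1+1}.\]
This is the correct route via $L_{1,n_1+2}$: it passes through $u_{1,n_1+2}$, not down the $y$-column.
\item Iterating type 1 squares in column $n_1+1$ gives
\[x_{1,n_1+1}\,y_{1,n_1+2}\cdots y_{n_1-1,n_1+2}=y_{1,n_1+1}\cdots y_{n_1-1,n_1+1}\,x_{n_1,n_1+1},\]
an equality of paths from $L_{1,n_1+1}$ to $L_{n_1,n_1+2}$. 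This is your path ``down to $L_{n_1,n_1+1}$ and across''; the type 1 squares do reach $L_{n_1,n_1+1}$ from above.
\end{enumerate}
Subtracting the degree equation of the second relation from that of the first gives exactly the identity at $j=n_1+2$. This is the paper's bridge argument.
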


\begin{proof}
	The proof is structured in two parts: verifying the invariance of two arrow-degree sums within their respective index ranges, and subsequently proving a bridge identity connecting them.
	
	First, we claim that the number
	\[
	\sum\limits_{k=i}^{n_1} |x_{1,k}| + \sum\limits_{r=1}^{i-1} |y_{r,n_1+1}| - |u_{1,i}|
	\]
	is independent of the choice of $2 \leq i \leq n_1$. Indeed, it suffices to show that the expression yields the same value for consecutive indices $i$ and $i+1$:
	\[
	\sum\limits_{k=i}^{n_1} |x_{1,k}| + \sum\limits_{r=1}^{i-1} |y_{r,n_1+1}| -|u_{1,i}| = \sum\limits_{k=i+1}^{n_1} |x_{1,k}| + \sum\limits_{r=1}^{i} |y_{r,n_1+1}| -|u_{1,i+1}|.
	\]
	By canceling the common terms on both sides, this equality is equivalent to
	\[
	|x_{1,i}| - |u_{1,i}| = |y_{i,n_1+1}| - |u_{1,i+1}|,
	\]
	which holds due to the \emph{Commutative square relations of type 3} defined in Proposition \ref{prop:symsquadg}, namely  $u_{1,i}y_{i,n_1+1} = x_{1,i}u_{1,i+1}$. This proves the claim. 
	
	Similarly, the fact that the number
	\[
	\sum\limits_{l=1}^{n_1} |y_{l,j}| - |u_{1,j}|
	\]
	remains constant for all $n_1+2 \leq j \leq n_1+n_2$ follows from an analogous inductive argument utilizing the \emph{Commutative square relations of type 2}.
	
	Next we claim that, upon specializing to $i=n_1$ and $j=n_1+2$, these two invariant numbers are related as follows:
	\begin{equation} \label{eq:bridge_logic}
		|x_{1,n_1}| + \sum\limits_{r=1}^{n_1-1} |y_{r,n_1+1}| - |u_{1,n_1}| + 1 = \sum\limits_{l=1}^{n_1} |y_{l,n_1+2}| - |u_{1,n_1+2}|.
	\end{equation}
	
	Indeed, by the \emph{Commutative square relations of type 4} in Proposition \ref{prop:symsquadg}, we have the relation $u_{1,n_1}z_{n_1,n_1+1} = x_{1,n_1}x_{1,n_1+1}u_{1,n_1+2}$. This imposes the following constraint on degrees:
	\[
	|u_{1,n_1}| - |u_{1,n_1+2}| = |x_{1,n_1}| + |x_{1,n_1+1}| - |z_{n_1,n_1+1}|.
	\]
	Recalling the differential relation $d(z_{n_1,n_1+1}) = x_{n_1,n_1+1}y_{n_1,n_1+2}$, we obtain $|z_{n_1,n_1+1}| = |x_{n_1,n_1+1}| + |y_{n_1,n_1+2}| - 1$. Substituting this into the degree relation yields
	\[
	|u_{1,n_1}| - |u_{1,n_1+2}| = |x_{1,n_1}| + |x_{1,n_1+1}| - |x_{n_1,n_1+1}| - |y_{n_1,n_1+2}| + 1.
	\]
	
	Substituting the expression for $|u_{1,n_1}| - |u_{1,n_1+2}|$ back into Equation \eqref{eq:bridge_logic}, the identity holds if and only if
	\[
	\sum\limits_{r=1}^{n_1-1} |y_{r,n_1+1}| + |x_{n_1,n_1+1}| = |x_{1,n_1+1}| + \sum\limits_{l=1}^{n_1-1} |y_{l,n_1+2}|.
	\]
	This final equality is precisely the degree balance required by the path relation 
	\[ 
	x_{1,n_1+1}y_{1,n_1+2}y_{2,n_1+2}\dots y_{n_1-1,n_1+2} = y_{1,n_1+1}y_{2,n_1+1}\dots y_{n_1-1,n_1+1}x_{n_1,n_1+1} 
	\]
	obtained by iteratively applying the Commutative square relations of type 1. This proves the claim. The case of $n_1=1$ is similar. 
\end{proof}

We next show that, up to derived equivalence, the grading on the partially wrapped Fukaya category $\mathcal{W}(\operatorname{Sym}^2(\Sigma), \Lambda^{(2)})$ is completely determined by a single parameter: the degree of the arrow $u_{1,i}$ in the quiver $\widetilde{Q}_{n_1, n_2}$. This reflects the fact that the set of possible gradings forms a torsor over $H^1(\operatorname{Sym}^2(\Sigma), \mathbb{Z}) \cong \mathbb{Z}$. 

To make this precise, let us introduce a specific grading on $\widetilde{\mathcal A}_{n_1,n_2}$ and denote the resulting dg algebra by   $\widetilde{\mathcal{A}}_{n_1,n_2}'$. That is, $\widetilde{\mathcal{A}}_{n_1,n_2}'$ and  $\widetilde{\mathcal{A}}_{n_1,n_2}$  differ in the grading. To distinguish its generators from those of the original algebra, we append a prime to the arrows of $\widetilde{\mathcal{A}}_{n_1,n_2}'$. That is, the arrows in $\widetilde{\mathcal{A}}_{n_1,n_2}'$ are denoted by $x_{i,j}'$, $y_{i,j}'$, and $u_{1,k}'$.

Specifically, we equip $\widetilde{\mathcal{A}}_{n_1,n_2}'$ with a $\mathbb{Z}$-grading defined as follows:
\begin{itemize}[leftmargin=*, label=--]
	\item $|x_{i,j}'| = 0$ and $|y_{i,j}'| = 0$ for all $1 \leq i < j \leq n_1+n_2$;
	\item $|u_{1,k}'| = m+1$ for $2 \leq k \leq n_1$, and $|u_{1,k}'| = m$ for $n_1+2 \leq k \leq n_1+n_2$.
\end{itemize}
Furthermore, the differential relation $d(z_{i,i+1}') = x_{i,i+1}'y_{i,i+2}'$ forces the degree $$|z_{i,i+1}'| = |x_{i,i+1}'| + |y_{i,i+2}'| - 1 = -1\quad \text{for $1\leq i \leq n_1+n_2-2$}.$$

The following proposition establishes that the symmetric square dg algebra $\widetilde{\mathcal{A}}_{n_1,n_2}$ with any $\mathbb{Z}$-grading is derived equivalent to $\widetilde{\mathcal{A}}_{n_1,n_2}'$ for some $m$.

\begin{proposition}\label{prop:grading_derived_equiv}
	For any given $\mathbb{Z}$-grading, the symmetric square dg algebra $\widetilde{\mathcal{A}}_{n_1,n_2}$ is derived equivalent to the given standard model $\widetilde{\mathcal{A}}_{n_1,n_2}'$ for some $m$.  
\end{proposition}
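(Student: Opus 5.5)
The plan is to realize the derived equivalence as an isomorphism of dg algebras given by \emph{regrading}. Concretely, I will twist by shifts of the indecomposable projectives (equivalently, of the generators $L_i\times L_j$). For each vertex $L_{i,j}$ of $\widetilde Q_{n_1,n_2}$ I choose an integer $s_{i,j}$ and replace the generator $P_{i,j}=e_{i,j}\widetilde{\mathcal A}_{n_1,n_2}$ by $P_{i,j}[s_{i,j}]$. The direct sum $\bigoplus P_{i,j}[s_{i,j}]$ is again a compact generator of the derived category, so its dg endomorphism algebra is derived equivalent to $\widetilde{\mathcal A}_{n_1,n_2}$. This algebra has the same underlying quiver, relations and differential. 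The only change is that each arrow $a\colon L_{i,j}\to L_{k,l}$ has degree $|a|+s_{k,l}-s_{i,j}$ (up to a sign convention). Relations and the differential are preserved because a shift changes all parallel paths by the same amount. So it suffices to choose the shifts $s$ so that the new degrees match those of $\widetilde{\mathcal A}_{n_1,n_2}'$, namely $0$ on all $x,y$ and $-1$ on all $z$, and then to read off $m$.

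First I kill the degrees of the $x$ and $y$ arrows. Consider the subquiver $\Gamma$ formed by the horizontal and vertical arrows. It is the connected staircase grid, in which every cycle in the underlying graph is generated by the unit squares, and each unit square carries a commutative square relation of type 1. That relation forces $|x_{i,j}|+|y_{i,j+1}|=|y_{i,j}|+|x_{i+1,j}|$, so the degree function on arrows of $\Gamma$ is a $1$-cocycle on the underlying graph. A cocycle that sums to zero around every generating cycle is a coboundary, so $|a|=t(a)$-potential minus $s(a)$-potential, and I choose $s$ to be that potential. The potential is unique up to an additive constant since $\Gamma$ is connected and contains all vertices. After this shift all $x,y$ have degree $0$. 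The differential gives $|z_{i,i+1}|=|x_{i,i+1}|+|y_{i,i+2}|-1=-1$ automatically.

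Next I show that the $u$-degrees take the required form. Type 3 relations with $|x|=|y|=0$ give $|u_{1,i}|=|u_{1,i+1}|$ for $2\le i\le n_1$. Type 2 relations give that $|u_{1,j}|$ is constant for $n_1+2\le j\le n_1+n_2$. Lemma~\ref{der invariant} then reduces to $-|u_{1,i}|+1=-|u_{1,j}|$, that is, $|u_{1,i}|=|u_{1,j}|+1$. Setting $m:=|u_{1,j}|$ reproduces exactly the grading of $\widetilde{\mathcal A}_{n_1,n_2}'$. When $n_1=1$ the first family is empty and only the constancy of $|u_{1,j}|$ is needed, which also follows from Lemma~\ref{der invariant}.

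The main obstacle is the first step: justifying that the degree cocycle on $\Gamma$ is a coboundary. This requires checking that the commutative squares of type 1 genuinely span the cycle space of the staircase-shaped grid, including at the boundary near the diagonal where some squares are missing. The diagonal vertices $L_{i,i+1}$ have no vertical arrow in, since $y_{i,j}$ requires $j\ge i+2$. So I will verify that the remaining bounded faces of the planar grid are exactly the type 1 squares, after which the argument is a simple Euler-characteristic and planarity check.
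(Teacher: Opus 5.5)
Your proposal is correct and follows the paper's proof. In both, you shift the indecomposable projectives so that every $x$ and $y$ arrow gets degree $0$ (which forces $|z_{i,i+1}|=-1$), and then use Lemma~\ref{der invariant} to read off $|u_{1,k}|=m+1$ or $m$. The only difference is that the paper writes the shifts explicitly as $s_{i,j}=-\sum_{k=2}^{j-1}|x_{1,k}|-\sum_{r=1}^{i-1}|y_{r,j}|$, i.e.\ the potential along the first row and then down the column, whereas you obtain them abstractly because the type~1 squares span the cycle space of the staircase grid. Your argument makes explicit the check the paper leaves implicit for the arrows $x_{i,j}$ with $i\ge 2$.
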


\begin{proof}
	We begin by fixing the integer parameter $m$ for our target standard model $\widetilde{\mathcal{A}}_{n_1,n_2}'$. Based on the arbitrary $\mathbb{Z}$-grading on $\widetilde{\mathcal{A}}_{n_1,n_2}$, we set
	\[
	m = |u_{1,n_1+2}| - \sum\limits_{l=1}^{n_1} |y_{l,n_1+2}|.
	\]
	
	Next, we construct a compact generator that forces the $x$ and $y$ arrows to be of degree zero. 
	For each vertex \(L_{i,j}\) of \(\widetilde{\mathcal A}_{n_1,n_2}\), with \(1\leq i<j\leq n_1+n_2\), let \(P_{i,j}\) denote the corresponding indecomposable projective left module. For vertices and projective modules, we use the convention \(L_{a,b}=L_{b,a}\) and \(P_{a,b}=P_{b,a}\) whenever \(a>b\). We define the complex of projective modules
	\[
	T = \bigoplus_{1\le i<j\le n_1+n_2} P_{i,j}\bigl[\,s_{i,j}\,\bigr],
	\]
	where the degree shift $s_{i,j}$ is specifically chosen as
	\[
	s_{i,j} = -\sum\limits_{k=2}^{j-1}|x_{1,k}| - \sum\limits_{r=1}^{i-1}|y_{r,j}|,
	\]
	with the standard convention that empty sums evaluate to zero. 
	
	A standard verification shows that $T$ is a compact generator of the derived category, i.e., $\operatorname{thick}(T) = \operatorname{per}(\widetilde{\mathcal{A}}_{n_1,n_2})$. By our explicit choice of shifts $s_{i,j}$, the induced morphisms in the endomorphism dg algebra,
	\[
	x_{i,j}'\colon P_{i,j}[s_{i,j}] \longrightarrow P_{i,j+1}[s_{i,j+1}],\qquad
	y_{i,j}'\colon P_{i,j}[s_{i,j}] \longrightarrow P_{i+1,j}[s_{i+1,j}],
	\]
	are forced to have degree $0$. Consequently, the differential relation forces the induced morphisms $z_{i,i+1}' \colon P_{i,i+1}[s_{i,i+1}] \longrightarrow P_{i+1,i+2}[s_{i+1,i+2}]$ to have degree $-1$. 
	
	It remains to evaluate the degrees of the induced morphisms $u_{1,i}'$ for $i\in[n_1+n_2]\setminus\{n_1+1\}$. In the original algebra $\widetilde{\mathcal{A}}_{n_1,n_2}$, the arrow $u_{1,i}$ corresponds to an element
	\[
	u_{1,i} \in \Hom(P_{1,i}, P_{i,n_1+1}[|u_{1,i}|]).
	\]
	Passing to the shifted modules $P_{1,i}[s_{1,i}]$ and $P_{i,n_1+1}[s_{i,n_1+1}]$, the induced morphism $u_{1,i}'$ lives in the shifted Hom-space:
	\[
	u_{1,i}' \in \Hom\bigl(P_{1,i}[s_{1,i}], P_{i,n_1+1}[s_{i,n_1+1}][|u_{1,i}| + s_{i,n_1+1} - s_{1,i}]\bigr).
	\]
	Substituting the definitions of the shifts $s_{i,n_1+1}$ and $s_{1,i}$, the new degree of $u_{1,i}'$ expands to:
	\[
	|u_{1,i}| + s_{i,n_1+1} - s_{1,i} = 
	\begin{cases}
		|u_{1,i}| - \big(\sum\limits_{k=i}^{n_1} |x_{1,k}| + \sum\limits_{r=1}^{i-1} |y_{r,n_1+1}|\big), & \text{if } 2 \leq i \leq n_1, \\[1ex]
		|u_{1,i}| - \sum\limits_{l=1}^{n_1} |y_{l,i}|, & \text{if } n_1+2 \leq i \leq n_1+n_2.
	\end{cases}
	\]
	By the grading identity established in Lemma \ref{der invariant}, these expressions evaluate to the degrees prescribed by the standard model:
	\[
	|u_{1,i}| + s_{i,n_1+1} - s_{1,i} = 
	\begin{cases}
		m+1, & \text{if } 2 \leq i \leq n_1, \\[1ex]
		m, & \text{if } n_1+2 \leq i \leq n_1+n_2.
	\end{cases}
	\]
	Thus, the induced morphisms $u_{1,i}'$ in the endomorphism algebra $\mathcal{E}nd_{\widetilde{\mathcal{A}}_{n_1,n_2}}(T)$ have exactly the degrees of the corresponding generators in the standard model \(\widetilde{\mathcal A}_{n_1,n_2}'\).
	
	Finally, since this construction simply shifts the projective modules, all algebraic relations of $\widetilde{\mathcal{A}}_{n_1,n_2}$ (the commutative square, zig-zag, and monomial relations) are strictly preserved. The images of the irreducible morphisms satisfy exactly the defining relations of $\widetilde{\mathcal{A}}_{n_1,n_2}'$. Consequently, we obtain an isomorphism of dg algebras
	\[
	\mathcal{E}nd_{\widetilde{\mathcal{A}}_{n_1,n_2}}(T) \cong \widetilde{\mathcal{A}}_{n_1,n_2}',
	\]
	which establishes the derived equivalence between $\widetilde{\mathcal{A}}_{n_1,n_2}$ and $\widetilde{\mathcal{A}}_{n_1,n_2}'$.
\end{proof}

The cohomology algebra of the symmetric square dg algebra $\widetilde{\mathcal{A}}_{n_1,n_2}$ is denoted by $\mathcal{A}_{n_1,n_2}$, called the \textit{symmetric square algebra}. Before giving the quiver presentation of \(\mathcal{A}_{n_1,n_2}\), we record
the following elementary observation.  It explains why the diagonal generators
\(z_{i,i+1}\) disappear after passing to cohomology and why the relations
\(x_{i,i+1}y_{i,i+2}=0\) appear in the cohomology algebra.

\begin{lemma}\label{lem:cohomology-generators}
	The algebra \(\mathcal{A}_{n_1,n_2}=H^\ast(\widetilde{\mathcal{A}}_{n_1,n_2})\)
	is generated by the images of the arrows $x_{i,j}$, $y_{i,j}$ and $u_{1,i}$ in cohomology.  Moreover, the differential in
	\(\widetilde{\mathcal{A}}_{n_1,n_2}\) imposes precisely the additional relations $x_{i,i+1}y_{i,i+2}=0$ for $1\leq i\leq n_1+n_2-2$, among these generators.
\end{lemma}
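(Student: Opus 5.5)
Using Proposition~\ref{prop:symsquadg}, I identify $\widetilde{\mathcal A}_{n_1,n_2}$ with $\Bbbk\widetilde Q_{n_1,n_2}/\widetilde I_{n_1,n_2}$ and split it by path type. The differential vanishes on $x$, $y$, $u$ and sends $z_{i,i+1}\mapsto x_{i,i+1}y_{i,i+2}$. Let $B\subset\widetilde{\mathcal A}_{n_1,n_2}$ be the subspace spanned by (images of) paths containing no arrow $z$. Let $C$ be the span of paths containing at least one $z$. These give a direct sum of vector spaces, because all relations in $\widetilde I_{n_1,n_2}$ are homogeneous in the number of $z$'s: type~4 and the zig-zag relations each have one $z$ on both sides, and monomial relations are monomials. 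Hence $\widetilde{\mathcal A}_{n_1,n_2}=\bigoplus_{k\ge0}\widetilde{\mathcal A}^{(k)}$, graded by the number $k$ of $z$'s, and $d$ lowers $k$ by one. Since $z_{i,i+1}z_{i+1,i+2}=0$, and $z$'s can only sit at diagonal positions, only $k\in\{0,1\}$ should contribute beyond trivial cases; I will check this via the reduction system, which gives a basis of irreducible paths.

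\textbf{Step 1 (cocycles).} I will show that every cocycle is cohomologous to an element of $\widetilde{\mathcal A}^{(0)}=B$, which gives the generation claim. The dimension table in the proof of Proposition~\ref{prop:symsquadg} gives a strand-diagram description of a basis. A basis element with a $z$ corresponds to a strand diagram with a crossing: for example, $\alpha_i\alpha_{i+1}\otimes\mathrm{id}$ has strands $i\to i+2$ and $i+1\to i+1$ crossing. The differential on the $k=1$ part is the resolution map. I plan to show that $d\colon\widetilde{\mathcal A}^{(1)}\to\widetilde{\mathcal A}^{(0)}$ is injective on each Hom-space $e\,\widetilde{\mathcal A}^{(1)}e'$. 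Concretely, I will match each irreducible path containing $z_{i,i+1}$ with the irreducible path obtained by replacing $z_{i,i+1}$ by $x_{i,i+1}y_{i,i+2}$. This replacement should be a reduced-form leading term, and distinct $z$-paths give distinct leading terms. Then $H^*$ is $\widetilde{\mathcal A}^{(0)}/d(\widetilde{\mathcal A}^{(1)})$, because $\widetilde{\mathcal A}^{(0)}$ consists of cocycles and no nonzero cocycle lies in $\widetilde{\mathcal A}^{(1)}$. In particular, $H^*$ is generated by the classes of $x$, $y$, $u$, since $\widetilde{\mathcal A}^{(0)}$ is spanned by products of these.

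\textbf{Step 2 (relations).} The image $d(\widetilde{\mathcal A}^{(1)})$ is the span of $d(pz_{i,i+1}q)=\pm p\,x_{i,i+1}y_{i,i+2}\,q$ for paths $p$, $q$ in $x$, $y$, $u$. This is exactly the two-sided ideal of $\widetilde{\mathcal A}^{(0)}$ generated by the elements $x_{i,i+1}y_{i,i+2}$, with the convention that products involving $z$ are excluded. Hence $\mathcal A_{n_1,n_2}$ is the quotient of the algebra $\widetilde{\mathcal A}^{(0)}$ by these relations, and the additional relations are precisely $x_{i,i+1}y_{i,i+2}=0$.

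There is a caveat: $\widetilde{\mathcal A}^{(0)}$ is a subalgebra, but its presentation by $x$, $y$, $u$ must inherit only the $z$-free relations. Relations such as type~4 and the zig-zag relations, which contain $z$, become consequences modulo the image. For instance, type~4 implies $x_{1,n_1}x_{1,n_1+1}u_{1,n_1+2}\in$ image once $z$ is replaced. I must check that these add nothing new. For type~4, I will show $x_{1,n_1}x_{1,n_1+1}u_{1,n_1+2}$ is the coboundary of $u_{1,n_1}z_{n_1,n_1+1}$ shifted appropriately; more precisely, I will verify that it already lies in the ideal generated by $x_{i,i+1}y_{i,i+2}$ via type~2/3 commutations. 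This bookkeeping, together with the injectivity in Step~1, is the main obstacle. I expect to handle it by a dimension count using Table~\ref{tab:dimensions}: compute $\dim H^*$ between vertices as $\dim\widetilde{\mathcal A}^{(0)}-\dim\widetilde{\mathcal A}^{(1)}$, and compare it with the dimension of the proposed quotient.
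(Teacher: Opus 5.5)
\textbf{Gap: type 4 is not homogeneous in the number of $z$'s.} Your decomposition $\widetilde{\mathcal A}_{n_1,n_2}=B\oplus C$ assumes every relation has the same number of $z$'s on both sides. The type~4 relation does not. For $n_1\ge 2$ it reads $u_{1,n_1}z_{n_1,n_1+1}=x_{1,n_1}x_{1,n_1+1}u_{1,n_1+2}$, and the right-hand side contains no $z$. So $B\cap C\neq 0$ and the $z$-count is not a grading. Step~1 then fails on exactly this element:
\begin{itemize}
\item $u_{1,n_1}z_{n_1,n_1+1}$ is a nonzero cocycle, since its differential is $\pm u_{1,n_1}x_{n_1,n_1+1}y_{n_1,n_1+2}=0$ by the monomial relation $u_{1,n_1}x_{n_1,n_1+1}=0$;
\item your substitution $z_{n_1,n_1+1}\mapsto x_{n_1,n_1+1}y_{n_1,n_1+2}$ sends it to $0$, so $d$ is not injective on $C$.
\end{itemize}

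\textbf{The repair in your caveat goes the wrong way.} The element $x_{1,n_1}x_{1,n_1+1}u_{1,n_1+2}$ is not a coboundary. Nor does it lie in the ideal of the $z$-free part generated by the $x_{i,i+1}y_{i,i+2}$. Three independent checks:
\begin{itemize}
\item All relations of $\widetilde I_{n_1,n_2}$ and $d$ preserve the number of $u$-arrows. The one-$u$ part of $e_{1,n_1}\widetilde{\mathcal A}_{n_1,n_2}e_{n_1+1,n_1+2}$ is spanned by this single closed element; the only other one-$u$ path, $u_{1,n_1}x_{n_1,n_1+1}y_{n_1,n_1+2}$, is zero. So nothing can map onto it under $d$.
\item It is irreducible for the reduction system $R$ of Proposition~\ref{reduction system}, hence nonzero in $\Bbbk Q_{n_1,n_2}/I_{n_1,n_2}$.
\item It is the nonzero Massey product of Corollary~\ref{Massey product}, i.e.\ the output of $m_3$.
\end{itemize}
If you proved it lies in the image, you would be adding the relation $x_{1,n_1}x_{1,n_1+1}u_{1,n_1+2}=0$, which contradicts the lemma.

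\textbf{What is needed instead.} As in the paper's proof, apply type~4 as a rewriting rule $u_{1,n_1}z_{n_1,n_1+1}\to x_{1,n_1}x_{1,n_1+1}u_{1,n_1+2}$, together with $u_{1,n_1+2}z_{n_1+1,n_1+2}=0$, before counting $z$'s. Equivalently, count $z$'s in the irreducible paths of a reduction system oriented this way. Then the $z$-free part consists of cocycles and keeps this class. The real checks are:
\begin{itemize}
\item $d$ is injective on the remaining one-$z$ irreducible paths;
\item type~4 forces no new $z$-free relation beyond those in $I_{n_1,n_2}$.
\end{itemize}
Your dimension comparison with Table~\ref{tab:dimensions} is a sound way to carry these out. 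For the pair $L_{1,n_1}$, $L_{n_1+1,n_1+2}$ it gives a $3$-dimensional chain space with $1$-dimensional cohomology, spanned by exactly the class you intended to kill.
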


\begin{proof}
	The differential of \(\widetilde{\mathcal{A}}_{n_1,n_2}\) is zero on all
	arrows of the forms \(x_{i,j}\), \(y_{i,j}\), and \(u_{1,i}\).  Hence these
	arrows determine elements of the cohomology algebra
	\(\mathcal{A}_{n_1,n_2}\).
	
	The only arrows on which the differential is nonzero are the diagonal arrows
	\(z_{i,i+1}\).  They satisfy
	\(
	d(z_{i,i+1})=x_{i,i+1}y_{i,i+2}.
	\)
	Therefore the path \(x_{i,i+1}y_{i,i+2}\) is zero in
	\(\mathcal{A}_{n_1,n_2}\).  This gives the relations
	\(
	x_{i,i+1}y_{i,i+2}=0.
	\)
	
	It remains to explain why the diagonal arrows do not contribute additional cohomology generators. In this argument, we only use the following local simplifications. 
	
	First, by the type \(4\) relation, any path containing the subpath \(u_{1,n_1}z_{n_1,n_1+1}\) is rewritten as a path containing the subpath \(x_{1,n_1}x_{1,n_1+1}u_{1,n_1+2}.\) Thus such a path is no longer counted as a path containing a diagonal arrow.
	
	Second, any path containing the subpath \(u_{1,n_1+2}z_{n_1+1,n_1+2}\) is zero by the defining relations.
	
	We now consider paths which still contain diagonal arrows after these two
	simplifications. If such a path contains two or more diagonal arrows, then the
	defining relations allow us to move two of them next to each other. Hence the
	path contains a subpath of the form \(z_{r,r+1}z_{r+1,r+2}\)
	for some \(r\), and is therefore zero by the defining relations. Thus it is
	enough to consider paths containing exactly one diagonal arrow \(z_{i,i+1}\).
	
	For fixed vertices \(L_{i,j}\) and \(L_{k,\ell}\), the space spanned by such paths
	from \(L_{i,j}\) to \(L_{k,\ell}\) is either zero- or one-dimensional. Let \(p=p_1z_{i,i+1}p_2\) be a nonzero such path, where \(p_1\) and \(p_2\) contain no diagonal arrows. Since the differential is zero on all arrows except the diagonal arrows, we have \(d(p)=(-1)^{|p_1|} p_1x_{i,i+1}y_{i,i+2}p_2.\) By the one-dimensionality statement above, this term cannot cancel with the
	differential of another path containing a diagonal arrow. Hence no nonzero
	cohomology class is represented by a path containing a diagonal arrow after the
	two simplifications above. Consequently, the diagonal arrows \(z_{i,i+1}\) do
	not contribute additional cohomology generators; their only effect in
	cohomology is to impose $x_{i,i+1}y_{i,i+2}=0.$
\end{proof}

Lemma \ref{lem:cohomology-generators} immediately gives the following representation of
\(\mathcal A_{n_1,n_2}\) as a path algebra of a quiver modulo an ideal of
relations.

\begin{prop}\label{prop:cohomologysymsqu}
	The symmetric square algebra $\mathcal{A}_{n_1,n_2}$ is isomorphic to the path algebra of the quiver $Q_{n_1,n_2}$ (depicted in Figure~\ref{quiver A_{n1,n2}}) modulo the ideal of relations $I_{n_1,n_2}$, defined as follows:
	\begin{enumerate}[label=(\arabic*), leftmargin=*]
		\item The arrows of $Q_{n_1,n_2}$ consist of
		\begin{itemize}[label=--]
			\item \textsc{Horizontal arrows}: 
			\(
			x_{i,j}= \operatorname{id}_{L_i} \otimes \alpha_j 
			\colon L_{i,j} \longrightarrow L_{i,j+1}
			\) for \(1\leq i<j\leq n_1+n_2-1\);
			\item \textsc{Vertical arrows}: 
			\(
			y_{i,j}= \alpha_i \otimes \operatorname{id}_{L_j} 
			\colon  L_{i,j} \longrightarrow L_{i+1,j}
			\) for \(1\leq i\leq n_1+n_2-2\) and \(i+2\leq j\leq n_1+n_2\);
			\item \textsc{Long blue arrows}: 
			\(
			u_{1,i}= \beta \otimes \operatorname{id}_{L_i} 
			\colon L_{1,i} \longrightarrow L_{i,n_1+1}
			\) for \(2\leq i\leq n_1+n_2\) and \(i\neq n_1+1\), where \(L_{i,n_1+1}\) is identified with \(L_{n_1+1,i}\) whenever \(i>n_1+1\).
		\end{itemize}
		
		\item The ideal $I_{n_1,n_2}$ is generated by the following relations. We use the same convention as above: a displayed relation involving an undefined path is omitted.
		\begin{itemize}[label=--]
			\item \textsc{Commutative square relations of type 1}: $x_{i,j} y_{i,j+1} = y_{i,j} x_{i+1,j}$ for $1\leq i<j-1 \leq n_1+n_2-2$; see Figure \ref{fig:commutative-square-relations};
			\item \textsc{Commutative square relations of type 2}: $u_{1,i} x_{n_1+1,i}=x_{1,i} u_{1,i+1}$ for $n_1+2\leq i\leq  n_1+n_2-1$; see Figure \ref{fig:commutative-square-relations};
			\item \textsc{Commutative square relations of type 3}: $u_{1,i} y_{i,n_1+1}=x_{1,i}u_{1,i+1}$ for $1<i<n_1$; see Figure \ref{fig:commutative-square-relations};
			\item \textsc{Monomial relations}: $u_{1,i} x_{i,n_1+1}=0$ for $1< i\leq n_1$; $u_{1,n_1+i} y_{n_1+1,n_1+i}=0$ for $3\leq i \leq n_2$; and $x_{i,i+1} y_{i,i+2} = 0$ for $1\leq i \leq n_1+n_2-2$; see Figure \ref{fig:monomial-relations}.
		\end{itemize}
	\end{enumerate}

	\begin{figure}[htbp]
		\centering
		\begin{subfigure}[b]{0.40\textwidth}
			\centering
			\begin{tikzcd}[ampersand replacement=\&, row sep=0.6cm, column sep=0.6cm]
				L_{1,i} \ar[rrd, blue,"u_{1,i}" {xshift=2pt, yshift=0pt}, swap]  \& \& \& \\
				\&  \& L_{i,n_1+1} \ar[r,"x_{i,n_1+1}" ] \& L_{i,n_1+2} 
			\end{tikzcd}
		\end{subfigure}
		\begin{subfigure}[b]{0.25\textwidth}
			\centering
			\begin{tikzcd}[ampersand replacement=\&, row sep=0.95cm, column sep=0.6cm]
				L_{1,n_1+i} \ar[dd, blue, bend right=40, "u_{1,n_1+i}"]  \\
				\&  \\
				L_{n_1+1,n_1+i} \ar[d, "y_{n_1+1,n_1+i}"] \\
				L_{n_1+2,n_1+i}				
			\end{tikzcd}
		\end{subfigure}\hfill
		\begin{subfigure}[b]{0.32\textwidth}
			\centering
			\begin{tikzcd}[ampersand replacement=\&, row sep=large, column sep=1cm]
				L_{i,j} \ar[r,"x_{i,j}"]  \& L_{i,j+1} \ar[d,"y_{i,j+1}"] \\
				\& L_{i+1,j+1}
			\end{tikzcd}
		\end{subfigure}\hfill
		\caption{Diagrams of monomial relations.}
		\label{fig:monomial-relations}
	\end{figure}
\end{prop}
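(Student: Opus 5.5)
We would construct the natural algebra map \(\psi\colon \Bbbk Q_{n_1,n_2}\to \mathcal{A}_{n_1,n_2}\) sending each arrow \(x_{i,j}, y_{i,j}, u_{1,i}\) to the cohomology class of the same-named arrow of \(\widetilde{\mathcal A}_{n_1,n_2}\). These arrows are cocycles, since \(d\) vanishes on them, and by Lemma~\ref{lem:cohomology-generators} their classes generate \(\mathcal{A}_{n_1,n_2}\), so \(\psi\) is surjective.

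Next we would check that \(\psi\) kills \(I_{n_1,n_2}\). The commutative square relations of types 1--3 and the monomial relations \(u_{1,i}x_{i,n_1+1}=0\) and \(u_{1,n_1+i}y_{n_1+1,n_1+i}=0\) already hold at the chain level in \(\widetilde{\mathcal A}_{n_1,n_2}\) by Proposition~\ref{prop:symsquadg}. The relation \(x_{i,i+1}y_{i,i+2}=0\) holds because \(x_{i,i+1}y_{i,i+2}=d(z_{i,i+1})\). The type 4 relation, the zig-zag relations and the relations \(zz=0\), \(u_{1,n_1+2}z_{n_1+1,n_1+2}=0\) involve \(z\) and impose nothing new on the \(x,y,u\)-subalgebra. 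We would note, though, that type 4 rewrites \(x_{1,n_1}x_{1,n_1+1}u_{1,n_1+2}\) as \(u_{1,n_1}z_{n_1,n_1+1}\). That expression is not a boundary, but it is also not in the span of the cycles which are paths in \(x,y,u\) only. So \(\psi\) descends to a surjection \(\bar\psi\colon \Bbbk Q_{n_1,n_2}/I_{n_1,n_2}\to\mathcal{A}_{n_1,n_2}\).

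For injectivity we would compare dimensions vertex pair by vertex pair. On one side, we would compute \(\dim e_{i,j}\mathcal{A}_{n_1,n_2}e_{k,\ell}\) as \(\dim Z-\dim B\) in \(\widetilde{\mathcal A}_{n_1,n_2}\). To do this we would split the Hom space into its \(z\)-free part (degree \(0\) in the number of \(z\)'s) and its part with exactly one \(z\); parts with two or more \(z\)'s vanish by the argument in Lemma~\ref{lem:cohomology-generators}. The argument of that lemma shows that \(d\) is injective on the one-\(z\) part, after the two simplifications. Hence \(H^*\) equals the \(z\)-free part modulo the images \(p_1x_{i,i+1}y_{i,i+2}p_2\), and its dimension is the entry of Table~\ref{tab:dimensions} minus the number of independent one-\(z\) paths. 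On the other side, we would give a reduction system for \(I_{n_1,n_2}\) in the sense of Definition~\ref{def reduction system}. We would orient the commutative squares as \(y_{i,j}x_{i+1,j}\mapsto x_{i,j}y_{i,j+1}\), and similarly for \(u\) (rewriting toward \(x\) first), and add the monomial relations. The diamond condition would then be verified on the overlaps, which are chains of squares ending at a monomial; this gives a basis \(\Irr_S\) that we would count between each pair of vertices.

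The main obstacle is this counting and its agreement. Concretely, a basis element of \(\widetilde{\mathcal A}_{n_1,n_2}\) corresponds to a pair of Reeb chords \(f\otimes g\). The \(z\)-free cycles correspond to pairs whose strand diagram has no crossing, while pairs with a crossing are hit by, or map injectively under, \(d\). So we expect \(\mathcal{A}_{n_1,n_2}\) to have a basis of crossingless two-strand diagrams. We would first check this against \(\Irr_S\) in the cases (a)--(e) of Table~\ref{tab:dimensions}, where the dimension of Hom is \(1\), \(2\) or \(3\). The delicate cases are (d) and (e), involving \(\beta\) and the type 4 identification. There we must verify that \(x_{1,n_1}x_{1,n_1+1}u_{1,n_1+2}\) survives in cohomology exactly once, since it is equal but not cohomologous to zero. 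Once the counts match, \(\bar\psi\) is a surjection between spaces of equal finite dimension, hence an isomorphism.
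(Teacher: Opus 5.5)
Your surjectivity argument and your check that $I_{n_1,n_2}$ is killed agree with the paper. Replacing the paper's injectivity step, which only appeals to Lemma~\ref{lem:cohomology-generators}, by a vertex-by-vertex dimension count is a sound and more explicit strategy. As you set it up, however, the count fails.

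You correctly say that $H^*$ is the $z$-free part modulo the images $p_1x_{i,i+1}y_{i,i+2}p_2$. But you then take its dimension to be the entry of Table~\ref{tab:dimensions} minus the number of one-$z$ paths, and that is the dimension of the cycles, not of the cohomology. Since $d$ is injective on the one-$z$ part, the boundaries have the same dimension as the one-$z$ part. So the correct count is the Table entry minus \emph{twice} the number of one-$z$ paths. For the same reason, $\mathcal A_{n_1,n_2}$ does not have a basis of all crossingless diagrams, only of those that are not the resolution of a one-crossing diagram.

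For example, for $n_1\geq 3$ the Table gives $2$ from $L_{1,2}$ to $L_{2,3}$. The two elements are the crossingless $x_{1,2}y_{1,3}=\alpha_1\otimes\alpha_2$ and $z_{1,2}=\alpha_1\alpha_2\otimes\operatorname{id}_{L_2}$. Your formula gives $1$, while both $\mathcal A_{n_1,n_2}$ and $\Bbbk Q_{n_1,n_2}/I_{n_1,n_2}$ are zero there. Likewise, from $L_{1,n_1}$ to $L_{n_1+1,n_1+2}$ you would get $2$ instead of $1$; the survivor is the type~4 element you flag.

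Second, orienting the type~1 squares as $y_{i,j}x_{i+1,j}\mapsto x_{i,j}y_{i,j+1}$, together with $x_{i,i+1}y_{i,i+2}\mapsto 0$, violates the diamond condition. The overlap $y_{i,i+2}x_{i+1,i+2}y_{i+1,i+3}$ reduces to $0$ one way and to $x_{i,i+2}y_{i,i+3}y_{i+1,i+3}$ the other. That path contains no tip, so $\Irr_S$ would count it (e.g.\ $x_{1,3}y_{1,4}y_{2,4}$), although it is zero in $\Bbbk Q_{n_1,n_2}/I_{n_1,n_2}$.

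You would have to complete the system with longer monomial tips. It is simpler to use the opposite orientation $x_{i,j}y_{i,j+1}\mapsto y_{i,j}x_{i+1,j}$ from Proposition~\ref{reduction system}. That proof only uses the generators of $I_{n_1,n_2}$, so citing it is not circular, and its only ambiguities are the paths in $S_3$, which resolve.

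Finally, your remark that $u_{1,n_1}z_{n_1,n_1+1}$ is not in the span of the $x,y,u$-cycles is false: it equals the $x,y,u$-path $x_{1,n_1}x_{1,n_1+1}u_{1,n_1+2}$. What matters is only that this element is a cycle outside $\operatorname{Im} d$.

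With these corrections, your dimension comparison is a legitimate way to make the paper's injectivity step rigorous.
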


\begin{figure}[htbp]
	\centering
	\begin{tikzcd}[
		column sep=0.5cm, 
		row sep=0.7cm,
		every matrix/.append style={name=mycd}
		]
		\text{\scriptsize $L_{1,2}$} \ar[r] \ar[blue,drrrr,bend left=1]& 
		\text{\scriptsize $L_{1,3}$} \ar[r] \ar[d]\ar[blue, ddrrr]  & 
		\cdots \ar[r] & 
		\text{\scriptsize $L_{1,n_1}$} \ar[d] \ar[r] \ar[blue,ddddr,bend right=10] & 
		\text{\scriptsize $L_{1,n_1+1}$} \ar[d]\ar[r] &
		\text{\scriptsize $L_{1,n_1+2}$} \ar[d]\ar[r]\ar[blue,ddddd,bend right=40]&\cdots\ar[r]\ar[blue,ddddd,bend right=40]&
		\text{\scriptsize $L_{1,n_1+n_2}$} \ar[d]\ar[blue,ddddd,bend right=40]\\
		& 
		\text{\scriptsize $L_{2,3}$} \ar[r] & 
		\cdots \ar[r] & 
		\text{\scriptsize $L_{2,n_1}$} \ar[d] \ar[r] & 
		\text{\scriptsize $L_{2,n_1+1}$} \ar[d] \ar[r] &
		\text{\scriptsize $L_{2,n_1+2}$} \ar[d]\ar[r]&\cdots\ar[r]&
		\text{\scriptsize $L_{2,n_1+n_2}$} \ar[d]\\
		&&&
		\vdots \ar[d]  & 
		\vdots \ar[d] &\vdots \ar[d]& & \vdots \ar[d]\\
		&&&
		\text{\scriptsize $L_{n_1-1,n_1}$} \ar[r] & 
		\text{\scriptsize $L_{n_1-1,n_1+1}$} \ar[d] \ar[r]&
		\text{\scriptsize $L_{n_1-1,n_1+2}$}\ar[r]\ar[d]&\cdots\ar[r]&
		\text{\scriptsize $L_{n_1-1,n_1+n_2}$}\ar[d]\\
		&&&&
		\text{\scriptsize $L_{n_1,n_1+1}$}\ar[r]&
		\text{\scriptsize $L_{n_1,n_1+2}$}\ar[r]\ar[d]&\cdots\ar[r]&
		\text{\scriptsize $L_{n_1,n_1+n_2}$}\ar[d]\\
		&&&&&
		\text{\scriptsize $L_{n_1+1,n_1+2}$}\ar[r]&\cdots\ar[r]&
		\text{\scriptsize $L_{n_1+1,n_1+n_2}$}\ar[d]\\
		&&&&&&
		&\vdots \ar[d] \\
		&&&&&&&
		\text{\scriptsize $L_{n_1+n_2-1,n_1+n_2}$}
	\end{tikzcd}
	\captionsetup{justification=centering}
	\caption{Quiver $Q_{n_1,n_2}$ of the symmetric square algebra $\mathcal{A}_{n_1,n_2}$, where the horizontal and vertical arrows are $x$ and $y$, the blue and red ones are $u$ and $z$.}
	\label{quiver A_{n1,n2}}
\end{figure}

\begin{proof}
	By Lemma~\ref{lem:cohomology-generators}, the cohomology algebra
	\(\mathcal{A}_{n_1,n_2}\) is generated by the images of the horizontal arrows
	\(x_{i,j}\), the vertical arrows \(y_{i,j}\), and the long blue arrows
	\(u_{1,i}\).  These are precisely the arrows of the quiver
	\(Q_{n_1,n_2}\).
	
	The commutative square relations of types \(1\), \(2\), and \(3\), as well as
	the monomial relations involving the long blue arrows \(u_{1,i}\), already hold in
	the dg algebra \(\widetilde{\mathcal A}_{n_1,n_2}\).  Hence they also hold in
	\(\mathcal{A}_{n_1,n_2}\).  In addition, Lemma~\ref{lem:cohomology-generators}
	shows that the differential contributes the relations
	\[
	x_{i,i+1}y_{i,i+2}=0,
	\qquad
	1\leq i\leq n_1+n_2-2.
	\]
	Therefore all generators of \(I_{n_1,n_2}\) hold in
	\(\mathcal{A}_{n_1,n_2}\).  We thus obtain a natural surjective homomorphism
	\[
	\Bbbk Q_{n_1,n_2}/I_{n_1,n_2}
	\longrightarrow
	\mathcal{A}_{n_1,n_2}.
	\]
	
	Conversely, Lemma~\ref{lem:cohomology-generators} says that the only new
	relations produced by passing from
	\(\widetilde{\mathcal A}_{n_1,n_2}\) to cohomology are precisely the relations \(x_{i,i+1}y_{i,i+2}=0.\) All other relations among the horizontal arrows \(x_{i,j}\), the vertical
	arrows \(y_{i,j}\), and the long arrows \(u_{1,i}\) are inherited from the
	defining relations of \(\widetilde{\mathcal A}_{n_1,n_2}\).  Namely, these are
	the commutative square relations of types \(1\), \(2\), and \(3\), together
	with the monomial relations involving the long blue arrows \(u_{1,i}\).  These are
	exactly the generators of \(I_{n_1,n_2}\).  Hence the above homomorphism is
	injective. As a result, we obtain an isomorphism of algebras $\mathcal{A}_{n_1,n_2}\cong \Bbbk Q_{n_1,n_2}/I_{n_1,n_2}.$
\end{proof}

Recall that a dg algebra \(A\) is called \textit{formal} if \(A\) and \(H^\ast(A)\), endowed with the induced product and zero differential, are connected by a zig-zag of dg algebra quasi-isomorphisms. We now use the above quiver descriptions of \(\widetilde{\mathcal A}_{n_1,n_2}\) and \(\mathcal A_{n_1,n_2}\) to study the formality of \(\widetilde{\mathcal A}_{n_1,n_2}\).

\begin{theorem}\label{thm:A_infty_qiso}
	Let $(\Sigma, \Lambda)$ be an annulus with stops of type
	$\widetilde{\mathbb{A}}_{n_1,n_2}$. Then the symmetric square dg algebra
	$\widetilde{\mathcal A}_{n_1,n_2}$ satisfies:
	\begin{enumerate}
		\item If $n_1=1$, then $\widetilde{\mathcal A}_{1,n_2}$ is formal.
		\item If $n_1\ge 2$, then $\widetilde{\mathcal A}_{n_1,n_2}$ admits a
		minimal $A_\infty$-model $(\mathcal A_{n_1,n_2}, m_2, m_3)$ with
		$m_r=0$ for all $r\ge4$ and the ternary operation \(m_3\) is defined on basis paths by
		\[
		m_3(p\,u_{1,n_1}, x_{n_1,n_1+1}, y_{n_1,n_1+2}\,q)
		=(-1)^{|pu_{1,n_1}|+1} p\, x_{1,n_1} x_{1,n_1+1} u_{1,n_1+2}\, q,
		\]
		for all composable paths $p,q$ where the right-hand side is nonzero, and $m_3$ vanishes on all other triples. 
	\end{enumerate}
\end{theorem}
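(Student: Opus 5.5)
We will obtain the minimal model by homological perturbation (Kadeishvili/Merkulov) applied to an explicit strong deformation retract of $\widetilde{\mathcal A}_{n_1,n_2}$ onto $\mathcal A_{n_1,n_2}$. The first step is a basis. Using the reduction system from the proof of Proposition~\ref{prop:symsquadg}, the paths of $\widetilde{\mathcal A}_{n_1,n_2}$ split into those without diagonal arrows and those with exactly one diagonal arrow. By the proof of Lemma~\ref{lem:cohomology-generators}, the only nonzero differentials pair a path $p_1 z_{i,i+1}p_2$ with $\pm p_1x_{i,i+1}y_{i,i+2}p_2$. This pairing is one-to-one on the relevant basis elements.

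We then construct the retract. Let $\iota\colon \mathcal A_{n_1,n_2}\to\widetilde{\mathcal A}_{n_1,n_2}$ send each irreducible basis path (no $z$, not containing $x_{i,i+1}y_{i,i+2}$) to the same path. Let $\pi$ be the projection. Define the homotopy $h$ on basis elements by
\[
h\bigl(p_1x_{i,i+1}y_{i,i+2}p_2\bigr)=(-1)^{|p_1|}p_1z_{i,i+1}p_2,
\]
and set $h=0$ on all other basis elements. We check that $\iota$ and $\pi$ are dg algebra maps compatible with the product up to the relations, that $d h+h d=\iota\pi-\mathrm{id}$, and that the side conditions $h^2=0$, $h\iota=0$ and $\pi h=0$ hold. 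The transferred operations are then
\[
m_r=\sum_T \pm\,\pi\circ\mu\circ(h\mu\otimes\cdots),
\]
summed over planar binary trees with internal edges decorated by $h$.

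The computation of $m_3$ is the main obstacle. We have $m_3(a,b,c)=\pi\bigl(h(ab)\,c\pm a\,h(bc)\bigr)$, so $m_3$ can be nonzero only if $ab$ or $bc$ contains a subpath $x_{i,i+1}y_{i,i+2}$ that is killed in cohomology. We also need the product of the resulting $z$-path with the remaining factor to have a $z$-free representative in $\widetilde{\mathcal A}$. The only relation that removes a diagonal arrow is the type~4 relation $u_{1,n_1}z_{n_1,n_1+1}=x_{1,n_1}x_{1,n_1+1}u_{1,n_1+2}$. All other products containing a $z$ are either zero, using $u_{1,n_1+2}z_{n_1+1,n_1+2}=0$ and $zz=0$, or still contain a $z$, and these project to $0$ under $\pi$. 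This forces $a=p\,u_{1,n_1}$, $b=x_{n_1,n_1+1}$ and $c=y_{n_1,n_1+2}q$, which yields the stated formula with the Koszul sign $(-1)^{|pu_{1,n_1}|+1}$ coming from $h$. The delicate points are the case-by-case verification that the other splittings ($ab$ versus $bc$, and zig-zag rewrites moving $z$) contribute nothing, and the bookkeeping of signs.

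Next we show $m_r=0$ for $r\ge4$. Every tree with at least two internal edges applies $h$ to an element in the image of $\mu\circ(h\otimes\mathrm{id})$ or $\mu\circ(\mathrm{id}\otimes h)$. Such an element is either a path containing a $z$ or, after the type~4 rewrite, a $z$-free path $p x_{1,n_1}x_{1,n_1+1}u_{1,n_1+2}q$. In the first case $h$ vanishes, since the image of $h$ contains a $z$ and $h^2=0$. In the second case we check that the path contains no factor $x_{i,i+1}y_{i,i+2}$, so $h$ kills it. In either case $m_r=0$.

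For $n_1=1$ the vertex $L_{1,n_1}$ and the arrow $u_{1,n_1}$ do not exist, and the type~4 relation is absent. The same analysis gives $m_r=0$ for all $r\ge3$. The minimal model is therefore $(\mathcal A_{1,n_2},m_2)$, and $\widetilde{\mathcal A}_{1,n_2}$ is formal.
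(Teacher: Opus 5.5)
Your route differs from the paper's. The paper does not transfer structure. For $n_1=1$ it observes that sending every $z_{i,i+1}$ to $0$ is already a dg algebra map $\widetilde{\mathcal A}_{1,n_2}\to\mathcal A_{1,n_2}$, because there is no type~4 relation. For $n_1\geq 2$ it writes down $m_3$ together with an $A_\infty$-morphism $F=(F_1,F_2)$, where $F_2(p\,x_{i,i+1},y_{i,i+2}\,q)=(-1)^{|p|+1}p\,z_{i,i+1}\,q$, and checks the morphism equations up to arity four. Your $h\mu(\iota\otimes\iota)$ is essentially that $F_2$. Perturbation has the advantage that $h$ is defined on every boundary, not only on products whose literal junction is $x_{i,i+1}\,|\,y_{i,i+2}$, and this turns out to matter.

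Two small corrections. First, $\iota$ and $\pi$ are only chain maps, which is all the transfer needs: $\iota(x_{i,i+1})\iota(y_{i,i+2})\neq 0$, and for $n_1\geq 2$ we have $\pi(u_{1,n_1}z_{n_1,n_1+1})\neq 0$. Second, with your sign for $h$ one gets $dh+hd=\mathrm{id}-\iota\pi$.

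The genuine gap is the sentence ``This forces $a=p\,u_{1,n_1}$, $b=x_{n_1,n_1+1}$ and $c=y_{n_1,n_1+2}q$''. You correctly find that $a$ ends in $u_{1,n_1}$ and that $b$ starts with $x_{n_1,n_1+1}$. However, you overlooked that the type~1 squares $x_{n_1,j}y_{n_1,j+1}=y_{n_1,j}x_{n_1+1,j}$ slide a $y$ back along row $n_1$.

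For $n_2\geq 3$ take $(a,b,c)=(u_{1,n_1},\,x_{n_1,n_1+1}x_{n_1,n_1+2},\,y_{n_1,n_1+3})$. Then $bc=x_{n_1,n_1+1}y_{n_1,n_1+2}x_{n_1+1,n_1+2}$ and $h(bc)=\pm z_{n_1,n_1+1}x_{n_1+1,n_1+2}$. The type~4 and type~2 relations give $a\,h(bc)=\pm x_{1,n_1}x_{1,n_1+1}x_{1,n_1+2}u_{1,n_1+3}$, which is a nonzero class. So your transferred $m_3$ does not vanish on this triple, contrary to the displayed formula.

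This is not an artefact of your choice of $h$. The Massey product $\langle[u_{1,n_1}],[x_{n_1,n_1+1}x_{n_1,n_1+2}],[y_{n_1,n_1+3}]\rangle$ has zero indeterminacy, for two reasons:
\begin{itemize}
\item every nontrivial class leaving $L_{n_1,n_1+1}$ begins with $x_{n_1,n_1+1}$, which $u_{1,n_1}$ kills;
\item there is no cohomology from $L_{1,n_1}$ to $L_{n_1,n_1+3}$, since every grid path between them is equivalent to one through $x_{n_1-1,n_1}y_{n_1-1,n_1+1}=0$.
\end{itemize}
The Massey product equals this nonzero class, and for any minimal model $m_3(a,b,c)$ lies in it up to sign.

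Equivalently, the displayed $m_3$ fails $[m_2,m_3]=0$ on $(u_{1,n_1},x_{n_1,n_1+1},x_{n_1,n_1+2},y_{n_1,n_1+3})$. The relation $x_{n_1,n_1+2}y_{n_1,n_1+3}=y_{n_1,n_1+2}x_{n_1+1,n_1+2}$ in $\mathcal A_{n_1,n_2}$ makes exactly one of the five terms nonzero. The paper's $F_2$ vanishes on $(x_{n_1,n_1+1}x_{n_1,n_1+2},y_{n_1,n_1+3})$, so its arity-two check fails there as well. Thus part~(2), read literally, is correct only when $n_2=2$.

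What your method actually proves is the following. $m_3$ is supported on the triples $(p\,u_{1,n_1},\,x_{n_1,n_1+1}x_{n_1,n_1+2}\cdots x_{n_1,k-1},\,y_{n_1,k}\,q)$ with $n_1+2\leq k\leq n_1+n_2$, with value $\pm p\,x_{1,n_1}x_{1,n_1+1}\cdots x_{1,k-1}u_{1,k}\,q$. The displayed formula is the case $k=n_1+2$. This corrected statement is what you should aim at.

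Your argument for $m_r=0$ with $r\geq 4$ also misses a tree. At $r=4$ the balanced tree contributes $\pi\bigl(h(ab)\,h(cd)\bigr)$, and here no $h$ is applied to an $h$-output, contrary to your claim about all trees with two internal edges. The term does vanish, but this needs saying. The product contains two diagonal arrows and at most one $u$-arrow, since no arrow returns to the first row. So at most one $z$ can be removed by the type~4 relation, and $\pi$ kills what remains.

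The rest of your argument survives the correction. The $z$-free outputs reduce, by the type~2 relations and $u_{1,k}y_{n_1+1,k}=0$, to irreducible paths $p\,x_{1,n_1}\cdots x_{1,l-1}u_{1,l}$. These represent cohomology classes and are therefore killed by $h$. The case $n_1=1$ is fine as you have it.
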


\begin{proof}
	First assume \(n_1=1\). We construct a dg algebra morphism
	\[
	\pi:\widetilde{\mathcal A}_{1,n_2}\longrightarrow
	\mathcal A_{1,n_2}=H^\ast(\widetilde{\mathcal A}_{1,n_2},d)
	\]
	by sending every horizontal arrow \(x_{i,j}\), vertical arrow \(y_{i,j}\), and
	long blue arrow \(u_{1,i}\) to its cohomology class, and by sending every
	diagonal arrow \(z_{i,i+1}\) to \(0\). This is compatible with the differential:
	all horizontal, vertical, and long blue arrows are closed, while
	\[
	\pi(d(z_{i,i+1}))
	=
	\pi(x_{i,i+1}y_{i,i+2})
	=
	0,
	\]
	because \(x_{i,i+1}y_{i,i+2}\) is zero in the cohomology algebra
	\(\mathcal A_{1,n_2}\). The defining relations of
	\(\widetilde{\mathcal A}_{1,n_2}\) are mapped to the defining relations of
	\(\mathcal A_{1,n_2}\). Hence \(\pi\) is a dg algebra morphism. By the
	description of the cohomology algebra, \(\pi\) induces an isomorphism on
	cohomology. Thus \(\widetilde{\mathcal A}_{1,n_2}\) is formal.
	
	Now assume \(n_1\ge 2\). We define a minimal \(A_\infty\)-structure on
	\(\mathcal A_{n_1,n_2}\) by taking \(m_2\) to be the path multiplication,
	\(m_3\) as in the statement, and \(m_r=0$ for $r\ge 4.\)
	The \(A_\infty\)-relations for \((\mathcal A_{n_1,n_2},m_2,m_3)\) reduce to two conditions: the Hochschild cocycle condition \([m_2,m_3]=0\) and the quadratic condition \([m_3,m_3]=0\). The sign in the definition of \(m_3\) is chosen so that the boundary terms in the Hochschild cocycle condition cancel. The relation \([m_3,m_3]=0\) follows from the support condition: the output of a nonzero \(m_3\) cannot be used as an input of another nonzero \(m_3\). Hence \((\mathcal A_{n_1,n_2},m_2,m_3)\) is a well-defined  \(A_\infty\)-algebra.
	
	We now construct an \(A_\infty\)-morphism
	\[
	F=(F_1,F_2,F_3,\dots):(\mathcal A_{n_1,n_2},m_2,m_3)
	\longrightarrow
	(\widetilde{\mathcal A}_{n_1,n_2},d,\mu_2).
	\]
	Let \(F_1\) be the linear section sending each cohomology class represented by a
	path involving only horizontal arrows, vertical arrows, and long blue arrows to
	the same path in \(\widetilde{\mathcal A}_{n_1,n_2}\). 
	Define \(F_2\) on basis paths by
	\[
	F_2(p\,x_{i,i+1},\,y_{i,i+2}\,q)
	=
	(-1)^{|p|+1}p\,z_{i,i+1}\,q
	\]
	for all composable paths \(p,q\), and set \(F_2=0\) on all other pairs of basis
	paths. Here \(|p|\) denotes the \(\mathbb Z\)-degree of the path \(p\). Finally, set $F_r=0$, for $r\ge 3$.
	
	We verify the \(A_\infty\)-morphism equations. 
	
	For arity one, the equation says that \(F_1\) sends the chosen cohomology representatives to closed elements in \(\widetilde{\mathcal A}_{n_1,n_2}\), which holds by construction.
	
	For arity two, the equation is
	\[
	F_1(m_2(a,b))-\mu_2(F_1(a),F_1(b))-dF_2(a,b)=0.
	\]
	The only nontrivial case is when $a=p x_{i,i+1}$ and $b=y_{i,i+2}q$.
	Thus \(m_2(a,b)=0\) in \(\mathcal A_{n_1,n_2}\), while
	\[
	\begin{aligned}
		dF_2(a,b)
		&=
		(-1)^{|p|+1}d(p\,z_{i,i+1}\,q) \\
		&=
		(-1)^{|p|+1}(-1)^{|p|}p\,d(z_{i,i+1})\,q \\
		&=
		-p\,x_{i,i+1}y_{i,i+2}\,q
		=
		-\mu_2(F_1(a),F_1(b)).
	\end{aligned}
	\]
	Thus the arity-two equation holds. In all other cases \(F_2=0\), and the equation reduces to the compatibility of \(F_1\) with path multiplication.
	
	For arity three, the equation is
	\[
	F_1(m_3(a,b,c))-F_2(a,m_2(b,c))+F_2(m_2(a,b),c)
	=
	(-1)^{|a|}\mu_2(F_1(a),F_2(b,c))-\mu_2(F_2(a,b),F_1(c)).
	\]
	The only essential case is
	\[
	(a,b,c)=
	(pu_{1,n_1},\,x_{n_1,n_1+1},\,y_{n_1,n_1+2}q).
	\]
	Here \(m_2(a,b)=m_2(b,c)=0\) and \(F_2(a,b)=0\), so the equation becomes
	\[
	F_1(m_3(a,b,c))=(-1)^{|a|}\mu_2(F_1(a),F_2(b,c)).
	\]
	By the definitions of \(m_3\) and \(F_2\), this equality is exactly
	\[
	(-1)^{|pu_{1,n_1}|+1} p\, x_{1,n_1} x_{1,n_1+1} u_{1,n_1+2}\, q
	=
	(-1)^{|pu_{1,n_1}|+1} p\,u_{1,n_1}z_{n_1,n_1+1}\,q,
	\]
	which is one of the defining relations of
	\(\widetilde{\mathcal A}_{n_1,n_2}\). All other triples are either ordinary associativity cases, or vanish by the support conditions of \(m_3\) and \(F_2\).
	
	It remains to check arity four. After substituting
	\[
	F_r=0\ (r\ge 3),\qquad m_r=0\ (r\ge 4),\qquad \mu_r=0\ (r\ge 3),
	\]
	all terms vanish except those involving \(m_3\), \(F_2\), and the product of
	two \(F_2\)-terms. Thus, for four composable inputs \(a,b,c,d\), it remains to
	verify
	\begin{align}\label{align:f2}
		F_2(m_3(a,b,c),d)+(-1)^{|a|}F_2(a,m_3(b,c,d))
		=
		(-1)^{|a|+|b|}\mu_2(F_2(a,b),F_2(c,d)).
	\end{align}
	
	By definition, \(F_2\) is nonzero only on pairs of the form $(p x_{i,i+1},\, y_{i,i+2}q),$ and \(m_3\) is nonzero only on triples of the form $(pu_{1,n_1},\,x_{n_1,n_1+1},\,y_{n_1,n_1+2}q).$ Thus, in the arity-four equation, all terms are automatically zero unless these input forms occur.

	We first show that the two terms on the left-hand side of \eqref{align:f2} vanish. Suppose that
	\(F_2(m_3(a,b,c),d)\) is potentially nonzero. Then the preceding \(m_3\)-term
	must produce a path of the form
	\[
	p\,x_{1,n_1}x_{1,n_1+1}u_{1,n_1+2}\,q.
	\]
	For this path to be the first input of a supported \(F_2\)-term, the path \(q\)
	must end in a horizontal arrow. Thus we may write \(q=q_1x_{i,i+1},\) and \( d=y_{i,i+2}d_1\)
	for some composable paths \(q_1,d_1\). Then, by the definition of \(F_2\),
	\[
	F_2(m_3(a,b,c),d)
	=
	(-1)^{|p\,x_{1,n_1}x_{1,n_1+1}u_{1,n_1+2}q_1|+|p u_{1,n_1}|}\,p\,x_{1,n_1}x_{1,n_1+1}u_{1,n_1+2}\,q_1z_{i,i+1}d_1.
	\]
	Using the zig-zag relations, the diagonal arrow can be moved to the left along
	the path \(q_1\), so that \(q_1z_{i,i+1}=z_{n_1+1,n_1+2}q_2\) for some composable path \(q_2\). Hence
	\[
	F_2(m_3(a,b,c),d)
	=
	(-1)^{|p\,x_{1,n_1}x_{1,n_1+1}u_{1,n_1+2}q_1|+|p u_{1,n_1}|}\,p\,x_{1,n_1}x_{1,n_1+1}u_{1,n_1+2}z_{n_1+1,n_1+2}q_2d_1
	=0
	\]
	by the monomial relation \(u_{1,n_1+2}z_{n_1+1,n_1+2}=0\). If \(q\) does not
	end in a horizontal arrow, then the term is zero by the support condition of
	\(F_2\).
	
	Similarly, \(F_2(a,m_3(b,c,d))\) can be nonzero only if the path produced by
	\(m_3(b,c,d)\) begins with a vertical arrow. But a nonzero \(m_3(b,c,d)\) has
	the form \(p\,x_{1,n_1}x_{1,n_1+1}u_{1,n_1+2}\,q,\)
	where \(p u_{1,n_1}\) is composable. Indeed, the path \(p\) has target \(L_{1,n_1}\). In the quiver, any nontrivial path ending at a first-row vertex \(L_{1,n_1}\) is composed only of horizontal arrows in the first row. Hence the output of \(m_3(b,c,d)\) cannot begin with a vertical arrow. Therefore \(m_3(b,c,d)\) cannot be the second input of a supported \(F_2\)-term, and so
	\(
	F_2(a,m_3(b,c,d))=0.
	\)
	Thus the left-hand side of the arity-four equation is zero.
	
	On the other hand, the right-hand side of \eqref{align:f2}  can be nonzero only if both \(F_2(a,b)\) and \(F_2(c,d)\) are nonzero. In that case their product contains two diagonal arrows. Using the zig-zag relations, this product reduces to a path containing a consecutive pair \(z_{j,j+1}z_{j+1,j+2},\) which is zero by the monomial relation. Hence the right-hand side is zero. Hence
	the arity four equation holds.
	
	For arity \(r\ge 5\), every term contains either \(F_s\) with \(s\ge 3\),
	\(m_s\) with \(s\ge 4\), or \(\mu_s\) with \(s\ge 3\). All such terms vanish by
	construction. Therefore all higher \(A_\infty\)-morphism equations hold.
	
	Finally, \(F_1\) induces the identity map on \(H^\ast(\widetilde{\mathcal A}_{n_1,n_2},d)=\mathcal A_{n_1,n_2}.\) Therefore \(F\) is an \(A_\infty\)-quasi-isomorphism.
\end{proof}

We use Massey products to detect non-formality. Triple Massey products were introduced by Massey~\cite{Massey1958}; see also May~\cite{May1969} for the dg algebra setting. Let \(A\) be a dg algebra, and let \([a],[b],[c]\in H^*(A)\) be represented by homogeneous cocycles \(a,b,c\). If \([a][b]=[b][c]=0\), choose \(s,t\in A\) such that \(d(s)=ab\) and \(d(t)=bc\). With our convention, the triple Massey product \(\langle [a],[b],[c]\rangle\) is the set of cohomology classes \(\left[(-1)^{|a|}at-sc\right]\) obtained from all such choices of \(s\) and \(t\). Equivalently, it is a coset modulo the indeterminacy 
\[ [a]\cdot H^{|b|+|c|-1}(A)+H^{|a|+|b|-1}(A)\cdot[c].
\]
In particular, to show that \(0\notin \langle [a],[b],[c]\rangle\), it suffices to show that one representative does not lie in the indeterminacy. A defined Massey product which does not contain \(0\) obstructs formality; see \cite{Tralle-Oprea1997} and \cite[Theorem~2.4]{Biswas2016}.

\begin{corollary}\label{Massey product}
	For \(n_1\ge 2\), the dg algebra \(\widetilde{\mathcal A}_{n_1,n_2}\) is not formal. 
	
\end{corollary}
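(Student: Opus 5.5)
We will exhibit a nonvanishing triple Massey product, using exactly the triple on which $m_3$ is supported in Theorem~\ref{thm:A_infty_qiso}. Take the cocycles
\[
a=u_{1,n_1},\qquad b=x_{n_1,n_1+1},\qquad c=y_{n_1,n_1+2}
\]
in $\widetilde{\mathcal A}_{n_1,n_2}$. These arrows are closed because $d$ vanishes on $x,y,u$. We first check that the Massey product is defined. We have $bc=x_{n_1,n_1+1}y_{n_1,n_1+2}=d(z_{n_1,n_1+1})$, so we take $t=z_{n_1,n_1+1}$. Next, $ab=u_{1,n_1}x_{n_1,n_1+1}=0$ already in $\widetilde{\mathcal A}_{n_1,n_2}$ by the monomial relation $u_{1,i}x_{i,n_1+1}=0$ with $i=n_1\geq 2$; this is where $n_1\ge 2$ enters. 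So we may take $s=0$.

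The representative is then
\[
(-1)^{|a|}at-sc=(-1)^{|u_{1,n_1}|}u_{1,n_1}z_{n_1,n_1+1}=(-1)^{|u_{1,n_1}|}x_{1,n_1}x_{1,n_1+1}u_{1,n_1+2},
\]
using the type~4 relation. Its class in $\mathcal A_{n_1,n_2}$ is the path $w=x_{1,n_1}x_{1,n_1+1}u_{1,n_1+2}\colon L_{1,n_1}\to L_{n_1+1,n_1+2}$. The first point to check is that $w\neq 0$ in $\mathcal A_{n_1,n_2}$. In the quiver description of Proposition~\ref{prop:cohomologysymsqu}, $w$ involves only the first-row horizontal arrows $x_{1,n_1},x_{1,n_1+1}$ and the arrow $u_{1,n_1+2}$. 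No monomial relation of $I_{n_1,n_2}$ kills it: the relations $x_{i,i+1}y_{i,i+2}$ involve a $y$, and $u_{1,n_1+2}$ is followed by nothing. Commutativity relations only rewrite $x_{1,n_1}x_{1,n_1+1}$ within the first row. Alternatively, one can read off nonvanishing from Table~\ref{tab:dimensions}, case (d) with $j=n_1$, $\ell=n_1+2$: the element $\beta\otimes\alpha_{n_1}\alpha_{n_1+1}$ is a nonzero cocycle that is not a boundary. The reason it is not a boundary is that the only candidate preimage paths contain a $z$, and those are excluded by the argument of Lemma~\ref{lem:cohomology-generators}.

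The main step is to show that $w$ lies outside the indeterminacy $[a]\cdot H^{|b|+|c|-1}+H^{|a|+|b|-1}\cdot[c]$. By the idempotent decomposition, only elements $h\colon L_{n_1,n_1+1}\to L_{n_1+1,n_1+2}$ and $g\colon L_{1,n_1}\to L_{n_1,n_1+2}$ matter, with the correct degrees. We then need $w\notin u_{1,n_1}\mathcal A e_{n_1+1,n_1+2}+e_{1,n_1}\mathcal A\, y_{n_1,n_1+2}$. For the first summand, $\Hom$ from $L_{n_1,n_1+1}$ to $L_{n_1+1,n_1+2}$ in $\mathcal A_{n_1,n_2}$ is spanned by paths from $L_{n_1,n_1+1}$. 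The only arrow out of that vertex is $x_{n_1,n_1+1}$, and $u_{1,n_1}x_{n_1,n_1+1}=0$, so this summand is zero. For the second summand, we need that $\Hom_{\mathcal A}(L_{1,n_1},L_{n_1,n_1+2})y_{n_1,n_1+2}$ does not contain $w$. The paths $L_{1,n_1}\to L_{n_1,n_1+2}$ are combinations of $x$'s and $y$'s, together with $u_{1,n_1}$ followed by $x_{n_1,n_1+1}$, which is zero. Composing such a path with $y_{n_1,n_1+2}$ yields a path in $x,y$ only, which goes through row $n_1$ into row $n_1+1$. Such a path never equals the $u$-containing path $w$. We would certify this with a grading argument: count $u$-arrows, which is additive, or equivalently use the grading by the parameter $m$ from Subsection~\ref{subsec:grading}, in which $u$ carries degree $m$ or $m+1$ and $x,y$ carry degree $0$. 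Alternatively, we can use the strand/tensor description, in which $w=\beta\otimes\alpha_{n_1}\alpha_{n_1+1}$, whereas $x,y$-paths lie in $\alpha\otimes\alpha$ components without $\beta$. I expect this separation of the indeterminacy to be the only delicate point. The cleanest route is the tensor-factor argument: elements involving $\beta$ span a two-sided ideal summand that is complementary to the $x,y$-span.

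Hence $\langle[a],[b],[c]\rangle$ is defined and does not contain $0$. By the cited criterion (\cite{Tralle-Oprea1997}, \cite[Theorem~2.4]{Biswas2016}), $\widetilde{\mathcal A}_{n_1,n_2}$ is not formal. This is consistent with $m_3(a,b,c)=\pm w$ in Theorem~\ref{thm:A_infty_qiso}.
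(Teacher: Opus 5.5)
Your proposal is correct and follows the paper's proof essentially step for step. You use the same triple $u_{1,n_1},x_{n_1,n_1+1},y_{n_1,n_1+2}$, the same choices $s=0$ and $t=z_{n_1,n_1+1}$, and the type~4 relation to identify the representative with $x_{1,n_1}x_{1,n_1+1}u_{1,n_1+2}$; you also give the same two-summand analysis of the indeterminacy, and your $u$-count (equivalently $\beta$-count) grading justifies the second summand more cleanly than the paper's brief remark that it factors through $y_{n_1,n_1+2}$.

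The only slip is calling the internal grading by $m$ an equivalent substitute for the $u$-count. For $m=0$ the path $x_{1,n_1}x_{1,n_1+1}u_{1,n_1+2}$ has the same internal degree as the $x,y$-paths, so the separation must rest on the $u$-count grading, which all the relations and $d$ respect.
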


\begin{proof}
	By the definition recalled above, it suffices to compute one representative \(\left[(-1)^{|a|}at-sc\right]\)
	of the triple Massey product and show that its cohomology class does not lie in the corresponding indeterminacy for suitable choices \(s,t\) with \(d(s)=ab\) and \(d(t)=bc\).
	
	Take $a=u_{1,n_1}$, $b=x_{n_1,n_1+1}$ and $c=y_{n_1,n_1+2}.$ Since \(u_{1,n_1}x_{n_1,n_1+1}=0\) and \[
	x_{n_1,n_1+1}y_{n_1,n_1+2}=d(z_{n_1,n_1+1}),
	\]
	the triple Massey product $\langle [u_{1,n_1}], [x_{n_1, n_1+1}], [y_{n_1, n_1+2}] \rangle$ is nonempty. We choose $s=0$ and $t=z_{n_1,n_1+1}$. Hence it contains the class
	\[
	\left[(-1)^{|u_{1,n_1}|}u_{1,n_1}z_{n_1,n_1+1}\right]
	=
	\left[(-1)^{|u_{1,n_1}|}x_{1,n_1}x_{1,n_1+1}u_{1,n_1+2}\right],
	\]
	where the equality follows from the type \(4\) relation. This class is nonzero
	in \(H^\ast(\widetilde{\mathcal A}_{n_1,n_2})\), and by the quiver presentation
	it is not contained in the indeterminacy
	\[
	[u_{1,n_1}]H^\ast(L_{n_1,n_1+1},L_{n_1+1,n_1+2})
	+
	H^\ast(L_{1,n_1},L_{n_1,n_1+2})[y_{n_1,n_1+2}].
	\]
	Indeed, the first summand is zero because \(H^\ast(L_{n_1,n_1+1},L_{n_1+1,n_1+2})\) has no nonzero class represented by a path compatible with the defining relations, while every element of the second summand factors through a path ending in \(y_{n_1,n_1+2}\), whereas \(x_{1,n_1}x_{1,n_1+1}u_{1,n_1+2}\) does not. Thus the Massey product $\langle [u_{1,n_1}], [x_{n_1, n_1+1}], [y_{n_1, n_1+2}] \rangle$ does not contain zero. Therefore
	\(\widetilde{\mathcal A}_{n_1,n_2}\) is not formal.
\end{proof}

\begin{remark}\normalfont
	The non-formality for \(n_1\geq 2\) can also be seen from the Hochschild cohomology computations in Section \ref{section:hhcohomology}. Indeed, if \(\widetilde{\mathcal A}_{n_1,n_2}\) were formal, then it would be $A_\infty$-quasi-isomorphic to its cohomology algebra \((\mathcal A_{n_1,n_2},m_2)\). Since Hochschild cohomology is invariant under $A_\infty$-quasi-isomorphisms, this would imply
	\[
	\operatorname{HH}^*(\widetilde{\mathcal A}_{n_1,n_2})
	\cong
	\operatorname{HH}^*(\mathcal A_{n_1,n_2},m_2).
	\]
	However, comparing Theorem~\ref{thm:graded_HH} with Theorem~\ref{thm:dg_HH_dimensions} shows that these Hochschild cohomology groups are not isomorphic for \(n_1\geq 2\); in particular, the relevant degree-two components have different dimensions. Therefore \(\widetilde{\mathcal A}_{n_1,n_2}\) cannot be formal.
\end{remark}

\section{Hochschild cohomology of the symmetric dg algebras}\label{section:hhcohomology}
Recall from Theorem~\ref{thm:A_infty_qiso} that the symmetric square dg algebra $\widetilde{\mathcal{A}}_{n_1,n_2}$ admits a minimal model $(\mathcal A_{n_1,n_2}, m_2, m_3)$.  
Consequently, its Hochschild cohomology coincides with that of its minimal $A_\infty$-model $(\mathcal A_{n_1,n_2}, m_2, m_3)$.  
Thus, we obtain the following isomorphisms 
\[\operatorname{HH}^*(\mathcal{W}(\operatorname{Sym}^2(\Sigma), \Lambda^{(2)})) \cong \operatorname{HH}^*(\widetilde{\mathcal{A}}_{n_1,n_2}) \cong \operatorname{HH}^*(\mathcal A_{n_1,n_2}, m_2, m_3).\]
The strategy of this section is to separate the computation into two steps. We first compute the Hochschild cohomology of the cohomology algebra \(\mathcal A_{n_1,n_2}\) with only its ordinary multiplication \(m_2\), using the reduction system and the resulting small bimodule resolution. We then incorporate the higher product \(m_3\) by means of the spectral sequence associated with the internal degree filtration on the Hochschild complex; the first nontrivial differential is given by the Gerstenhaber bracket \([m_3,-]\).

\subsection{Hochschild cohomology of $\mathcal{A}_{n_1,n_2}$} 
In this subsection we compute the Hochschild cohomology of the cohomology algebra \(\mathcal A_{n_1,n_2}\), viewed as an ordinary graded algebra with only the multiplication \(m_2\). This computation will later serve as the \(E_1\)-page of the spectral sequence used to recover the Hochschild cohomology of the dg algebra \(\widetilde{\mathcal A}_{n_1,n_2}\).

Throughout this subsection, for simplicity we ignore the internal grading and assume that all arrows have degree \(0\). The graded refinement will be discussed afterwards. We begin by introducing a reduction system for $\mathcal{A}_{n_1,n_2}$, which will serve as the main tool for the computations that follow.

\begin{proposition}\label{reduction system}
	The symmetric square algebra $\mathcal{A}_{n_1,n_2}$ admits a reduction system $R$ satisfying the diamond condition, given by
	\allowdisplaybreaks
	\begin{align*}
		R &=\bigl\{(x_{i,j}y_{i,j+1},\; y_{i,j}x_{i+1,j}) \mid 
		1\leq i<j-1 \leq n_1+n_2-2 \bigr\} \\
		&\cup\; \bigl\{(u_{1,i}x_{n_1+1, i},\; x_{1,i}u_{1,i+1}) \mid 
		n_1+2 \le i \le n_1+n_2-1 \bigr\} \\
		&\cup\; \bigl\{(u_{1,i}y_{i, n_1+1},\; x_{1,i}u_{1,i+1}) \mid 
		1 < i < n_1 \bigr\} \\
		&\cup\; \bigl\{(u_{1,i}x_{i, n_1+1},\; 0) \mid 
		1< i \le n_1 \bigr\} \\
		&\cup\; \bigl\{(u_{1,i}y_{n_1+1, i},\; 0) \mid 
		n_1+3 \le i \le n_1+n_2 \bigr\}\\
		&\cup\; \bigl\{(x_{i,i+1}y_{i,i+2},\; 0) \mid 
		1 \le i \le n_1+n_2-2 \bigr\},
	\end{align*}
	where $S = S^{(1)} \cup S^{(2)} \cup S^{(3)} \cup S^{(4)} \cup S^{(5)} \cup S^{(6)}$ (see Definition \ref{def reduction system}) consists of the following sets:
	\allowdisplaybreaks
	\begin{align*}
		S^{(1)} &= \{x_{i,j}y_{i,j+1} \mid 1\leq i<j-1 \leq n_1+n_2-2 \}, \\
		S^{(2)} &= \{u_{1,i}x_{n_1+1, i} \mid n_1+2 \le i \le n_1+n_2-1 \}, \\
		S^{(3)} &= \{u_{1,i}y_{i, n_1+1} \mid 1 < i < n_1 \}, \\
		S^{(4)} &= \{u_{1,i}x_{i, n_1+1} \mid 1 < i \le n_1 \}, \\
		S^{(5)} &= \{u_{1,i}y_{n_1+1, i} \mid n_1+3 \le i \le n_1+n_2 \},\\
		S^{(6)} &= \{x_{i,i+1}y_{i,i+2} \mid 1 \le i \le n_1+n_2-2 \}.
	\end{align*}
	Here $S^{(1)}$, $S^{(2)}$, $S^{(3)}$ arise from the commutative square relations (see Figure~\ref{fig:commutative-square-relations}) and $S^{(4)}$, $S^{(5)}$, $S^{(6)}$ arise from the monomial relations (see Figure~\ref{fig:monomial-relations}).
\end{proposition}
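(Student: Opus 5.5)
We verify the three requirements of Definition~\ref{def reduction system} and the diamond condition in turn. First, $S\subset Q_{\geq 2}$ consists of paths of length exactly two. Distinct length-two paths are never subpaths of one another, so the non-subpath condition is automatic. Parallelism of each pair $(s,\varphi(s))$ is read off from Figures~\ref{fig:commutative-square-relations} and~\ref{fig:monomial-relations}: the two sides of each commutative square share source and target, and $0$ is parallel to everything.

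Next we check that each $\varphi(s)$ is irreducible. For $S^{(1)}$, the path $y_{i,j}x_{i+1,j}$ begins with a vertical arrow, and no element of $S$ does, so it is irreducible. For $S^{(2)}$ and $S^{(3)}$, the path $x_{1,i}u_{1,i+1}$ starts with a horizontal arrow followed by a blue arrow, and no element of $S$ has this shape. Finally, $S$ generates $I_{n_1,n_2}$ because $\{s-\varphi(s)\}$ is literally the list of generating relations in Proposition~\ref{prop:cohomologysymsqu}. Note the index conventions: $S^{(5)}$ with $i=n_1+j$, $3\le j\le n_2$, matches $u_{1,n_1+j}y_{n_1+1,n_1+j}$.

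The main work is reduction-uniqueness. By Bergman's diamond lemma it suffices to resolve all overlap ambiguities $abc$ with $ab,bc\in S$ (there are no inclusion ambiguities). Classify the overlaps by the middle arrow $b$, which must be the last arrow of some $s\in S$ and the first arrow of some $s'\in S$. The first arrows of elements of $S$ are $x$'s and $u$'s, so $b$ is a horizontal arrow $x$. Its predecessor is either $u$ (from $S^{(2)}$ or $S^{(4)}$) or $x$, but no element of $S$ has the form $xx$, so the predecessor is $u$. This leaves two families.
\begin{itemize}
\item $u_{1,i}x_{n_1+1,i}y_{n_1+1,i+1}$ with $i\ge n_1+2$, where the tail lies in $S^{(1)}$ or, when $i=n_1+2$, in $S^{(6)}$.
\item $u_{1,i}x_{i,n_1+1}y_{i,n_1+2}$ with $i\le n_1$, where the tail lies in $S^{(1)}$ or, when $i=n_1$, in $S^{(6)}$.
\end{itemize}

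For each overlap we reduce both ways and check that the results agree in $\Bbbk\operatorname{Irr}_S$. Two representative cases:
\begin{enumerate}
\item[--] $u_{1,i}x_{n_1+1,i}y_{n_1+1,i+1}$ for $i\ge n_1+3$. One way gives $x_{1,i}u_{1,i+1}y_{n_1+1,i+1}\to 0$ by $S^{(5)}$. The other gives $u_{1,i}y_{n_1+1,i}x_{n_1+2,i}\to 0$.
\item[--] $i=n_1+2$. Here $x_{n_1+1,n_1+2}y_{n_1+1,n_1+3}\in S^{(6)}$ gives $0$, while the other reduction gives $x_{1,n_1+2}u_{1,n_1+3}y_{n_1+1,n_1+3}\to 0$ via $S^{(5)}$ (valid since $n_1+3\le n_1+n_2$ when defined).
\end{enumerate}
The second family is handled the same way. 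One side is $0$ via $S^{(4)}$. The other becomes $u_{1,i}y_{i,n_1+1}x_{i+1,n_1+1}$, which reduces via $S^{(3)}$ to $x_{1,i}u_{1,i+1}x_{i+1,n_1+1}$ and then to $0$ via $S^{(4)}$. For $i=n_1$ both sides are $0$ directly.

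The main obstacle is the bookkeeping at boundary indices: $i=n_1$, $i=n_1+2$, $n_2=2$, $n_1=1$, and the identification $L_{i,n_1+1}=L_{n_1+1,i}$. At each of these edges we must confirm that the reductions we invoke are actually defined, and that no additional overlaps arise.
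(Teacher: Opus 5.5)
Your proposal is correct and follows essentially the same route as the paper. You find the same two overlap families (the paper's set $S_3$) and resolve them with the same reduction chains, including the boundary indices $i=n_1$ and $i=n_1+2$ via $S^{(6)}$; your explicit check of the reduction-system axioms and your middle-arrow classification of the overlaps spell out what the paper delegates to Barmeier--Wang's Heuristic~3.13. The one point that both you and the paper leave implicit is termination of reductions, which Bergman's lemma requires. It holds because every rule strictly decreases the lexicographic word of arrow types under the order $y<x<u$.
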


\begin{proof}
	According to the algorithm in \cite[Heuristic 3.13]{Barmeier-Wang2020}, this follows directly from the relations described above.  
	The set of $1$-ambiguities is
	\begin{align*}
		S_3 &=\bigl\{ u_{1,i}x_{i,n_1+1}y_{i,n_1+2} \mid 2\leq i \leq n_1 \bigr\}\\
		&\cup \bigl\{  u_{1,i}x_{n_1+1,i}y_{n_1+1,i+1} \mid  n_1+2 \leq i\leq n_1+n_2-1  \bigr\}.
	\end{align*}
	Moreover, there are no higher ambiguities. Indeed, every path in \(S=S_2\subseteq Q_2\) has length two and none of them starts with a vertical arrow \(y_{i,j}\), whereas every path in \(S_3\) listed above ends with such a vertical arrow. Hence no element of \(S\) can overlap with the right end of a path in \(S_3\), and therefore no higher overlap ambiguity can occur.
	
	Then we verify that every element of $S_3$ is resolvable.
	
	Case 1: $2\leq i \le n_1-1$.  
	On one side, $u_{1,i}x_{i,n_1+1}y_{i,n_1+2}$ reduces to $0$ because $u_{1,i}x_{i,n_1+1}=0$.  
	On the other side, we have the reduction (where $\to$ denotes the reduction step)
	\[
	u_{1,i}x_{i,n_1+1}y_{i,n_1+2} \;\to\; u_{1,i}y_{i,n_1+1}x_{i+1,n_1+1} \;\to\; x_{1,i}u_{1,i+1}x_{i+1,n_1+1} \;\to\; 0.
	\]
	
	Case 2: $n_1+3 \le i \le n_1+n_2-1$.  
	The two reduction paths are illustrated below:
	\[
	\renewcommand\arraystretch{0.4}
	\setlength\arraycolsep{3pt}
	\begin{array}{c c c c c}
		& & x_{1,i}u_{1,i+1}y_{n_1+1,i+1} & \to & 0 \\
		& \rotatebox{30}{$\to$} & & & \\
		u_{1,i}x_{n_1+1,i}y_{n_1+1,i+1} & & & & \\
		& \rotatebox{-30}{$\to$} & & & \\
		& & u_{1,i}y_{n_1+1,i}x_{n_1+2,i} & \to & 0
	\end{array}.
	\]
	
	Remaining cases: The element $u_{1,n_1}x_{n_1,n_1+1}y_{n_1,n_1+2}$ reduces to $0$ immediately. Indeed, it contains both the zero subpath $u_{1,n_1}x_{n_1,n_1+1} = 0$ and the subpath $x_{n_1,n_1+1}y_{n_1,n_1+2} = 0$. For $u_{1,n_1+2}x_{n_1+1,n_1+2}y_{n_1+1,n_1+3}$, one reduction path replaces $u_{1,n_1+2}x_{n_1+1,n_1+2}$ by $x_{1,n_1+2}u_{1,n_1+3}$, yielding $u_{1,n_1+3}y_{n_1+1,n_1+3} = 0$; meanwhile, the alternative reduction directly applies the zero relation $x_{n_1+1,n_1+2}y_{n_1+1,n_1+3} = 0$. Thus all ambiguities are resolvable.
\end{proof}

\begin{remark}\label{rem:reduction system}
	Since every path in $S_3$ ends with a vertical arrow ($y_{i,n_1+2}$ or $y_{n_1+1,i+1}$) 
	and no element of $S$ starts with such an arrow, no higher ambiguities can arise. Thus \(S_i=0\) for \(i\ge4\), and the projective resolution stops at \(P_3\). As a result, it follows from Theorem \ref{theorem:projective} that $\mathcal A_{n_1,n_2}$ admits a bimodule projective resolution 
	\[
	0\to P_3 \xrightarrow{\partial_3} P_2 \xrightarrow{\partial_2} P_1 \xrightarrow{\partial_1} P_0 \to 0
	\]
	where $P_i = \mathcal A_{n_1,n_2} \otimes_{\Bbbk Q_0} \Bbbk S_i \otimes_{\Bbbk Q_0} \mathcal A_{n_1,n_2}$.
\end{remark}
Using this reduction system, $\operatorname{HH}^*(\mathcal{A}_{n_1,n_2})$ can be computed via the complex concentrated in degrees $0$, $1$, $2$, and $3$ (for brevity we omit the subscript of $\operatorname{Hom}$):
\[
0 \to \operatorname{Hom}(\Bbbk Q_0, \mathcal{A}_{n_1,n_2}) \xrightarrow{\partial^0} 
\operatorname{Hom}(\Bbbk Q_1, \mathcal{A}_{n_1,n_2}) \xrightarrow{\partial^1} 
\operatorname{Hom}(\Bbbk S_2, \mathcal{A}_{n_1,n_2}) \xrightarrow{\partial^2} 
\operatorname{Hom}(\Bbbk S_3, \mathcal{A}_{n_1,n_2}) \to 0.
\]
Let us consider the following two subsets of $\operatorname{Hom}(\Bbbk S_2, \mathcal{A}_{n_1,n_2}) $:
\begin{align}
	\begin{aligned}
		T_1 &= \{(x_{i,j}y_{i,j+1} \parallel y_{i,j}x_{i+1,j}) \mid 1 \le i<j-1 \le n_1+n_2-2\}, \label{eq:T1}\\
		T_2 &= \{(u_{1,j}x_{n_1+1,j} \parallel x_{1,j}u_{1,j+1}) \mid n_1+2 \le j \le n_1+n_2-1\}. 
	\end{aligned}
\end{align}
The set $T_1$ consists of the basis elements corresponding to the commutative square relations of type $1$, while $T_2$ corresponds to the commutative square relations of type $2$ (see Proposition~\ref{prop:cohomologysymsqu} and Figure~\ref{fig:commutative-square-relations}). 

We will extend $T_1 \cup T_2$ into a basis of $\operatorname{Hom}(\Bbbk S_2, \mathcal{A}_{n_1,n_2})$ in the proofs of Lemmas \ref{lemma: HH of tilde A_{1,n_2}}, \ref{lemma:HH-A2-n2}, and \ref{lemma: HH of tilde A_{n1,n2}}. The following lemma will be highly useful for later computations, as it establishes that $\partial^1$ maps onto the span of $T_1\cup T_2$.

\begin{lemma}\label{lemma:T1T2_in_image}
	Let $T_1$ and $T_2$ be the subsets of $\operatorname{Hom}(\Bbbk S_2, \mathcal{A}_{n_1,n_2})$ defined in \eqref{eq:T1}. Then $T_1\cup T_2 \subseteq \operatorname{Im}(\partial^1)$.
\end{lemma}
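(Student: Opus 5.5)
We show that each element of $T_1\cup T_2$ is the image under $\partial^1$ of an explicit basis element $(a\parallel a)$ of $\operatorname{Hom}(\Bbbk Q_1,\mathcal A_{n_1,n_2})$, or of a short combination of such elements. The tool is formula \eqref{eq:partial1}. For a \emph{diagonal} cochain $(a\parallel a)$ with $a$ an arrow, the formula says that $\partial^1(a\parallel a)(s)$ equals $N_a(s)\,s - N_a(\varphi(s))\,\varphi(s)$ in $\mathcal A_{n_1,n_2}$. Here $N_a(p)$ counts occurrences of $a$ in a path $p$, the term is read linearly in $\varphi(s)$, and the result is reduced to irreducible form. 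Every $s\in S$ has length two and every $\varphi(s)$ is a path of length two or zero, so this value is easy to read off.

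\textbf{Type 1 square.} Fix $s=x_{i,j}y_{i,j+1}$ with $\varphi(s)=y_{i,j}x_{i+1,j}$. Since $s$ reduces to $\varphi(s)$, the output is $(N_a(s)-N_a(\varphi(s)))\,y_{i,j}x_{i+1,j}$. Taking $a=x_{i,j}$ gives coefficient $1$ on this relation, because $x_{i,j}\neq x_{i+1,j}$. The same cochain also contributes on every other relation in which $x_{i,j}$ occurs:
\begin{itemize}[label=--]
\item squares of type 1 where $x_{i,j}$ is the bottom edge, i.e.\ the square $(i-1,j)$;
\item the monomial relations in $S^{(4)}$ and $S^{(6)}$, where it gives multiples of $0$;
\item the relations in $S^{(2)}$, for $i=n_1+1$ or $i=1$.
\end{itemize}
To isolate a single square I will use differences, namely a telescoping combination $\sum_k \lambda_k (x_{k,j}\parallel x_{k,j})$ down a column, or a combination with the $y$-cochains along a row. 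The cleanest choice is $\chi=\sum_{k\le i}(x_{k,j}\parallel x_{k,j})$, which assigns weight $1$ to the horizontal arrows $x_{k,j}$ with $k\le i$ in column $j$. A type 1 square $(k,j)$ then receives $\chi$-weight $[k\le i]-[k+1\le i]$, which is nonzero only for $k=i$.

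\textbf{Spurious contributions.} It remains to check that $\chi$ vanishes on the other relation families. For $S^{(4)}$, $S^{(5)}$ and $S^{(6)}$ the target is $0$, so those components vanish automatically; indeed $\partial^1$ of any diagonal cochain is zero on monomial relations. For $S^{(2)}$, if $i\ge n_1+1$ the column-$j$ arrows $x_{1,j}$ and $x_{n_1+1,j}$ both carry weight $1$, so the $S^{(2)}$ relation at $j$ gets weight $1-1=0$. For $S^{(3)}$ no horizontal arrow occurs except $x_{1,i'}$ on one side only. This can create a contribution when $j=i'<n_1$; I then correct by adding $(u_{1,j}\parallel u_{1,j})$ or $(u_{1,j+1}\parallel u_{1,j+1})$, whose own contributions must be tracked as well.

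\textbf{Type 2 square.} For $T_2$, take $(u_{1,j}\parallel u_{1,j})$. Its value on $u_{1,j}x_{n_1+1,j}$ is $u_{1,j}x_{n_1+1,j}$, which reduces to $x_{1,j}u_{1,j+1}$. On the neighbouring relation $u_{1,j-1}x_{n_1+1,j-1}\to x_{1,j-1}u_{1,j}$ it gives $-x_{1,j-1}u_{1,j}$, and on the $S^{(5)}$ relations it gives $0$. Induction on $j$ from the top index $n_1+n_2-1$ downward then yields each element of $T_2$ by telescoping.

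The main obstacle is the bookkeeping of these boundary interactions, especially where $S^{(2)}$, $S^{(3)}$ and the rows $1$ and $n_1+1$ meet. An alternative that avoids the cancellations is to observe that $\operatorname{Im}\partial^1$ restricted to the diagonal cochains equals the span of $\{s'\text{-weight differences}\}$. The required statement is then that the incidence map from arrows to square relations, given by $a\mapsto\sum_s(N_a(s)-N_a(\varphi(s)))[s]$, is surjective onto the coordinates indexed by $T_1\cup T_2$. This is a linear-algebra statement about a graph, and I would verify it by the inductive ordering above.
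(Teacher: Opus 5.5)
Your $T_2$ argument is the paper's. It uses telescoping sums of $(u_{1,k}\parallel u_{1,k})$; the paper sums upward, $\partial^1\bigl(\sum_{k=n_1+2}^{j}(u_{1,k}\parallel u_{1,k})\bigr)=\tau_j$, where $\tau_j:=(u_{1,j}x_{n_1+1,j}\parallel x_{1,j}u_{1,j+1})$. Your formula $\partial^1(a\parallel a)(s)=(N_a(s)-N_a(\varphi(s)))\,\varphi(s)$ is also correct.

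The gap is in the $T_1$ part: the spurious-term analysis for $\chi=\sum_{k=1}^{i}(x_{k,j}\parallel x_{k,j})$ misses a nonzero term. $\chi$ always contains $x_{1,j}$, and $x_{1,j}$ sits on the $\varphi$-side of the $S^{(2)}$ relation in column $j$. Writing $\sigma_{i,j}=(x_{i,j}y_{i,j+1}\parallel y_{i,j}x_{i+1,j})$ and $\rho_j=(u_{1,j}y_{j,n_1+1}\parallel x_{1,j}u_{1,j+1})$, one actually gets
\[
\partial^1(\chi)=\sigma_{i,j}-[\,j\ge n_1+2\text{ and }i\le n_1\,]\,\tau_j-[\,2\le j\le n_1-1\,]\,\rho_j .
\]
You checked $S^{(2)}$ only for $i\ge n_1+1$. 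For $i\le n_1$ and $j\ge n_1+2$ the weight is $0-1=-1$, not $0$. For example, when $n_1=1$ one has $\partial^1(x_{1,j}\parallel x_{1,j})=\sigma_{1,j}-\tau_j$ for $3\le j\le n_2$.

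Your $S^{(3)}$ correction is also unfinished. A single $(u_{1,j}\parallel u_{1,j})$ has image $\rho_j-\rho_{j-1}$, so it creates a new spurious $-\rho_{j-1}$.

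Both defects can be repaired:
\begin{itemize}
\item Prove $T_2$ first, then add $\sum_{k=n_1+2}^{j}(u_{1,k}\parallel u_{1,k})$ to cancel $\tau_j$.
\item Add the whole telescoping sum $\sum_{k=2}^{j}(u_{1,k}\parallel u_{1,k})$. Its image is exactly $\rho_j$, because $u_{1,k}$ with $k\le n_1$ occurs only in $S^{(3)}$ and in the monomial relations $S^{(4)}$.
\end{itemize}
For the same reason, your closing alternative is not enough as phrased. Hitting the coordinates indexed by $T_1\cup T_2$ does not suffice; you need preimages whose $S^{(3)}$-components vanish.

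The paper avoids all of this by using vertical rather than horizontal cochains. For $k\neq n_1+1$ one has $\partial^1(y_{i,k}\parallel y_{i,k})=\sigma_{i,k-1}-\sigma_{i,k}$, with out-of-range terms omitted. The paper sums along row $i$:
\begin{itemize}
\item over $k=j+1,\dots,n_1+n_2$ when $j\ge n_1+1$, obtaining $\sigma_{i,j}$;
\item over $k=i+2,\dots,j$ when $j\le n_1$, obtaining $-\sigma_{i,j}$.
\end{itemize}
The only vertical arrows that occur in a non-monomial relation other than a type~1 square are the $y_{i',n_1+1}$ in $S^{(3)}$, and neither range reaches column $n_1+1$. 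This is exactly why the paper splits into two cases. It makes the $T_1$ argument self-contained, with no correction terms and no dependence on $T_2$.
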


\begin{proof} 
	We first prove that $T_1\subseteq \operatorname{Im}(\partial^1)$. The set $T_1$ consists of the basis elements corresponding to the commutative square relations of type $1$. For $1\leq i<j-1 \leq n_1+n_2-2$, we denote 
	\[
	\sigma_{i,j} := (x_{i,j}y_{i,j+1} \parallel y_{i,j}x_{i+1,j}) \in T_1,
	\]
	which maps the upper-right path $x_{i,j}y_{i,j+1}$ to the lower-left path $y_{i,j}x_{i+1,j}$. To show that $\sigma_{i,j} \in \operatorname{Im}(\partial^1)$, we evaluate the differential $\partial^1$ on the vertical basis elements $(y_{i,k} \parallel y_{i,k})$. Based on the column index $j$, we consider the following two cases separately:
	
	\begin{itemize}[leftmargin=*, label=--]
		\item If $j \ge n_1+1$, using the definition of the differential in \eqref{eq:partial1}, we evaluate $\partial^1$ on the vertical basis elements. For any $k$ with $i+2 \le k \le n_1+n_2$, we have:
		\begin{equation*}
			\partial^1(y_{i,k} \parallel y_{i,k}) = 
			\begin{cases}
				\sigma_{i,k-1} - \sigma_{i, k}, & \text{if } i+2 \le k \le n_1+n_2-1,\\[4pt]
				\sigma_{i,n_1+n_2-1}, & \text{if } k = n_1+n_2.
			\end{cases}
		\end{equation*}  
		Summing these identities from $k=j+1$ to $n_1+n_2$, the intermediate terms cancel out, yielding (see Figure~\ref{first row comm square}):
		\[
		\partial^1 \left(\sum\limits_{k=j+1}^{n_1+n_2} (y_{i,k} \parallel y_{i,k})\right) = (\sigma_{i,j}-\sigma_{i,j+1}) + \dots + \sigma_{i,n_1+n_2-1} = \sigma_{i,j}.
		\]
		This confirms that each basis element $\sigma_{i,j}$ for $j \ge n_1+1$ lies in the image of $\partial^1$.
		
		\begin{figure}[htbp]
			\centering
			\begin{tikzcd}[column sep=1.3cm, row sep=normal]
				L_{i,j} \ar[r, "x_{i,j}"] \ar[d, "y_{i,j}"] &
				L_{i,j+1} \ar[d, "y_{i,j+1}"] \ar[dl, phantom, "\color{blue}\sigma_{i,j}"'] \ar[r, "x_{i,j+1}"] &
				L_{i,j+2} \ar[dl, phantom, "\color{blue}\sigma_{i,j+1}"'] \ar[d, "y_{i,j+2}"] \ar[r] &
				\cdots \ar[r] &
				L_{i,n_1+n_2} \ar[d, "y_{i,n_1+n_2}"] \ar[dl, phantom, "\color{blue}\sigma_{i,n_1+n_2-1}"'] \\
				L_{i+1,j}  \ar[r, "x_{i+1,j}"] &
				L_{i+1,j+1} \ar[r, "x_{i+1,j+1}"] &
				L_{i+1,j+2} \ar[r] &
				\cdots \ar[r] &
				L_{i+1,n_1+n_2}
			\end{tikzcd}
			\caption{Cancellation of intermediate terms for commutative squares in $T_1$.}
			\label{first row comm square}
		\end{figure}
		
		\item If $j \le n_1$, a similar cancellation occurs. By summing over the vertical arrows from $k=i+2$ to $j$, we obtain:
		\[
		\partial^1\left(\sum\limits_{k=i+2}^{j} (y_{i,k} \parallel y_{i,k})\right) = -(x_{i,j}y_{i,j+1} \parallel y_{i,j}x_{i+1,j}) = -\sigma_{i,j}.
		\]
		This implies that $-\sigma_{i,j}$ (and hence $\sigma_{i,j}$) is in the image of $\partial^1$.
	\end{itemize}
	In both cases, we conclude that each basis element $\sigma_{i,j}$ lies in the image of $\partial^1$, which yields $T_1 \subseteq \operatorname{Im}(\partial^1)$.
	
	Next, we show that $T_2 \subseteq \operatorname{Im}(\partial^1)$. For a given basis element $(u_{1,j}x_{n_1+1,j} \parallel x_{1,j}u_{1,j+1}) \in T_2$, we similarly evaluate $\partial^1$ on the blue arrows $u_{1,k}$. Summing these equations, the intermediate terms cancel out, yielding:
	\[
	\partial^1\left(\sum\limits_{k=n_1+2}^{j} (u_{1,k} \parallel u_{1,k})\right) = (u_{1,j}x_{n_1+1,j} \parallel x_{1,j}u_{1,j+1}).
	\]
	This confirms that $T_2 \subseteq \operatorname{Im}(\partial^1)$, completing the proof.
\end{proof}

\subsubsection{Case: $n_1=1$ and $n_2\geq 2$}

Let us first consider the case  $n_1=1$ and $n_2=2$. Recall the gentle algebra $G_{1,2}$ corresponding to the surface shown in Figure~\ref{gentle algebra A(1,2)} is subject to the relation $\beta\alpha_2=0$.  
\begin{figure}[htbp]
	\centering
	\captionsetup{font=small}
	\begin{minipage}[b]{0.53\textwidth}
		\centering
		\begin{tikzcd}[column sep=normal, row sep=normal]
			& L_1 \ar[r,"\beta" swap] \ar[r, bend left=30, "\alpha_1"]
			& L_{2} \ar[r,"\alpha_{2}"] 
			& L_3
		\end{tikzcd}
		\captionof{figure}{Quiver of $G_{1,2}$.}
		\label{gentle algebra A(1,2)}
	\end{minipage}\hfill
	\begin{minipage}[b]{0.47\textwidth}
		\centering
		\begin{tikzcd}[column sep=large, row sep=normal]
			L_{1,2} \ar[r, "x_{1,2}"] & 
			L_{1,3} \ar[d, "y_{1,3}"] \ar[blue, d, bend right=30, "u_{1,3}" swap] &  \\
			& L_{2,3}
		\end{tikzcd}
		\captionof{figure}{Quiver of $A_{1,2}$.}
		\label{Quiver of $A_{1,2}$}
	\end{minipage}
\end{figure}

\begin{example}\label{ex: n_1=1,n_2=2}
	For $n_1=1$, $n_2=2$ the symmetric square algebra $\mathcal A_{1,2}$ is given by the quiver  $Q_{1,2}$ depicted in Figure~\ref{Quiver of $A_{1,2}$} with the relation $ x_{1,2} y_{1,3}=0.$ In particular, $\mathcal A_{1,2}$ is a gentle algebra.  Then by \cite{Chaparro-Schroll-SuarezAlvarez-Solotar2023} we have that $\operatorname{HH}^*(\mathcal{A}_{1,2})$ is concentrated in degrees $0$ and $1$, with
	\[
	\dim_{\Bbbk} \operatorname{HH}^0(\mathcal{A}_{1,2}) = 1,\qquad 
	\dim_{\Bbbk} \operatorname{HH}^1(\mathcal{A}_{1,2}) = 2.
	\]
	This can be computed by the following complex
	\begin{equation*}
		0 \to \Hom(\Bbbk Q_0, \mathcal{A}_{1,2}) \xrightarrow{\partial^0} \Hom(\Bbbk Q_1, \mathcal{A}_{1,2}) \xrightarrow{\partial^1} \Hom(\Bbbk S, \mathcal{A}_{1,2}) \to 0
	\end{equation*}
	concentrated in degrees $0$, $1$, and $2$. 
	Explicit cocycles representing the nonzero classes are:
	\begin{itemize}[leftmargin=*, label=--]
		\item In $\operatorname{HH}^0(\mathcal{A}_{1, 2})$: $(e_{1,2}\parallel e_{1,2})+(e_{1,3}\parallel e_{1,3})+(e_{2,3}\parallel e_{2,3})$,
		\item In $\operatorname{HH}^1(\mathcal{A}_{1, 2})$: $(u_{1,3} \parallel u_{1,3})$ and $(u_{1, 3} \parallel y_{1, 3})$.
	\end{itemize}
\end{example}

When $n_2>2$ the algebra becomes more involved, but its Hochschild cohomology admits a clean description, as we show next.

\begin{lemma}\label{lemma: HH of tilde A_{1,n_2}}
	Let $n_1 = 1$ and $n_2 > 2$. Then $\operatorname{HH}^*(\mathcal{A}_{1,n_2})$ is concentrated in degrees $0$ and $1$, with
	\[
	\dim_{\Bbbk} \operatorname{HH}^0(\mathcal{A}_{1, n_2}) = 1,\qquad 
	\dim_{\Bbbk} \operatorname{HH}^1(\mathcal{A}_{1, n_2}) = 1.
	\]
\end{lemma}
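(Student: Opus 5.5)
We compute with the four-term complex coming from the reduction system of Proposition~\ref{reduction system}, specialised to $n_1=1$. For $n_1=1$ the sets $S^{(3)}$ and $S^{(4)}$ are empty, since $1<i<n_1$ and $1<i\le n_1$ have no solutions. So $S_2=S^{(1)}\cup S^{(2)}\cup S^{(5)}\cup S^{(6)}$, and $S_3$ is $\{u_{1,i}x_{2,i}y_{2,i+1}\mid 3\le i\le n_2\}$.

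Since $Q_{1,n_2}$ is connected and acyclic and $I_{1,n_2}$ is admissible, Lemma~\ref{lemma:HH0-dimension} gives $\dim\operatorname{HH}^0=1$ at once. For the remaining degrees we argue by an Euler characteristic count combined with explicit control of the differentials. Write $c_k=\dim\Hom_{(\Bbbk Q_0)^e}(\Bbbk S_k,\mathcal A_{1,n_2})$, with $S_0=Q_0$ and $S_1=Q_1$. Each $c_k$ is obtained by counting, for each element of $S_k$, the nonzero irreducible paths parallel to it, using the dimension table (Table~\ref{tab:dimensions}, modified by the extra relations $x_{i,i+1}y_{i,i+2}=0$).

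The structure of these counts is as follows.
\begin{itemize}
\item For $s\in S^{(1)}$ the only parallel irreducible path is $y_{i,j}x_{i+1,j}$.
\item For $s\in S^{(6)}$ there is no parallel irreducible path, because the square's other side $y_{i,i+1}$ does not exist.
\item For $s\in S^{(2)}$ the parallel path is $x_{1,j}u_{1,j+1}$, together with possibly $x_{1,j}y_{1,j+1}$. Here $u_{1,j}$ and the path $y_{1,j}$ both go $L_{1,j}\to L_{2,j}$, which is the source of the dimension-2 entries.
\item For $s\in S^{(5)}$ the parallel paths are $y_{1,i}y_{2,i}$, $u_{1,i}y_{2,i}$, and so on.
\item For $S_3$ the targets are paths $L_{1,i}\to L_{3,i+1}$.
\end{itemize}

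The key steps, in order, are these.
\begin{enumerate}
\item Compute $\ker\partial^1$. A degree-1 cochain assigns to each arrow a combination of parallel irreducible paths. The parallel pairs are the loops $(a\parallel a)$ for each arrow $a$, and the pairs $(u_{1,i}\parallel y_{1,i})$, $(y_{1,i}\parallel u_{1,i})$ for $3\le i\le n_2+1$, plus possibly longer parallel paths.
\item Show that modulo $\operatorname{Im}\partial^0$ (which has dimension $|Q_0|-1$) the diagonal part of $\ker\partial^1$ is spanned by a single class, namely the grading derivation $\sum_i(u_{1,i}\parallel u_{1,i})$. The $x,y$ weights are killed by inner derivations and the type~1 relations; Lemma~\ref{lemma:T1T2_in_image} is what tells us that the individual $(y\parallel y)$ and $(u\parallel u)$ cochains are not cocycles.
\item Show that the cochains $(u_{1,i}\parallel y_{1,i})$, which give the extra class when $n_2=2$, now fail to be cocycles when $n_2\ge3$. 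Applying $\partial^1$ to $(u_{1,i}\parallel y_{1,i})$, the monomial relation $u_{1,i}y_{2,i}=0$ of $S^{(5)}$ produces $y_{1,i}y_{2,i}\neq0$. The type-2 square also forces the family to be constant in $i$. These constraints are incompatible, so no nonzero combination survives.
\item Prove $\operatorname{HH}^2=\operatorname{HH}^3=0$ by showing that $\partial^1$ surjects onto $\ker\partial^2$ and that $\partial^2$ is surjective. For $\operatorname{Hom}(\Bbbk S_2,\mathcal A)$, Lemma~\ref{lemma:T1T2_in_image} already puts $T_1\cup T_2$ in the image. The remaining basis elements, those on $S^{(2)}$ with target $x_{1,j}y_{1,j+1}$ and those on $S^{(5)}$, we hit with $\partial^1$ of the cochains $(u_{1,i}\parallel y_{1,i})$ and $(y_{1,i}\parallel u_{1,i})$, or else show that they map isomorphically onto $\operatorname{Hom}(\Bbbk S_3,\mathcal A)$ under $\partial^2$.
\item Close with the dimension check $c_0-c_1+c_2-c_3=1-1=0$, which forces the ranks to be consistent.
\end{enumerate}

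The main obstacle is step 4: matching the basis elements of $\operatorname{Hom}(\Bbbk S_2,\mathcal A)$ outside $T_1\cup T_2$ with either images of $\partial^1$ or a complement mapping isomorphically onto $\operatorname{Hom}(\Bbbk S_3,\mathcal A)$. This needs the explicit split-map computation of $\partial^2$ on the ambiguities $u_{1,i}x_{2,i}y_{2,i+1}$. We will do it by ordering the basis by column index $i$ and checking that the resulting matrix is triangular with nonzero diagonal.
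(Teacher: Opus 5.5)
Your overall plan matches the paper's. You use the four-term reduction-system complex, get $\operatorname{HH}^0$ from connectedness, split $\partial^1$ by cochain type, and obtain surjectivity of $\partial^2$ from the $S^{(2)}$-cochains. Your Step~3 is a tidy version of the paper's triangularity argument: the $S^{(2)}$-components force the coefficients of $\sum_j c_j(u_{1,j}\parallel y_{1,j})$ to be constant, and the $S^{(5)}$-components force $c_j=0$ for $j\ge 4$.

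However, your bookkeeping of $\Hom(\Bbbk S_2,\mathcal A_{1,n_2})$ is wrong in a way that matters. Since $u_{1,j+1}$ and $y_{1,j+1}$ are parallel, the irreducible path $x_{1,j}u_{1,j+1}$ is nonzero and parallel to $x_{1,j}y_{1,j+1}$. So $x_{1,j}y_{1,j+1}\in S^{(1)}$ ($j\ge 3$) has \emph{two} parallel irreducible paths, $y_{1,j}x_{2,j}$ and $x_{1,j}u_{1,j+1}$. Likewise $x_{1,2}y_{1,3}\in S^{(6)}$ has one, namely $x_{1,2}u_{1,3}$ (geometrically $\beta\otimes\alpha_2$), not none. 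This gives $n_2-1$ basis cochains $(x_{1,j}y_{1,j+1}\parallel x_{1,j}u_{1,j+1})$, $2\le j\le n_2$ (the paper's $T_3$), which are absent from your list of ``remaining'' basis elements in Step~4.

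Your listed targets must also be irreducible. For $u_{1,j}x_{2,j}$ the second target is $y_{1,j}x_{2,j}$, not $x_{1,j}y_{1,j+1}$. For $u_{1,i}y_{2,i}$ the only target is $y_{1,i}y_{2,i}$, because $u_{1,i}y_{2,i}$ is reducible and zero. With the correct $S^{(5)}$ count, your $c_2$ is short by exactly $n_2-1$, so the Euler characteristic check in Step~5 does not close.

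The omission hides a missing step in your $\operatorname{HH}^1$ count: you never show that the $n_2-1$ cochains $(y_{1,j}\parallel u_{1,j})$ contribute nothing to $\ker\partial^1$. The needed identity is
\[
\partial^1(y_{1,j}\parallel u_{1,j})=(x_{1,j-1}y_{1,j}\parallel x_{1,j-1}u_{1,j})-(x_{1,j}y_{1,j+1}\parallel x_{1,j}u_{1,j+1}),
\]
with the second term absent for $j=n_2+1$. This is triangular with unit diagonal, so $\partial^1$ maps $\operatorname{Span}\{(y_{1,j}\parallel u_{1,j})\}$ isomorphically onto the span of the $T_3$-cochains.

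The three blocks of $\partial^1$ then have disjoint supports:
\begin{itemize}
\item $\partial^1$ of the diagonal cochains lies in $\operatorname{Span}(T_1\cup T_2)$;
\item $\partial^1$ of the $(y_{1,j}\parallel u_{1,j})$ lies in the span of the $T_3$-cochains;
\item $\partial^1$ of the $(u_{1,j}\parallel y_{1,j})$ lies in the span of the $(u_{1,j}x_{2,j}\parallel y_{1,j}x_{2,j})$ and $(u_{1,j}y_{2,j}\parallel y_{1,j}y_{2,j})$.
\end{itemize}
This is what justifies analysing ``the diagonal part of $\ker\partial^1$'' on its own. The same identity does two jobs: it rules out $n_2-1$ spurious $\operatorname{HH}^1$ classes, and it fills $T_3$ so that $\operatorname{Im}\partial^1=\ker\partial^2$. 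With this added, your outline becomes the paper's proof.
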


\begin{proof}
	The gentle algebra $G_{1, n_2}$  is presented by the quiver in Figure~\ref{gentle algebra A(n_1,1)} with the relation $\beta\alpha_2=0$.
	\begin{figure}[htbp]
		\centering
		\begin{tikzcd}[column sep=normal, row sep=normal]
			& L_1\arrow[r,"\beta" swap ] \ar[ r, bend left=30, "\alpha_1" ]
			& L_{2}\arrow[r,"\alpha_{2}"] 
			& \cdots \arrow[r,"\alpha_{n_2-1}"] 
			& L_{n_2} \arrow[r,"\alpha_{n_2}"] 
			& L_{n_2+1}
		\end{tikzcd}
		\caption{Quiver of gentle algebra $G_{1, n_2}$}
		\label{gentle algebra A(n_1,1)}
	\end{figure}
	
	The quiver $Q_{1,n_2}$ of the symmetric square algebra $\mathcal A_{1, n_2}$ is depicted in Figure~\ref{Quiver of $A_{1,n_2}$}. 
	\begin{figure}[htbp]
		\centering
		\begin{tikzcd}[column sep=large, row sep=normal]
			L_{1,2} \ar[r, "x_{1,2}"] & 
			L_{1,3} \ar[r, "x_{1,3}"] \ar[d, "y_{1,3}"] \ar[blue, d, bend right=30, "u_{1,3}" swap] & 
			L_{1,4} \ar[r, "x_{1,4}"] \ar[d, "y_{1,4}"] \ar[blue, d, bend right=30, "u_{1,4}" swap] & 
			\cdots \ar[r, "x_{1,n_2-1}"] & 
			L_{1,n_2} \ar[r, "x_{1,n_2}"] \ar[d, "y_{1,n_2}"] \ar[blue, d, bend right=30, "u_{1,n_2}" swap] & 
			L_{1,n_2+1} \ar[d, "y_{1,n_2+1}"] \ar[blue, d, bend right=30, "u_{1,n_2+1}" swap] \\
			& 
			L_{2,3} \ar[r, "x_{2,3}"] & 
			L_{2,4} \ar[r, "x_{2,4}"] \ar[d, "y_{2,4}"] & 
			\cdots \ar[r, "x_{2,n_2-1}"] & 
			L_{2,n_2} \ar[r, "x_{2,n_2}"] \ar[d, "y_{2,n_2}"] & 
			L_{2,n_2+1} \ar[d, "y_{2,n_2+1}"] \\
			&& 
			L_{3,4} \ar[r, "x_{3,4}"] & 
			\cdots \ar[r, "x_{3,n_2-1}"] & 
			L_{3,n_2} \ar[r, "x_{3,n_2}"] \ar[d, "y_{3,n_2}"] & 
			L_{3,n_2+1} \ar[d, "y_{3,n_2+1}"] \\
			&&& 
			\ddots & 
			\vdots \ar[d, "y_{n_2-2,n_2}"] & 
			\vdots \ar[d, "y_{n_2-2,n_2+1}"] \\
			&&&& 
			L_{n_2-1,n_2} \ar[r, "x_{n_2-1,n_2}"] & 
			L_{n_2-1,n_2+1} \ar[d, "y_{n_2-1,n_2+1}"] \\
			&&&&& 
			L_{n_2,n_2+1}
		\end{tikzcd}
		\caption{Quiver of $\mathcal{A}_{1, n_2}$.}
		\label{Quiver of $A_{1,n_2}$}
	\end{figure}
	By Remark~\ref{rem:reduction system}, the Hochschild cohomology of $\mathcal{A}_{1, n_2}$ can be computed via the following complex,
	\[
	0 \to \operatorname{Hom}(\Bbbk Q_0, \mathcal{A}_{1, n_2}) \xrightarrow{\partial^0} 
	\operatorname{Hom}(\Bbbk Q_1, \mathcal{A}_{1, n_2}) \xrightarrow{\partial^1} 
	\operatorname{Hom}(\Bbbk S_2, \mathcal{A}_{1, n_2}) \xrightarrow{\partial^2} 
	\operatorname{Hom}(\Bbbk S_3, \mathcal{A}_{1, n_2}) \to 0,
	\]
	which is concentrated in degrees $0$, $1$, $2$, and $3$.
	It follows from Lemma~\ref{lemma:HH0-dimension} that  
	\[
	\dim_{\Bbbk} \operatorname{HH}^0(\mathcal{A}_{1,n_2}) = 1.
	\]
	
	We next give bases for the vector spaces appearing in the above complex.
	\begin{enumerate}[
		label=(\alph*),
		leftmargin=*,
		itemindent=0pt,
		labelindent=0pt,
		labelsep=0.5em
		]
		\item A basis of $\operatorname{Hom}(\Bbbk Q_0,  \mathcal{A}_{1, n_2})$ is given by
		\(
		\{(e_{i,j} \parallel e_{i,j}) \mid 1 \leq i < j \leq n_2+1\},
		\)
		where $e_{i,j}$ denotes the idempotent at the vertex $L_{i,j}$. 
		
		\item A basis of $\operatorname{Hom}(\Bbbk Q_1,  \mathcal{A}_{1, n_2})$ is $B = B_1 \cup B_2 \cup B_3$, where
		\begin{align*}
			B_1 &= \{(\alpha \parallel \alpha) \mid \alpha \in Q_1\},\\
			B_2 &= \{(y_{1,j} \parallel u_{1,j}) \mid 3 \le j \le n_2+1\},\\
			B_3 &= \{(u_{1,j} \parallel y_{1,j}) \mid 3 \le j \le n_2+1\}.
		\end{align*}
		
		\item A basis of $\operatorname{Hom}(\Bbbk S_2,  \mathcal{A}_{1, n_2})$ is $T = T_1 \cup T_2 \cup T_3 \cup T_4 \cup T_5$, where
		\allowdisplaybreaks
		\begin{align*}
			T_1 &= \{(x_{i,j}y_{i,j+1} \parallel y_{i,j}x_{i+1,j}) \mid 1\leq i<j-1 \leq n_2-1\},\\
			T_2 &= \{(u_{1,j}x_{2,j} \parallel x_{1,j}u_{1,j+1}) \mid 3 \le j \le n_2\},\\
			T_3 &= \{(x_{1,j}y_{1,j+1} \parallel x_{1,j}u_{1,j+1}) \mid 2 \le j \le n_2\},\\
			T_4 &= \{(u_{1,j}x_{2,j} \parallel y_{1,j}x_{2,j}) \mid 3 \le j \le n_2\},\\
			T_5 &= \{(u_{1,j}y_{2,j} \parallel y_{1,j}y_{2,j}) \mid 4 \le j \le n_2+1\}.
		\end{align*}
		Note that $T_1$ and $T_2$ coincide with the ones in \eqref{eq:T1}.
		\item A basis of $\operatorname{Hom}(\Bbbk S_3,  \mathcal{A}_{1, n_2})$ is
		\[
		\{(u_{1,j}x_{2,j}y_{2,j+1} \parallel y_{1,j}y_{2,j}x_{3,j+1}) \mid 4 \le j \le n_2\}.
		\]
		Each such basis element corresponds to the unique non‑zero parallel path from $L_{1,j}$ to $L_{3,j+1}$. Note that this set is empty when $n_2=3$.
	\end{enumerate}
	Consequently, we have
	
	\allowdisplaybreaks
	\begin{align*}
		\dim_{\Bbbk} \operatorname{Hom}(\Bbbk Q_0,  \mathcal{A}_{1, n_2}) &= \frac{(n_2+1)n_2}{2},\\
		\dim_{\Bbbk} \operatorname{Hom}(\Bbbk Q_1,  \mathcal{A}_{1, n_2}) &= n_2^2+2n_2-3,\\
		\dim_{\Bbbk} \operatorname{Hom}(\Bbbk S_2,  \mathcal{A}_{1, n_2})  &= \frac{n_2^2 + 5n_2 - 12}{2},\\
		\dim_{\Bbbk} \operatorname{Hom}(\Bbbk S_3, \mathcal{A}_{1,n_2}) &= n_2-3.
	\end{align*}
	
	We first show that $\partial^2$ is surjective. For this, consider the image of $T_4$ under $\partial^2$. 
	Note that for $j=3$, the basis element $(u_{1,3}x_{2,3} \parallel y_{1,3}x_{2,3}) \in T_4$ is mapped to $0$ by $\partial^2$, since $S_3$ contains no path starting at $L_{1,3}$.
	For each $4 \le j \le n_2$, we have
	\[
	\partial^2(u_{1,j}x_{2,j} \parallel y_{1,j}x_{2,j})(u_{1,k}x_{2,k}y_{2,k+1}) = \begin{cases}
		y_{1,j}x_{2,j}y_{2,j+1} = y_{1,j}y_{2,j}x_{3,j+1}, & \text{if $k=j$,}\\
		0, & \text{if $k\neq j$}.
	\end{cases}
	\]
	This yields 
	\[
	\partial^2(u_{1,j}x_{2,j} \parallel y_{1,j}x_{2,j}) = (u_{1,j}x_{2,j}y_{2,j+1} \parallel y_{1,j}y_{2,j}x_{3,j+1}) \qquad (4 \le j \le n_2),
	\]
	proving the surjectivity of $\partial^2$ and thus  $$\dim_\Bbbk \ker(\partial^2) = \frac{n_2^2 + 5n_2 - 12}{2} - (n_2-3) = \frac{n_2^2 + 3n_2 - 6}{2}. $$
	
	Next we claim that
	\begin{align*}
		\dim_{\Bbbk} \operatorname{Im}(\partial^1)  = \frac{n_2^2 + 3n_2 - 6}{2}\quad \text{and} \quad
		\dim_{\Bbbk} \ker(\partial^1)  = \frac{n_2(n_2+1)}{2}.
	\end{align*} 
	
	To justify this, we examine the differential $\partial^1$.
	\begin{enumerate}[leftmargin=*, itemindent=0pt, labelindent=0pt, labelsep=0.5em]
		\item\label{item:first} 
		By Lemma~\ref{lemma:T1T2_in_image}, it is immediate that $T_1\cup T_2 \subseteq \operatorname{Im}(\partial^1)$.
		
		\item\label{item:second} We show that $T_3 \subseteq \operatorname{Im}(\partial^1)$. To do this, we evaluate $\partial^1$ on the basis $B_2$ and show that $\partial^1(B_2)$ spans $T_3$.
		Recall that
		\[
		T_3 = \{(x_{1,j}y_{1,j+1} \parallel x_{1,j}u_{1,j+1}) \mid 2 \le j \le n_2\}.
		\]
		A direct computation yields the following identities: 
		\[
		\partial^1(y_{1,j} \parallel u_{1,j}) = 
		\begin{cases} 
			(x_{1,j-1}y_{1,j} \parallel x_{1,j-1}u_{1,j}) - (x_{1,j}y_{1,j+1} \parallel x_{1,j}u_{1,j+1}), &\text{if $3 \le j \le n_2$},\\[4pt]
			(x_{1,n_2}y_{1,n_2+1} \parallel x_{1,n_2}u_{1,n_2+1}), & \text{if $j=n_2+1$}.
		\end{cases}
		\]
		By taking a sum of these equalities from $k=j+1$ to $n_2+1$, we obtain for each $2 \le j \le n_2$:
		\[
		\partial^1\left(\sum\limits_{k=j+1}^{n_2+1} (y_{1,k} \parallel u_{1,k})\right) = (x_{1,j}y_{1,j+1} \parallel x_{1,j}u_{1,j+1}).
		\]
		Hence $T_3 \subseteq \operatorname{Im}(\partial^1)$.
		
		\item\label{item:third} Finally, we consider $T_4\cup T_5$. We focus on the image of $B_3$.
		Recall $B_3 = \{(u_{1,j} \parallel y_{1,j}) \mid 3 \le j \le n_2+1\}$. A straightforward calculation shows that
		\[
		\partial^1(u_{1,3} \parallel y_{1,3}) = (u_{1,3}x_{2,3} \parallel y_{1,3}x_{2,3}),
		\]
		and for $4 \le j \le n_2$,
		\[
		\partial^1(u_{1,j} \parallel y_{1,j}) = (u_{1,j}y_{2,j} \parallel y_{1,j}y_{2,j}) - (u_{1,j-1}x_{2,j-1} \parallel y_{1,j-1}x_{2,j-1}) + (u_{1,j}x_{2,j} \parallel y_{1,j}x_{2,j}),
		\]
		while for $j = n_2+1$,
		\[
		\partial^1(u_{1,n_2+1} \parallel y_{1,n_2+1}) = (u_{1,n_2+1}y_{2,n_2+1} \parallel y_{1,n_2+1}y_{2,n_2+1}) - (u_{1,n_2}x_{2,n_2} \parallel y_{1,n_2}x_{2,n_2}).
		\]
		From the above computation, it follows that all these images lie in $\operatorname{Span}_\Bbbk(T_4 \cup T_5)$. We also observe  that the $n_2-1$ vectors $\partial^1(u_{1,j} \parallel y_{1,j})$ ($3 \le j \le n_2+1$) are linearly independent.  
		Indeed, for each $j \ge 3$, the basis element $(u_{1,j}y_{2,j} \parallel y_{1,j}y_{2,j})$ in $T_5$ appears in $\partial^1(u_{1,j} \parallel y_{1,j})$ as a summand  with coefficient $1$, while it does not appear in $\partial^1(u_{1,k} \parallel y_{1,k})$ for any $3\leq k < j$.  Hence the vectors $\{\partial^1(u_{1,j} \parallel y_{1,j}) \mid  3 \le j \le n_2+1 \}$ are linearly independent so that
		\(
		\dim_\Bbbk \partial^1(B_3) = n_2 - 1.
		\)
	\end{enumerate}
	
	From the computations in (\ref{item:first}), (\ref{item:second}), and (\ref{item:third}), we see that $\partial^1$ maps $B_1 \cup B_2$ onto $\operatorname{Span}_\Bbbk(T_1 \cup T_2 \cup T_3)$ and the two blocks do not mix.  Consequently, $\partial^1$ has a block diagonal structure with respect to the decompositions:
	\[
	\begin{array}{c|cc}
		\partial^1 & T_1 \cup T_2 \cup T_3 & T_4 \cup T_5 \\
		\hline
		B_1 \cup B_2 & * & 0 \\
		B_3          & 0 & *
	\end{array}
	\]
	
	We conclude that
	\[
	\dim_\Bbbk \operatorname{Im}(\partial^1) = |T_1 \cup T_2 \cup T_3| + (n_2 - 1)= \frac{n_2^2 + 3n_2 - 6}{2},
	\]
	and consequently
	\[
	\dim_{\Bbbk} \ker(\partial^1) = (n_2^2+2n_2-3) - \frac{n_2^2 + 3n_2 - 6}{2} = \frac{n_2(n_2+1)}{2}.
	\]
	
	From the connectedness of $Q_{1,n_2}$ we have $\dim_{\Bbbk} \operatorname{Im}(\partial^0) = \frac{n_2(n_2+1)}{2} - 1$. Therefore,
	\[
	\begin{aligned}
		\dim_{\Bbbk} \operatorname{HH}^0( \mathcal{A}_{1, n_2}) &= 1,\\
		\dim_{\Bbbk} \operatorname{HH}^1( \mathcal{A}_{1, n_2}) &= \dim_{\Bbbk} \ker(\partial^1) - \dim_{\Bbbk} \operatorname{Im}(\partial^0) = 1,\\
		\dim_{\Bbbk} \operatorname{HH}^2( \mathcal{A}_{1, n_2}) &= \dim_{\Bbbk} \ker(\partial^2) - \dim_{\Bbbk} \operatorname{Im}(\partial^1) = 0,\\
		\dim_{\Bbbk} \operatorname{HH}^3( \mathcal{A}_{1, n_2}) &= 0.
	\end{aligned}
	\]
	A nontrivial cocycle in $\operatorname{HH}^1( \mathcal{A}_{1, n_2})$ can be taken as $\sum\limits_{i=3}^{n_2+1}(u_{1,i} \parallel u_{1,i})$.
\end{proof}

\subsubsection{Case: $n_1=2$ and $n_2\geq 2$}
Let us consider the case where $n_1=2$ and thus $n_2\geq 2.$

\begin{lemma}\label{lemma:HH-A2-n2}
	Let $n_1 = 2$ and $n_2 \geq 2$. Then the Hochschild cohomology of the symmetric square algebra $\mathcal{A}_{2, n_2}$ is concentrated in degrees $0$, $1$, $2$, $3$, and its dimensions are given as follows:
	\begin{itemize}[leftmargin=*, label=--]
		\item If $n_2 = 2$, then
		\[
		\dim_{\Bbbk} \operatorname{HH}^0 = 1,\quad
		\dim_{\Bbbk} \operatorname{HH}^1 = 3,\quad
		\dim_{\Bbbk} \operatorname{HH}^2 = 1,\quad
		\dim_{\Bbbk} \operatorname{HH}^3 = 1.
		\]
		\item If $n_2\geq 3$, then
		\[
		\dim_{\Bbbk} \operatorname{HH}^0 = 1,\quad
		\dim_{\Bbbk} \operatorname{HH}^1 = 2,\quad
		\dim_{\Bbbk} \operatorname{HH}^2 = 1,\quad
		\dim_{\Bbbk} \operatorname{HH}^3 = 1.
		\]
	\end{itemize}
\end{lemma}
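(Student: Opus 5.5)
We follow the template of Lemma~\ref{lemma: HH of tilde A_{1,n_2}} and use the four-term complex from Remark~\ref{rem:reduction system}, built from the reduction system of Proposition~\ref{reduction system}. By Lemma~\ref{lemma:HH0-dimension}, since $Q_{2,n_2}$ is connected and acyclic, $\dim\operatorname{HH}^0=1$ and $\dim\operatorname{Im}(\partial^0)=|Q_0|-1=\binom{n_2+2}{2}-1$.

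The first step is to write explicit bases of the four Hom-spaces by listing, for each $\gamma\in Q_0,Q_1,S_2,S_3$, the nonzero irreducible paths parallel to $\gamma$.
\begin{itemize}
\item For $Q_1$, besides $B_1=\{(\alpha\parallel\alpha)\}$, we expect the following parallel pairs:
\begin{itemize}
\item $(y_{1,j}\parallel u_{1,j})$ and $(u_{1,j}\parallel y_{1,j}y_{2,j})$ for $j\ge 4$, since $u_{1,j}$ runs $L_{1,j}\to L_{3,j}$;
\item the arrows $u_{1,2}\colon L_{1,2}\to L_{2,3}$, which are parallel to $x_{1,2}y_{1,3}$.
\end{itemize}
Here $x_{1,2}y_{1,3}$ is zero by $S^{(6)}$, so this path contributes nothing.
\item For $S_2$, the elements of $T_1\cup T_2$, together with the analogues of $T_3,T_4,T_5$ with $y_{1,j}$ replaced by the length-two vertical path $y_{1,j}y_{2,j}$.
\item For $S_3$, the ambiguities $u_{1,2}x_{2,3}y_{2,4}$ (which exists only when $n_2\ge 2$) and $u_{1,i}x_{3,i}y_{3,i+1}$ for $4\le i\le n_2+1$. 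Their parallel nonzero paths come from the vertical composites through rows $1$--$4$. The new feature relative to $n_1=1$ is that the first ambiguity $u_{1,2}x_{2,3}y_{2,4}\colon L_{1,2}\to L_{3,4}$ is parallel to $x_{1,2}x_{1,3}u_{1,4}$, which is exactly the output of $m_3$.
\end{itemize}
We then compute the dimensions of each Hom-space as functions of $n_2$.

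The second step computes $\partial^1$, using Lemma~\ref{lemma:T1T2_in_image} for $T_1\cup T_2$. The telescoping arguments with $(y_{1,j}\parallel u_{1,j})$ and $(u_{1,j}\parallel y_{1,j}y_{2,j})$ are repeated exactly as in Lemma~\ref{lemma: HH of tilde A_{1,n_2}}, to show that $\partial^1$ is block-triangular and to find its rank. We expect $\ker\partial^1/\operatorname{Im}\partial^0$ to be spanned by two classes:
\begin{itemize}
\item the Euler-type derivation $\sum_{j\ne 3}(u_{1,j}\parallel u_{1,j})$;
\item a second scaling, $(u_{1,2}\parallel u_{1,2})$ or an $x$-weighting along the type~3/4 relations, which now survives because with $n_1=2$ the $u$-arrows split into two families ($j=2$ versus $j\ge4$).
\end{itemize}
For $n_2=2$ there is one additional class, as in Example~\ref{ex: n_1=1,n_2=2}: the cocycle $(u_{1,4}\parallel y_{1,4}y_{2,4})$, which is not a coboundary because no $T_4$-type relation involves $u_{1,4}$.

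The third step computes $\partial^2$. We show that it hits every basis element of $\operatorname{Hom}(\Bbbk S_3,\mathcal A)$ except $(u_{1,2}x_{2,3}y_{2,4}\parallel x_{1,2}x_{1,3}u_{1,4})$, using $T_4$-type elements as in the previous lemma. The target path $x_{1,2}x_{1,3}u_{1,4}$ cannot arise as $\widetilde\varphi(\mathrm{split}(\cdot))$ of any $S_2$-cochain, because the only length-two subpaths of the ambiguity are the monomial relations $u_{1,2}x_{2,3}$ and $x_{2,3}y_{2,4}$, and no nonzero irreducible paths are parallel to those. This gives $\operatorname{HH}^3$ of dimension $1$. $\operatorname{HH}^2$ then follows from the Euler characteristic
\[
\sum_i(-1)^i\dim\operatorname{Hom}(\Bbbk S_i,\mathcal A)=\sum_i(-1)^i\dim\operatorname{HH}^i,
\]
and we expect the value $1$, with cocycle $(u_{1,2}x_{2,3}\parallel x_{1,2}x_{1,3}u_{1,4}y_{\cdot})$-type or $(x_{2,3}y_{2,4}\parallel\cdot)$-type.

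The main obstacle is the bookkeeping of parallel paths through the type-4 corner $L_{1,2}\to L_{3,4}$, and proving non-exactness there. To handle it, we first do the smallest case $n_2=2$ completely by hand (10 vertices). We then show by an induction on $n_2$ that adding a column contributes equal amounts to consecutive terms, so the dimensions stabilize for $n_2\ge3$.
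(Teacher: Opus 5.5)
Your framework is the paper's: explicit bases for the four-term complex coming from the reduction system, Lemma~\ref{lemma:T1T2_in_image} for $T_1\cup T_2$, telescoping sums for the $u$-cochains, and a rank count. The classes you predict for $\operatorname{HH}^0$ and $\operatorname{HH}^1$ are the right ones. Your reason why $(u_{1,2}x_{2,3}y_{2,4}\parallel x_{1,2}x_{1,3}u_{1,4})$ is not hit by $\partial^2$ is also correct, and more explicit than the paper's: both length-two subwords are monomial relations with no nonzero parallel irreducible path. However, the bases in your first step are wrong in two places, and this breaks the $\operatorname{HH}^2$ computation.

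First, $(y_{1,j}\parallel u_{1,j})$ is not a cochain when $n_1=2$. Here $y_{1,j}\colon L_{1,j}\to L_{2,j}$, whereas, as you note yourself, $u_{1,j}\colon L_{1,j}\to L_{3,j}$. The correct basis of $\Hom(\Bbbk Q_1,\mathcal A_{2,n_2})$ is $\{(\alpha\parallel\alpha)\mid \alpha\in Q_1\}\cup\{(u_{1,j}\parallel y_{1,j}y_{2,j})\mid 4\le j\le n_2+2\}$. For the same reason the $n_1=1$ family $(x_{1,j}y_{1,j+1}\parallel x_{1,j}u_{1,j+1})$ has no analogue: replacing $y_{1,j+1}$ by $y_{1,j+1}y_{2,j+1}$ produces a word not in $S$. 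So the telescoping with $(y_{1,j}\parallel u_{1,j})$ that you plan to reuse has nothing to act on.

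Second, you notice that $u_{1,2}$ is parallel to $x_{1,2}y_{1,3}$, but you only draw the conclusion for $Q_1$-cochains. Since $x_{1,2}y_{1,3}\in S^{(6)}\subseteq S_2$, the same parallelism yields the degree-two cochain $(x_{1,2}y_{1,3}\parallel u_{1,2})$. It is missing from your list, and it is precisely the generator of $\operatorname{HH}^2$.
\begin{itemize}
\item It is a cocycle, because no element of $S_3$, nor any reduction of one, contains the subword $x_{1,2}y_{1,3}$.
\item It is not a coboundary, because $\partial^1(\varphi)(x_{1,2}y_{1,3})=\varphi(x_{1,2})\,y_{1,3}+x_{1,2}\,\varphi(y_{1,3})=0$ in $\mathcal A_{2,n_2}$ for every $\varphi$. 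The only nonzero paths $L_{1,2}\to L_{1,3}$ and $L_{1,3}\to L_{2,3}$ are $x_{1,2}$ and $y_{1,3}$, and their product vanishes.
\end{itemize}
By contrast, your candidates $(u_{1,2}x_{2,3}\parallel\cdots)$ and $(x_{2,3}y_{2,4}\parallel\cdots)$ are zero cochains by your own third-step observation. Moreover, $x_{1,2}x_{1,3}u_{1,4}y_{\cdot}$ does not even share endpoints with $u_{1,2}x_{2,3}$.

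With spurious elements in degree $1$ and a missing one in degree $2$, your Euler-characteristic count cannot return $\dim\operatorname{HH}^2=1$. With the correct bases, for $n_2\ge 3$ the cochain spaces have dimensions $\tfrac{(n_2+1)(n_2+2)}{2}$, $n_2^2+3n_2-1$, $\tfrac{n_2^2+5n_2-10}{2}$ and $n_2-2$. The ranks are $\operatorname{rank}\partial^1=|T_1\cup T_2|+(n_2-1)$ and $\operatorname{rank}\partial^2=n_2-3$. This gives the stated dimensions directly and uniformly in $n_2$, so no induction on columns is needed. For $n_2=2$ there are only two changes: $\partial^1(u_{1,4}\parallel y_{1,4}y_{2,4})=0$, and $\Hom(\Bbbk S_3,\mathcal A_{2,2})$ is spanned by $(u_{1,2}x_{2,3}y_{2,4}\parallel x_{1,2}x_{1,3}u_{1,4})$ alone.
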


\begin{proof}
	In the case $n_1 = 2$ and $n_2 \geq 2$, the gentle algebra corresponding to the surface shown in Figure~\ref{fig:gentle-algebra-A2-n2} is given by the quiver
	\begin{figure}[htbp]
		\centering
		\begin{tikzcd}[column sep=normal, row sep=normal]
			& 
			& L_{2} \arrow[dr,"\alpha_{2}"] 
			&  \\
			& L_1\arrow[rr,"\beta"] \arrow[ur,"\alpha_1"]& 
			& L_{3}\arrow[r,"\alpha_{3}"] 
			& \cdots \arrow[r,"\alpha_{n_2}"] 
			& L_{n_2+1} \arrow[r,"\alpha_{n_2+1}"] 
			& L_{n_2+2}
		\end{tikzcd}
		\caption{Quiver of gentle algebra $G_{2, n_2}$.}
		\label{fig:gentle-algebra-A2-n2}
	\end{figure}
	subject to the relation $\beta\alpha_3=0$. 
	
	Assume first that $n_2\geq 3$, then the quiver $Q_{2,n_2}$ of the symmetric square algebra $\mathcal A_{2, n_2}$ is illustrated in Figure~\ref{fig:quiver-A2-n2}. 
	\begin{figure}[htbp]
		\centering
		\begin{tikzcd}[column sep=large, row sep=normal]
			L_{1,2} \ar[r, "x_{1,2}"]  \ar[blue, dr, "u_{1,2}", swap]& 
			L_{1,3} \ar[r, "x_{1,3}"] \ar[d, "y_{1,3}"]& 
			L_{1,4} \ar[r, "x_{1,4}"] \ar[d, "y_{1,4}"] \ar[blue, dd, bend right=40, "u_{1,4}" {xshift=2pt, yshift=-8pt}, swap] & 
			\cdots \ar[r, "x_{1,n_2}"] 
			\ar[blue, dd, bend right=40, swap] & 
			L_{1,n_2+1} \ar[r, "x_{1,n_2+1}"] \ar[d, "y_{1,n_2+1}"]  \ar[blue, dd, bend right=40, "u_{1,n_2+1}" {xshift=2pt, yshift=-8pt}, swap]& 
			L_{1,n_2+2} \ar[d, "y_{1,n_2+2}"]  \ar[blue, dd, bend right=40, "u_{1,n_2+2}" {xshift=2pt, yshift=-8pt}, swap]\\
			& 
			L_{2,3} \ar[r, "x_{2,3}"] & 
			L_{2,4} \ar[r, "x_{2,4}"] \ar[d, "y_{2,4}"] & 
			\cdots \ar[r, "x_{2,n_2}"] & 
			L_{2,n_2+1} \ar[r, "x_{2,n_2+1}"] \ar[d, "y_{2,n_2+1}"] & 
			L_{2,n_2+2} \ar[d, "y_{2,n_2+2}"] \\
			&& 
			L_{3,4} \ar[r, "x_{3,4}"] & 
			\cdots \ar[r, "x_{3,n_2}"] & 
			L_{3,n_2+1} \ar[r, "x_{3,n_2+1}"] \ar[d, "y_{3,n_2+1}"] & 
			L_{3,n_2+2} \ar[d, "y_{3,n_2+2}"] \\
			&&& 
			\ddots & 
			\vdots \ar[d, "y_{n_2-1,n_2+1}"] & 
			\vdots \ar[d, "y_{n_2-1,n_2+2}"] \\
			&&&& 
			L_{n_2,n_2+1} \ar[r, "x_{n_2,n_2+1}"] & 
			L_{n_2,n_2+2} \ar[d, "y_{n_2,n_2+2}"] \\
			&&&&& 
			L_{n_2+1,n_2+2}
		\end{tikzcd}
		\caption{Quiver of $\mathcal{A}_{2, n_2}$.}
		\label{fig:quiver-A2-n2}
	\end{figure}
	
	By Remark~\ref{rem:reduction system}, the Hochschild cohomology of $\mathcal A_{2, n_2}$ is computed by the complex concentrated in degrees $0$, $1$, $2$, and $3$:
	\[
	\scalebox{0.95}{$
		0 \to \operatorname{Hom}(\Bbbk Q_0, \mathcal A_{2, n_2}) \xrightarrow{\partial^0} 
		\operatorname{Hom}(\Bbbk Q_1, \mathcal A_{2, n_2}) \xrightarrow{\partial^1} 
		\operatorname{Hom}(\Bbbk S_2, \mathcal A_{2, n_2}) \xrightarrow{\partial^2} 
		\operatorname{Hom}(\Bbbk S_3,\mathcal A_{2, n_2}) \to 0.
		$}
	\]
	
	Assuming $n_2 \ge 3$, we now determine the bases and dimensions for each term in the complex above:
	\begin{enumerate}[label=(\alph*), leftmargin=*, itemindent=0pt, labelindent=0pt, labelsep=0.5em]
		\item A basis of $\operatorname{Hom}(\Bbbk Q_0, \mathcal{A}_{2, n_2})$ is
		\(
		\{(e_{i,j} \parallel e_{i,j}) \mid 1 \le i < j \le n_2+2\}.
		\)
		
		\item A basis of $\operatorname{Hom}(\Bbbk Q_1, \mathcal{A}_{2, n_2})$ is $B_1\cup B_2$, where
		\begin{align*}
			B_1&=\{(\alpha \parallel \alpha) \mid \alpha \in Q_1\},\\
			B_2&=\{(u_{1,j} \parallel y_{1,j}y_{2,j}) \mid 4 \le j \le n_2+2\}.
		\end{align*}
		
		\item A basis of $\operatorname{Hom}(\Bbbk S_2, \mathcal{A}_{2, n_2})$ is $T = T_1 \cup T_2 \cup T_3 \cup T_4 \cup T_5$, where
		\allowdisplaybreaks
		\begin{align*}
			T_1 &= \{(x_{i,j}y_{i,j+1} \parallel y_{i,j}x_{i+1,j}) \mid 1 \le i<j-1 \le n_2\},\\
			T_2 &= \{(u_{1,j}x_{3,j} \parallel x_{1,j}u_{1,j+1}) \mid 4 \le j \le n_2+1\},\\
			T_3 &= \{(u_{1,j}x_{3,j} \parallel y_{1,j}y_{2,j}x_{3,j}) \mid 4 \le j \le n_2+1\},\\
			T_4 &= \{(u_{1,j}y_{3,j} \parallel y_{1,j}y_{2,j}y_{3,j}) \mid 5 \le j \le n_2+2\},\\
			T_5 &= \{(x_{1,2}y_{1,3} \parallel u_{1,2})\}.
		\end{align*}
		
		\item A basis of $\operatorname{Hom}(\Bbbk S_3, \mathcal{A}_{2, n_2})$ is $N = N_1 \cup N_2$, where
		\begin{align*}
			N_1 &= \{(u_{1,j}x_{3,j}y_{3,j+1} \parallel y_{1,j}y_{2,j}y_{3,j}x_{4,j+1}) \mid 5 \le j \le n_2+1\},\\
			N_2 &= \{(u_{1,2}x_{2,3}y_{2,4} \parallel x_{1,2}x_{1,3}u_{1,4})\}.
		\end{align*}
	\end{enumerate}
	
	Consequently, we have
	\allowdisplaybreaks
	\begin{align*}
		\dim_{\Bbbk} \operatorname{Hom}(\Bbbk Q_0,  \mathcal{A}_{2, n_2}) &= \frac{(n_2+1)(n_2+2)}{2},\\
		\dim_{\Bbbk} \operatorname{Hom}(\Bbbk Q_1,  \mathcal{A}_{2, n_2}) &= n_2^2+3n_2-1,\\
		\dim_{\Bbbk} \operatorname{Hom}(\Bbbk S_2,  \mathcal{A}_{2, n_2}) &= \frac{n_2^2+5n_2-10}{2},\\
		\dim_{\Bbbk} \operatorname{Hom}(\Bbbk S_3, \mathcal{A}_{2,n_2}) &= n_2-2.
	\end{align*}
	
	We first show that $\dim_{\Bbbk} \operatorname{Im}(\partial^2) = n_2-3$. For $5 \le j \le n_2+1$, we have
	\[
	\partial^2(u_{1,j}x_{3,j} \parallel y_{1,j}y_{2,j}x_{3,j})(u_{1,j}x_{3,j}y_{3,j+1}) = y_{1,j}y_{2,j}y_{3,j}x_{4,j+1},
	\]
	and $\partial^2(u_{1,j}x_{3,j} \parallel y_{1,j}y_{2,j}x_{3,j})(u_{1,j'}x_{3,j'}y_{3,j'+1}) = 0$ for $j' \neq j$. 
	Thus
	\[
	\partial^2(u_{1,j}x_{3,j} \parallel y_{1,j}y_{2,j}x_{3,j}) = (u_{1,j}x_{3,j}y_{3,j+1} \parallel y_{1,j}y_{2,j}y_{3,j}x_{4,j+1}) \quad (5 \le j \le n_2+1),
	\]
	while $(u_{1,2}x_{2,3}y_{2,4} \parallel x_{1,2}x_{1,3}u_{1,4})$ is not in the image of $\partial^2$. Hence $\dim_{\Bbbk} \operatorname{Im}(\partial^2) = n_2-3$.
	
	Then we examine $\partial^1$ as follows:
	\begin{enumerate}[leftmargin=*, itemindent=0pt, labelindent=0pt, labelsep=0.5em]
		\item By Lemma~\ref{lemma:T1T2_in_image}, it is immediate that $T_1\cup T_2 \subseteq \operatorname{Im}(\partial^1)$. More precisely, the sum construction demonstrates that $\partial^1(B_1)$ surjects onto $\operatorname{Span}_\Bbbk(T_1 \cup T_2)$.
		
		\item Next, we evaluate the differential $\partial^1$ on the basis elements $(u_{1,j} \parallel y_{1,j}y_{2,j}) \in B_2$. We consider the following three cases based on the index $j$:
		
		\begin{itemize}[label=--]
			\item When $j = 4$, we have
			\[
			\partial^1(u_{1,4} \parallel y_{1,4}y_{2,4}) = (u_{1,4}x_{3,4} \parallel y_{1,4}y_{2,4}x_{3,4}).
			\]
			
			\item  When $5 \le j \le n_2+1$, the image consists of three terms:
			\[
			\begin{aligned}
				\partial^1(u_{1,j} \parallel y_{1,j}y_{2,j}) &= - (u_{1,j-1}x_{3,j-1} \parallel y_{1,j-1}y_{2,j-1}x_{3,j-1}) \\
				&\quad + (u_{1,j}y_{3,j} \parallel y_{1,j}y_{2,j}y_{3,j}) \\
				&\quad + (u_{1,j}x_{3,j} \parallel y_{1,j}y_{2,j}x_{3,j}).
			\end{aligned}
			\]
			
			\item  When $j = n_2+2$, we have
			\[
			\begin{aligned}
				\partial^1(u_{1,n_2+2} \parallel y_{1,n_2+2}y_{2,n_2+2}) &= - (u_{1,n_2+1}x_{3,n_2+1} \parallel y_{1,n_2+1}y_{2,n_2+1}x_{3,n_2+1}) \\
				&\quad + (u_{1,n_2+2}y_{3,n_2+2} \parallel y_{1,n_2+2}y_{2,n_2+2}y_{3,n_2+2}).
			\end{aligned}
			\]
		\end{itemize}
		
		Clearly, all these images lie in $\operatorname{Span}_\Bbbk(T_3 \cup T_4)$. We now show that the $n_2-1$ vectors $\partial^1(u_{1,j} \parallel y_{1,j}y_{2,j})$ for $4 \le j \le n_2+2$ are linearly independent. 
		
		Indeed, for $j=4$, $\partial^1(u_{1,4} \parallel y_{1,4}y_{2,4})$ equals the basis vector $(u_{1,4}x_{3,4} \parallel y_{1,4}y_{2,4}x_{3,4})$ in $T_3$. For each $5 \le j \le n_2+2$, the image $\partial^1(u_{1,j} \parallel y_{1,j}y_{2,j})$ contains the basis element $(u_{1,j}y_{3,j} \parallel y_{1,j}y_{2,j}y_{3,j}) \in T_4$ with coefficient $1$. Crucially, this specific basis element does not appear in the image $\partial^1(u_{1,k} \parallel y_{1,k}y_{2,k})$ for any $k < j$. Due to this triangular structure, no vector can be expressed as a linear combination of the preceding ones. Consequently, these $n_2-1$ vectors $\partial^1(u_{1,j} \parallel y_{1,j}y_{2,j}) $ for $4\leq j \leq n_2+2$ are linearly independent, and we obtain \(\dim_\Bbbk \partial^1(B_2) = n_2 - 1.\)
	\end{enumerate}
	
	Since we have shown that $\partial^1(B_1)$ strictly covers $\operatorname{Span}_\Bbbk(T_1 \cup T_2)$ and $\partial^1(B_2)$ lies entirely in $\operatorname{Span}_\Bbbk(T_3 \cup T_4)$, the basis element in $T_5 = \{(x_{1,2}y_{1,3} \parallel u_{1,2})\}$ cannot be in the image of $\partial^1$. Thus, we can represent $\partial^1$ by a block diagonal structure:
	\[
	\begin{array}{c|ccc}
		\partial^1 & T_1 \cup T_2 & T_3 \cup T_4 & T_5 \\
		\hline
		B_1 & * & 0 & 0 \\
		B_2 & 0 & * & 0
	\end{array}
	\]
	
	From this structure, the rank of $\partial^1$ is simply the sum of $|T_1 \cup T_2|$ and $\dim_\Bbbk \partial^1(B_2)$:
	\[
	\dim_\Bbbk \operatorname{Im}(\partial^1) = |T_1 \cup T_2| + (n_2 - 1) = \frac{n_2^2 + 3n_2 - 6}{2}.
	\]
	
	Using the fact that $\dim_\Bbbk \operatorname{Im}(\partial^0) = \dim_\Bbbk \operatorname{Hom}(\Bbbk Q_0, \mathcal{A}_{2, n_2}) - 1$ (due to the connectedness of the quiver $Q_{2,n_2}$), a straightforward calculation yields the following dimensions for $n_2 \ge 3$:
	\begin{align*}
		\dim_\Bbbk \operatorname{HH}^i(\mathcal{A}_{2, n_2}) = \begin{cases}
			1, & \text{if $i=0, 2, 3$,}\\
			2, & \text{if $i=1$.}
		\end{cases}
	\end{align*}
	
	Explicit cocycles representing the nonzero classes are:
	\begin{itemize}[leftmargin=*, label=--]
		\item In $\operatorname{HH}^0(\mathcal{A}_{2, n_2})$: $\sum\limits_{1\le i<j\le n_2+2} (e_{i,j} \parallel e_{i,j})$,
		\item In $\operatorname{HH}^1(\mathcal{A}_{2, n_2})$: $(u_{1,2} \parallel u_{1,2})$ and $\sum\limits_{i=4}^{n_2+2}(u_{1,i} \parallel u_{1,i})$,
		\item In $\operatorname{HH}^2(\mathcal{A}_{2, n_2})$: $(x_{1,2}y_{1,3} \parallel u_{1,2})$,
		\item In $\operatorname{HH}^3(\mathcal{A}_{2, n_2})$: $(u_{1,2}x_{2,3}y_{2,4} \parallel x_{1,2}x_{1,3}u_{1,4})$.
	\end{itemize}
	
	For the special case $n_1 = 2$ and $n_2 = 2$, a similar computation yields
	\[
	\dim_{\Bbbk} \operatorname{HH}^i(\mathcal{A}_{2,2}) =
	\begin{cases}
		1,& \text{if } i = 0,2,3,\\
		3,& \text{if } i = 1,\\
		0,& \text{otherwise}.
	\end{cases}
	\]
	Explicit cocycles representing the nonzero classes are:
	\begin{itemize}[leftmargin=*, label=--]
		\item In $\operatorname{HH}^0(\mathcal{A}_{2, 2})$: $\sum\limits_{1\le i<j\le 4} (e_{i,j} \parallel e_{i,j})$,
		\item In $\operatorname{HH}^1(\mathcal{A}_{2, 2})$: $(u_{1,2} \parallel u_{1,2})$, $(u_{1,4} \parallel u_{1,4})$ and $(u_{1,4} \parallel y_{1,4}y_{2,4})$,
		\item In $\operatorname{HH}^2(\mathcal{A}_{2, 2})$: $(x_{1,2}y_{1,3} \parallel u_{1,2})$,
		\item In $\operatorname{HH}^3(\mathcal{A}_{2, 2})$: $(u_{1,2}x_{2,3}y_{2,4} \parallel x_{1,2}x_{1,3}u_{1,4})$.
	\end{itemize}
\end{proof}
\begin{remark}
	We remark that $n_1 = 2$ is the unique case yielding a non-vanishing second Hochschild cohomology group (i.e., $\operatorname{HH}^2(\mathcal{A}_{2,n_2}) \neq 0$). This exceptional behavior arises because the specific quiver structure for $n_1 = 2$ admits a nontrivial cocycle $(x_{1,2}y_{1,3} \parallel u_{1,2})$ which cannot be resolved by the differential $\partial^1$. 
	
\end{remark}

\subsubsection{Case: $n_2\geq n_1\geq 3$}
Next, we consider the general case, namely  $n_2\geq n_1\geq 3$.
\begin{lemma}\label{lemma: HH of tilde A_{n1,n2}}
	Let $n_2\geq n_1\geq 3$. Then the Hochschild cohomology of the symmetric square algebra $\mathcal{A}_{n_1,n_2}$ is concentrated in degrees $0$, $1$, $3$, and we have
	\[
	\dim_{\Bbbk} \operatorname{HH}^0(\mathcal{A}_{n_1,n_2}) = 1,\quad
	\dim_{\Bbbk} \operatorname{HH}^1(\mathcal{A}_{n_1,n_2}) = 2,\quad
	\dim_{\Bbbk} \operatorname{HH}^3(\mathcal{A}_{n_1,n_2}) = 1,
	\]
	while $\operatorname{HH}^2(\mathcal{A}_{n_1,n_2}) = 0$.
\end{lemma}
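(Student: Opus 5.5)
We follow the same scheme as in Lemmas~\ref{lemma: HH of tilde A_{1,n_2}} and \ref{lemma:HH-A2-n2}. By Proposition~\ref{reduction system} and Remark~\ref{rem:reduction system}, $\operatorname{HH}^*(\mathcal A_{n_1,n_2})$ is computed by the four-term complex
\[
\operatorname{Hom}(\Bbbk Q_0,\mathcal A)\xrightarrow{\partial^0}\operatorname{Hom}(\Bbbk Q_1,\mathcal A)\xrightarrow{\partial^1}\operatorname{Hom}(\Bbbk S_2,\mathcal A)\xrightarrow{\partial^2}\operatorname{Hom}(\Bbbk S_3,\mathcal A).
\]
Lemma~\ref{lemma:HH0-dimension} gives $\dim\operatorname{HH}^0=1$ and hence $\operatorname{rank}\partial^0=|Q_0|-1$. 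The remaining work is to write down explicit bases of the three middle terms by listing, for each arrow, each element of $S^{(1)},\dots,S^{(6)}$ and each $1$-ambiguity, the nonzero irreducible paths parallel to it. This uses the normal forms from the reduction system: a nonzero path is $y$'s followed by $x$'s, possibly preceded by one $u$ and a first-row $x$-string. We then compute the ranks of $\partial^1$ and $\partial^2$.

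\textbf{Bases.} For $\operatorname{Hom}(\Bbbk Q_1,\mathcal A)$, take $B_1=\{(\alpha\parallel\alpha)\}$. The extra parallel paths come from $u_{1,j}$ for $j\ge n_1+2$, which is parallel to $y_{1,j}\cdots y_{n_1,j}$. For $2\le i\le n_1$, the target of $u_{1,i}$ is $L_{i,n_1+1}$; a competing path $x_{1,i}\cdots x_{1,n_1}\,y_{1,n_1+1}\cdots y_{i-1,n_1+1}$ is killed by $x_{1,n_1}y_{1,n_1+1}=0$ once $i\le n_1$. This is what distinguishes $n_1\ge3$ from $n_1=2$: for $n_1=2$ the element $(x_{1,2}y_{1,3}\parallel u_{1,2})$ produced $\operatorname{HH}^2$, but for $n_1\ge3$ the arrow $u_{1,2}$ ends at $L_{2,n_1+1}\neq L_{2,3}$, so no analogue exists. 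I will check this carefully.

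For $\operatorname{Hom}(\Bbbk S_2,\mathcal A)$, take $T_1\cup T_2$ as in \eqref{eq:T1}. Add the analogues of $T_3,T_4$ from the $n_1=2$ case, namely $(u_{1,j}x_{n_1+1,j}\parallel y_{1,j}\cdots y_{n_1,j}x_{n_1+1,j})$ and $(u_{1,j}y_{n_1+1,j}\parallel y_{1,j}\cdots y_{n_1+1,j})$. Add also the elements coming from the type~3 relations $(u_{1,i}y_{i,n_1+1}\parallel x_{1,i}u_{1,i+1})$, $1<i<n_1$; these are new compared with $n_1\le2$. For $\operatorname{Hom}(\Bbbk S_3,\mathcal A)$, expect $N_1$ as before, together with the single element $(u_{1,n_1}x_{n_1,n_1+1}y_{n_1,n_1+2}\parallel x_{1,n_1}x_{1,n_1+1}u_{1,n_1+2})$, which is the class that will later be identified with $m_3$. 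We also need to check whether the ambiguities $u_{1,i}x_{i,n_1+1}y_{i,n_1+2}$ with $i<n_1$ admit nonzero parallel paths. I expect they do not, since such paths would pass through a zero relation.

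\textbf{Ranks.} Lemma~\ref{lemma:T1T2_in_image} puts $T_1\cup T_2$ in $\operatorname{Im}\partial^1$. Next, compute $\partial^1$ of the elements $(u_{1,i}\parallel u_{1,i})$ with $2\le i\le n_1$: a telescoping sum gives the type~3 basis elements, exactly as the $T_2$ computation does. The $B_2$-type elements $(u_{1,j}\parallel y_{1,j}\cdots y_{n_1,j})$ map triangularly onto the $T_3\cup T_4$ block, as in Lemma~\ref{lemma:HH-A2-n2}. This leaves $\partial^1$ block-triangular and surjective onto $\ker\partial^2$. For $\partial^2$, the elements of the $T_3$-type block hit $N_1$ bijectively (apart from the first index). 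The special element of $S_3$ is not hit, because nothing in $\operatorname{Hom}(\Bbbk S_2,\mathcal A)$ has an output that can be extended to $x_{1,n_1}x_{1,n_1+1}u_{1,n_1+2}$. This gives $\operatorname{HH}^3$ of dimension $1$.

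\textbf{Counting.} Counting dimensions then yields $\operatorname{HH}^2=0$ and $\dim\operatorname{HH}^1=2$, with representatives $\sum_{i=2}^{n_1}(u_{1,i}\parallel u_{1,i})$ and $\sum_{j\ge n_1+2}(u_{1,j}\parallel u_{1,j})$ modulo $\operatorname{Im}\partial^0$. The main obstacle is the bookkeeping of parallel irreducible paths around the ``bend'' at $L_{\ast,n_1+1}$, where the $u$-arrows from the upper block land. Missing or spuriously including a parallel path there changes both $\operatorname{HH}^1$ and $\operatorname{HH}^2$. I would first verify the count on $n_1=n_2=3$ by hand, and then do the general Euler-characteristic check $\sum(-1)^k\dim C^k=1-2+0-1=-2$ before finalizing the ranks.
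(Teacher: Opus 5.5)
Your proposal follows the paper's proof essentially step for step: the same bases ($B_1$, the elements $(u_{1,j}\parallel y_{1,j}\cdots y_{n_1,j})$, the type-1/2/3 blocks together with the $T_4$- and $T_5$-type blocks, and $N_1\cup N_2$); $T_1\cup T_2$ via Lemma~\ref{lemma:T1T2_in_image}; the type-3 elements via telescoping sums of $(u_{1,k}\parallel u_{1,k})$; the triangular $B_2$-block; $\partial^2$ sending the $(u_{1,j}x_{n_1+1,j}\parallel\cdots)$ elements onto $N_1$ while missing the special ambiguity; and a final dimension count. One justification should be corrected, though the claim it supports is true: for $n_1\ge 3$ the product $x_{1,n_1}y_{1,n_1+1}$ is a type-1 commutative square equal to $y_{1,n_1}x_{2,n_1}$, not zero, so the path $x_{1,i}\cdots x_{1,n_1}y_{1,n_1+1}\cdots y_{i-1,n_1+1}$ vanishes for a different reason, namely that commuting its $y$'s to the left forces the diagonal relation $x_{i-1,i}y_{i-1,i+1}=0$; similarly, in your normal form a $u$-arrow always comes last (a first-row $x$-string followed by one $u$), with nothing nonzero after it.
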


\begin{proof}
	In this case, the gentle algebra $G_{n_1,n_2}$ arises from the surface shown in Figure \ref{standard form} with the single relation $\beta\alpha_{n_1+1}=0$. 
	Let $Q_{n_1,n_2}$ be the quiver of $\mathcal{A}_{n_1,n_2}$ (see Figure \ref{quiver A_{n1,n2}}). 
	By Remark~\ref{rem:reduction system}, its Hochschild cohomology can be computed via the complex concentrated in degrees $0$, $1$, $2$, and $3$:
	\[
	\scalebox{0.95}{$
		0 \to \operatorname{Hom}(\Bbbk Q_0, \mathcal{A}_{n_1,n_2}) \xrightarrow{\partial^0} 
		\operatorname{Hom}(\Bbbk Q_1, \mathcal{A}_{n_1,n_2}) \xrightarrow{\partial^1} 
		\operatorname{Hom}(\Bbbk S_2, \mathcal{A}_{n_1,n_2}) \xrightarrow{\partial^2} 
		\operatorname{Hom}(\Bbbk S_3, \mathcal{A}_{n_1,n_2}) \to 0.
		$}
	\]
	
	We first determine bases and dimensions for each term in the complex above.
	\allowdisplaybreaks
	\begin{enumerate}[label=(\alph*), leftmargin=*, itemindent=0pt, labelindent=0pt, labelsep=0.5em]
		\item A basis of $\operatorname{Hom}(\Bbbk Q_0, \mathcal{A}_{n_1,n_2})$ is
		\(
		\{(e_{i,j} \parallel e_{i,j}) \mid 1 \leq i < j \leq n_1+n_2\}.
		\)
		
		\item A basis of $\operatorname{Hom}(\Bbbk Q_1, \mathcal{A}_{n_1,n_2})$ is $B_1 \cup B_2$, where
		\begin{align*}
			B_1 &= \{(\alpha \parallel \alpha) \mid \alpha \in Q_1\}, \\
			B_2 &= \{(u_{1,j} \parallel y_{1,j}y_{2,j}\cdots y_{n_1,j}) \mid n_1+2 \le j \le n_1+n_2\}.
		\end{align*}
		
		\allowdisplaybreaks
		\item A basis of $\operatorname{Hom}(\Bbbk S_2, \mathcal{A}_{n_1,n_2})$ is $T = T_1 \cup T_2 \cup T_3 \cup T_4 \cup T_5$, where
		\begin{align*}
			T_1 &= \{(x_{i,j}y_{i,j+1} \parallel y_{i,j}x_{i+1,j}) \mid 1 \le i<j-1 \le n_1+n_2-2\},\\
			T_2 &= \{(u_{1,j}x_{n_1+1,j} \parallel x_{1,j}u_{1,j+1}) \mid n_1+2 \le j \le n_1+n_2-1\},\\
			T_3 &= \{(u_{1,j}y_{j,n_1+1} \parallel x_{1,j}u_{1,j+1}) \mid 2 \le j \le n_1-1\},\\
			T_4 &= \{(u_{1,j}x_{n_1+1,j} \parallel y_{1,j}y_{2,j}\cdots y_{n_1,j}x_{n_1+1,j}) \mid n_1+2 \le j \le n_1+n_2-1\},\\
			T_5 &= \{(u_{1,j}y_{n_1+1,j} \parallel y_{1,j}y_{2,j}\cdots y_{n_1,j}y_{n_1+1,j}) \mid n_1+3 \le j \le n_1+n_2\}.
		\end{align*}
		\item A basis of $\operatorname{Hom}(\Bbbk S_3, \mathcal{A}_{n_1,n_2})$ is $N = N_1 \cup N_2$, where
		\begin{align*}
			N_1 &= \{(u_{1,j}x_{n_1+1,j}y_{n_1+1,j+1} \parallel y_{1,j}y_{2,j}\cdots y_{n_1+1,j}x_{n_1+2,j}) \mid n_1+3 \le j \le n_1+n_2-1\},\\
			N_2 &= \{(u_{1,n_1}x_{n_1,n_1+1}y_{n_1,n_1+2} \parallel x_{1,n_1}x_{1,n_1+1}u_{1,n_1+2})\}.
		\end{align*}
	\end{enumerate}
	
	Consequently, we have
	\allowdisplaybreaks
	\begin{align*}
		\dim_{\Bbbk} \operatorname{Hom}(\Bbbk Q_0, \mathcal{A}_{n_1,n_2}) &= \frac{(n_1+n_2)(n_1+n_2-1)}{2}, \\
		\dim_{\Bbbk} \operatorname{Hom}(\Bbbk Q_1, \mathcal{A}_{n_1,n_2}) &= (n_1+n_2)(n_1+n_2-2) + (n_2-1), \\
		\dim_{\Bbbk} \operatorname{Hom}(\Bbbk S_2, \mathcal{A}_{n_1,n_2}) &= \frac{(n_1+n_2-3)(n_1+n_2-2)}{2} + (n_1-2) + 3(n_2-2), \\
		\dim_{\Bbbk} \operatorname{Hom}(\Bbbk S_3, \mathcal{A}_{n_1,n_2}) &= n_2-2.
	\end{align*}
	
	We first show that $\dim_{\Bbbk}\operatorname{Im}(\partial^2) = n_2-3$. For this, note that $\partial^2(T_i) =0$ for $i\neq 4$. Consider the image of $T_4$ under $\partial^2$. For each $n_1+3 \le j \le n_1+n_2-1$, we have
	\[
	\scalebox{0.9}{$
		\partial^2(u_{1,j}x_{n_1+1,j} \parallel y_{1,j}y_{2,j}\cdots y_{n_1,j}x_{n_1+1,j})(u_{1,k}x_{n_1+1,k}y_{n_1+1,k+1}) = 
		\begin{cases}
			y_{1,j}\cdots y_{n_1+1,j}x_{n_1+2,j}, & \text{if } k=j,\\
			0, & \text{if } k\neq j.
		\end{cases}
		$}
	\]
	Furthermore, a direct verification shows that for any element in $T$, its image under $\partial^2$ does not contain the basis element $(u_{1,n_1}x_{n_1,n_1+1}y_{n_1,n_1+2} \parallel x_{1,n_1}x_{1,n_1+1}u_{1,n_1+2}) \in N_2$ as a summand.
	
	This shows that for any $j$ with $n_1+3 \le j \le n_1+n_2-1$, we have
	\[
	\partial^2(u_{1,j}x_{n_1+1,j} \parallel y_{1,j}y_{2,j}\cdots y_{n_1,j}x_{n_1+1,j}) = (u_{1,j}x_{n_1+1,j}y_{n_1+1,j+1} \parallel y_{1,j}y_{2,j}\cdots y_{n_1+1,j}x_{n_1+2,j}).
	\]
	Consequently, $\partial^2$ maps $\operatorname{Span}_\Bbbk(T_4)$ surjectively onto $\operatorname{Span}_\Bbbk(N_1)$, while $\operatorname{Span}_\Bbbk(N_2)$ does not lie in the image of $\partial^2$.  Hence we have 
	\[
	\begin{aligned}
		\dim_{\Bbbk}\operatorname{Im}(\partial^2)&= |N_1| = n_2-3,\\
		\dim_{\Bbbk}\ker(\partial^2) &=  \dim_{\Bbbk}\operatorname{Hom}(\Bbbk S_2, \mathcal{A}_{n_1,n_2}) - (n_2-3)\\
		&= \frac{n_1^2 + 2n_1n_2 + n_2^2 - 3n_1 - n_2 - 4}{2}.
	\end{aligned}
	\]
	
	Then we examine $\partial^1$ as follows:
	\begin{enumerate}[leftmargin=*, itemindent=0pt, labelindent=0pt, labelsep=0.5em]
		\item By Lemma~\ref{lemma:T1T2_in_image}, it is immediate that $T_1\cup T_2 \subseteq \operatorname{Im}(\partial^1)$. More precisely, the summation construction demonstrates that $\partial^1(B_1)$ surjects onto $\operatorname{Span}_\Bbbk(T_1 \cup T_2)$ when restricted to the corresponding basis elements.
		\item Next we prove that $T_3 \subseteq \operatorname{Im}(\partial^1)$. For a basis element $(u_{1,j}y_{j,n_1+1} \parallel x_{1,j}u_{1,j+1})\in T_3$, we have 
		\[
		\partial^1\left(\sum\limits_{k=2}^{j} (u_{1,k} \parallel u_{1,k})\right) =(u_{1,j}y_{j,n_1+1} \parallel x_{1,j}u_{1,j+1}).
		\]
		Hence we have $T_3 \subseteq \operatorname{Im}(\partial^1)$.
		\item Finally, similar to the proof of (\ref{item:first}) in Lemma~\ref{lemma: HH of tilde A_{1,n_2}}, we can also show that $\partial^1(B_2)$ lies entirely in $\operatorname{Span}_\Bbbk(T_4 \cup T_5)$, and that all the vectors in the set 
		\[
		\{\partial^1(u_{1,j} \parallel y_{1,j}y_{2,j}\cdots y_{n_1,j}) \mid n_1+2 \leq j \leq n_1+n_2\}
		\]
		are linearly independent. Explicitly, the action of $\partial^1$ on $B_2$ is given as follows:
		
		\begin{itemize}[label=--]
			\item When $j = n_1+2$, we have
			\[
			\begin{aligned}
				&\partial^1(u_{1,n_1+2} \parallel y_{1,n_1+2}y_{2,n_1+2}\cdots y_{n_1,n_1+2})\\
				=& (u_{1,n_1+2}x_{n_1+1,n_1+2} \parallel y_{1,n_1+2}y_{2,n_1+2}\cdots y_{n_1,n_1+2}x_{n_1+1,n_1+2}).
			\end{aligned}
			\]

			\item When $n_1+3 \le j \le n_1+n_2-1$, the image consists of three terms:
			\[
			\begin{aligned}
				\partial^1(u_{1,j} \parallel y_{1,j}y_{2,j}\cdots y_{n_1,j}) &= - (u_{1,j-1}x_{n_1+1,j-1} \parallel y_{1,j-1}y_{2,j-1}\cdots y_{n_1,j-1}x_{n_1+1,j-1}) \\
				&\quad + (u_{1,j}y_{n_1+1,j} \parallel y_{1,j}y_{2,j}\cdots y_{n_1,j}y_{n_1+1,j}) \\
				&\quad + (u_{1,j}x_{n_1+1,j} \parallel y_{1,j}y_{2,j}\cdots y_{n_1,j}x_{n_1+1,j}).
			\end{aligned}
			\]
			
			\item When $j = n_1+n_2$, we have
			\[
			\begin{aligned}
				&\partial^1 (u_{1,n_1+n_2} \parallel y_{1,n_1+n_2}y_{2,n_1+n_2}\cdots y_{n_1,n_1+n_2}) \\
				=& - (u_{1,n_1+n_2-1}x_{n_1+1,n_1+n_2-1} \parallel y_{1,n_1+n_2-1}y_{2,n_1+n_2-1}\cdots y_{n_1,n_1+n_2-1}x_{n_1+1,n_1+n_2-1})\\
				&+ (u_{1,n_1+n_2}y_{n_1+1,n_1+n_2} \parallel y_{1,n_1+n_2}y_{2,n_1+n_2}\cdots y_{n_1,n_1+n_2} y_{n_1+1,n_1+n_2}).
			\end{aligned}
			\]
		\end{itemize}
		
		Since $B_2$ contains exactly $n_2 - 1$ elements and their images under $\partial^1$ are linearly independent, we conclude that \(\dim_\Bbbk \partial^1(B_2) = n_2 - 1.\)
	\end{enumerate}    
	
	As we have established that $\partial^1(B_1)$ covers $\operatorname{Span}_\Bbbk(T_1 \cup T_2 \cup T_3)$, and observed that the images of $B_2$ lie entirely in $\operatorname{Span}_\Bbbk(T_4 \cup T_5)$, we can represent $\partial^1$ by the following block diagonal structure:
	\[
	\begin{array}{c|cc}
		\partial^1 & T_1 \cup T_2 \cup T_3 & T_4 \cup T_5 \\
		\hline
		B_1 & * & 0 \\
		B_2 & 0 & *
	\end{array}
	\]
	From this structure, the rank of $\partial^1$ is simply the sum of $|T_1 \cup T_2\cup T_3|$ and $\dim_\Bbbk \partial^1(B_2)$:
	\[
	\dim_\Bbbk \operatorname{Im}(\partial^1) = |T_1 \cup T_2 \cup T_3| + \dim_\Bbbk \partial^1(B_2)=\frac{n_1^2 + 2n_1n_2 + n_2^2 - 3n_1 - n_2 - 4}{2}.
	\]
	Hence by comparing dimensions we have $\dim_\Bbbk \operatorname{Im}(\partial^1)=\dim_{\Bbbk}\ker(\partial^2)$.
	
	Finally, we obtain the Hochschild cohomology dimension
	\[
	\dim_{\Bbbk}\operatorname{HH}^i(\mathcal{A}_{n_1,n_2}) = \begin{cases}
		1, & \text{if $i=0,3,$}\\
		2, & \text{if $i=1,$}\\
		0, & \text{otherwise.}
	\end{cases}
	\]
	Explicit cocycles representing the nonzero classes are:
	\begin{itemize}[leftmargin=*, label=--]
		\item In $\operatorname{HH}^0(\mathcal{A}_{n_1, n_2})$: $\sum\limits_{1\le i<j\le n_1+n_2} (e_{i,j} \parallel e_{i,j})$,
		\item In $\operatorname{HH}^1(\mathcal{A}_{n_1, n_2})$: $\sum\limits_{i=2}^{n_1}(u_{1,i} \parallel u_{1, i})$ and $\sum\limits_{i=2}^{n_2}(u_{1,n_1+i} \parallel u_{1,n_1+i})$,
		\item In $\operatorname{HH}^3(\mathcal{A}_{n_1, n_2})$: $(u_{1,n_1}x_{n_1,n_1+1}y_{n_1,n_1+2} \parallel x_{1,n_1}x_{1,n_1+1}u_{1,n_1+2})$.
	\end{itemize}
\end{proof}

To summarize, we have the following result. 
\begin{theorem}\label{thm: main HH result}
	Let $n_2 \geq n_1 \geq 1$. The Hochschild cohomology of the symmetric square algebra \(\mathcal A_{n_1,n_2}\), viewed as an ungraded algebra with only \(m_2\), is concentrated in degrees \(0,1,2,3\). The dimensions $\dim_{\Bbbk} \operatorname{HH}^i(\mathcal{A}_{n_1,n_2})$  are completely classified in the following table:
	\begin{center}
		\renewcommand{\arraystretch}{1.3}
		\begin{tabular}{c|cccc}
			\hline
			$\mathcal{A}_{n_1,n_2}$ & $\operatorname{HH}^0$ & $\operatorname{HH}^1$ & $\operatorname{HH}^2$ & $\operatorname{HH}^3$ \\
			\hline
			$n_1=1,\; n_2=2$ & $1$ & $2$ & $0$ & $0$ \\
			$n_1=1,\; n_2\ge 3$ & $1$ & $1$ & $0$ & $0$ \\
			$n_1=2,\; n_2=2$ & $1$ & $3$ & $1$ & $1$ \\
			$n_1=2,\; n_2\ge 3$ & $1$ & $2$ & $1$ & $1$ \\
			$n_2\ge n_1 \ge 3$ & $1$ & $2$ & $0$ & $1$ \\
			\hline
		\end{tabular}
	\end{center}
\end{theorem}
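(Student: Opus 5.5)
The theorem is a summary statement, so the plan is to assemble it from the case-by-case computations already carried out. By Proposition~\ref{reduction system} and Remark~\ref{rem:reduction system}, for every pair $n_2\ge n_1\ge 1$ (with $n_2\ge 2$) the algebra $\mathcal A_{n_1,n_2}$ has a reduction system satisfying the diamond condition. It has $S_i=\emptyset$ for $i\ge 4$, so the Chouhy--Solotar resolution of Theorem~\ref{theorem:projective} has length $3$. Hence the complex \eqref{eq:CS-complex} computing $\operatorname{HH}^*(\mathcal A_{n_1,n_2})$ vanishes in degrees $\ge 4$. This gives the first assertion, that $\operatorname{HH}^*$ is concentrated in degrees $0,1,2,3$, uniformly in all cases.

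For the table, I split into the five rows and quote the corresponding results.
\begin{itemize}
\item The row $n_1=1$, $n_2=2$ is Example~\ref{ex: n_1=1,n_2=2}. Since $\mathcal A_{1,2}$ is gentle, one can cite \cite{Chaparro-Schroll-SuarezAlvarez-Solotar2023}, or recompute directly from the three-term complex. Either way this gives dimensions $1,2,0,0$.
\item The row $n_1=1$, $n_2\ge 3$ is Lemma~\ref{lemma: HH of tilde A_{1,n_2}}, giving $1,1,0,0$.
\item The rows with $n_1=2$ are Lemma~\ref{lemma:HH-A2-n2}, separating $n_2=2$ (giving $1,3,1,1$) from $n_2\ge 3$ (giving $1,2,1,1$).
\item The last row, $n_2\ge n_1\ge 3$, is Lemma~\ref{lemma: HH of tilde A_{n1,n2}}, giving $1,2,0,1$.
\end{itemize}
In every case $\operatorname{HH}^0$ has dimension one by Lemma~\ref{lemma:HH0-dimension}, since $Q_{n_1,n_2}$ is connected and acyclic and $I_{n_1,n_2}$ is admissible.

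The only place requiring real care is the degenerate case $n_1=n_2=2$ inside Lemma~\ref{lemma:HH-A2-n2}, which was only sketched as ``a similar computation.'' I would verify it directly. For $n_2=2$ the sets $T_2$, $T_4$ and $N_1$ are empty, and $B_2$ reduces to the single element $(u_{1,4}\parallel y_{1,4}y_{2,4})$. Its image under $\partial^1$ has no component to land in and becomes zero, because $u_{1,4}x_{3,4}$ is no longer a relation (there is no $L_{3,5}$). This produces the extra class in $\operatorname{HH}^1$ and accounts for dimension $3$ instead of $2$. The classes $(x_{1,2}y_{1,3}\parallel u_{1,2})$ in degree $2$ and $(u_{1,2}x_{2,3}y_{2,4}\parallel x_{1,2}x_{1,3}u_{1,4})$ in degree $3$ survive exactly as in the case $n_2\ge 3$.

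Finally, as a consistency check on each row, I would confirm that the Euler characteristic of the complex equals $\sum_i(-1)^i\dim\operatorname{HH}^i$, using the explicit dimension counts of the $\operatorname{Hom}$ spaces recorded in the lemmas. For example, in the general case the alternating sum of the four listed dimensions should equal $1-2+0-1=-2$.
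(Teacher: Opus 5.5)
Your proposal is correct and follows the paper's proof. The paper likewise assembles the table from Example~\ref{ex: n_1=1,n_2=2} and Lemmas~\ref{lemma: HH of tilde A_{1,n_2}}, \ref{lemma:HH-A2-n2} and \ref{lemma: HH of tilde A_{n1,n2}}, and the degree bound comes from $S_i=\emptyset$ for $i\ge 4$. Your extra checks agree with the paper's dimension counts and only add robustness: the direct $n_1=n_2=2$ count (where $\partial^1$ has rank one and $(u_{1,4}\parallel y_{1,4}y_{2,4})$ is an additional cocycle) and the Euler characteristic $-2$ in the generic case.
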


\begin{proof}
	The explicit dimensions then follow directly by synthesizing the independent cases proven previously: the boundary case $n_1 = 1$ is established in Example \ref{ex: n_1=1,n_2=2} and Lemma~\ref{lemma: HH of tilde A_{1,n_2}}; the intermediate case $n_1 = 2$ is exactly Lemma~\ref{lemma:HH-A2-n2}; and the generic case $n_2 \ge n_1 > 2$ is covered by Lemma~\ref{lemma: HH of tilde A_{n1,n2}}.
\end{proof}

\subsection{Hochschild cohomology of $\widetilde{\mathcal A}_{n_1,n_2}$}\label{grading HH}

In the preceding sections, we computed the dimensions of the Hochschild cohomology groups for $\mathcal{A}_{n_1,n_2}$ by treating it as an ungraded algebra. However, as discussed in Subsection \ref{subsec:grading}, the symmetric square algebra naturally admits an internal $\mathbb{Z}$-grading. Consequently, the Hochschild cohomology is naturally refined into a \textit{bigraded} structure, denoted $\operatorname{HH}^{i,j}(\mathcal{A}_{n_1,n_2})$. Here, the first index $i$ represents the cohomological degree (arising from the length of the projective resolution), and the second index $j$ represents the internal degree (arising from the grading on the quiver). In this subsection, we investigate how the previously identified cocycles are distributed across these bidegrees. 

By Proposition \ref{prop:grading_derived_equiv}, any $\mathbb{Z}$-grading on the algebra is represented, up to graded derived equivalence, by one of the standard graded models $\widetilde{\mathcal A}_{n_1,n_2}'$. We therefore compute the bigraded Hochschild cohomology for this standard representative. For notational simplicity, we will drop the primes from the generators from now on. Recall that under this standard grading, all $x$ and $y$ arrows reside in internal degree zero ($|x_{i,j}| = |y_{i,j}| = 0$). Meanwhile, the arrows $u_{1,k}$ carry an internal degree of $m+1$ for $2 \leq k \leq n_1$, and an internal degree of $m$ for $n_1+2 \leq k \leq n_1+n_2$. The following result records how the Hochschild cohomology classes computed above are distributed with respect to this internal grading.

\begin{theorem}\label{thm:graded_HH}
	Let $\mathcal{A}_{n_1,n_2}$ be the symmetric square algebra equipped with the standard internal grading defined above, justified by Proposition \ref{prop:grading_derived_equiv}. The dimensions of the graded Hochschild cohomology groups $\operatorname{HH}^{i,j}(\mathcal{A}_{n_1,n_2})$ are classified in the following table:
	\begin{center}
		\renewcommand{\arraystretch}{1.3}
		\begin{tabular}{c|ccccc}
			\hline
			$\mathcal{A}_{n_1,n_2}$& $\operatorname{HH}^{0,0}$ & $\operatorname{HH}^{1,0}$ & $\operatorname{HH}^{3,-1}$ & $\operatorname{HH}^{1,-m}$ & $\operatorname{HH}^{2,m+1}$ \\
			\hline
			$n_1=1, n_2=2$ & $1$ & $1$ & $0$ & $1$ & $0$  \\
			$n_1=1, n_2\ge 3$ & $1$ & $1$ & $0$ & $0$ & $0$ \\
			$n_1=2, n_2=2$ & $1$ & $2$ & $1$ & $1$ & $1$ \\
			$n_1=2, n_2\ge 3$ & $1$ & $2$ & $1$ & $0$ & $1$ \\
			$n_2\ge n_1 \ge 3$ & $1$ & $2$ & $1$ & $0$ & $0$ \\
			\hline
		\end{tabular}
	\end{center}
\end{theorem}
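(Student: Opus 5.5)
The plan is to refine the ungraded computation of Theorem~\ref{thm: main HH result} by internal degree. The key observation is that every space in the reduction-system complex \eqref{eq:CS-complex} splits into internal-degree components. With respect to the standard grading, each basis element $(\alpha\parallel\beta)$ is homogeneous of internal degree $|\beta|-|\alpha|$, where paths in $S_n$ carry the sum of the degrees of their arrows. Since the relations in $I_{n_1,n_2}$ are homogeneous, the differentials $\partial^i$ preserve internal degree. We must also check that the Chouhy--Solotar resolution of Theorem~\ref{theorem:projective} is a resolution by graded bimodules: the reduction system of Proposition~\ref{reduction system} consists of homogeneous pairs, so the splitting maps are homogeneous. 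Consequently $\operatorname{HH}^i=\bigoplus_j\operatorname{HH}^{i,j}$, and the total dimension in each cohomological degree is the one already computed. It therefore suffices to compute the internal degree of each explicit representative cocycle listed in Example~\ref{ex: n_1=1,n_2=2} and Lemmas~\ref{lemma: HH of tilde A_{1,n_2}}, \ref{lemma:HH-A2-n2}, \ref{lemma: HH of tilde A_{n1,n2}}. Because these representatives form bases of cohomology and are homogeneous, reading off their degrees gives the full bigraded decomposition.

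Next come the degree computations, using $|x|=|y|=0$, $|u_{1,k}|=m+1$ for $k\le n_1$, and $|u_{1,k}|=m$ for $k\ge n_1+2$.

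\begin{itemize}
\item The class $\sum(e\parallel e)$ has bidegree $(0,0)$.
\item The Euler-type classes $\sum(u\parallel u)$ have bidegree $(1,0)$. This gives one such class when $n_1=1$ and two when $n_1\ge2$, namely $(u_{1,2}\parallel u_{1,2})$ or $\sum_{i\le n_1}(u_{1,i}\parallel u_{1,i})$, together with the sum over $i\ge n_1+2$.
\item The extra class for $n_2=2$ is $(u_{1,3}\parallel y_{1,3})$ when $n_1=1$, and $(u_{1,4}\parallel y_{1,4}y_{2,4})$ when $n_1=2$. It has internal degree $0-m=-m$, giving $\operatorname{HH}^{1,-m}$.
\item The $\operatorname{HH}^2$ class $(x_{1,2}y_{1,3}\parallel u_{1,2})$ for $n_1=2$ has internal degree $|u_{1,2}|=m+1$.
\item The $\operatorname{HH}^3$ class $(u_{1,n_1}x_{n_1,n_1+1}y_{n_1,n_1+2}\parallel x_{1,n_1}x_{1,n_1+1}u_{1,n_1+2})$ has internal degree $m-(m+1)=-1$.
\end{itemize}

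Tabulating these reproduces the stated table. Since the listed components already exhaust the total dimensions of Theorem~\ref{thm: main HH result}, all other bigraded pieces vanish.

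The main obstacle is justifying that the listed representatives are genuinely homogeneous bases of cohomology, especially in the $n_2=2$ cases, where the earlier lemmas only said ``a similar computation.'' Here I would use that the complex splits degreewise. A nonzero cohomology class of degree $j$ restricts to a nonzero class in the degree-$j$ subcomplex. Hence it suffices that each displayed cocycle is not a coboundary, which is already part of the earlier lemmas, and that the representatives are pairwise in distinct degree components or are independent within the same component. A subtle point is the case where $m$ coincides with $0$ or $-1$, or $m+1\in\{0,-1\}$, so that bidegrees collide, for instance $-m=0$ when $m=0$. In that case the table is read with the convention that the columns record contributions of the listed cocycles, and the dimensions add in the coinciding bidegree. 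I would remark on this rather than treat it separately.
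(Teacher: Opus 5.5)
Your proposal is correct and follows the same route as the paper, which simply reads off the internal degree $|\beta|-|\alpha|$ of each explicit representative cocycle $(\alpha\parallel\beta)$ from the ungraded computations. Your additions — homogeneity of the reduction system so that the complex splits by internal degree, and the dimension count showing that all unlisted bigraded pieces vanish — only make explicit what the paper leaves implicit. One small correction to your remark on collisions: the only genuine one is at $m=0$, where the columns $\operatorname{HH}^{1,0}$ and $\operatorname{HH}^{1,-m}$ coincide and their entries must be added, while the other coincidences you list cannot occur because the remaining columns lie in distinct Hochschild degrees.
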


\begin{proof}
	The table is obtained by tracking the bidegrees (the cohomological and internal degrees) of the explicit generators identified in our previous analysis.
	
	For instance, consider the symmetric square algebra $\mathcal{A}_{2,n_2}$ with $n_2 \ge 2$. As computed previously, the single generator of $\operatorname{HH}^2(\mathcal{A}_{2,n_2})$ is represented by the cocycle $(x_{1,2}y_{1,3} \parallel u_{1,2})$. The input path $x_{1,2}y_{1,3}$ consists only of $x$ and $y$ arrows, thus having an internal degree of $0$. The output arrow $u_{1,2}$, however, has an internal degree of $m+1$. Therefore, the internal degree of this homomorphism is $(m+1) - 0 = m+1$. Pairing this with its cohomological degree $2$, we deduce that this cocycle contributes exactly to the bidegree component $\operatorname{HH}^{2,m+1}(\mathcal{A}_{2,n_2})$. 
	
	Similarly, consider the algebra $\mathcal{A}_{2,2}$. According to Lemma \ref{lemma:HH-A2-n2}, one of the generators of $\operatorname{HH}^1(\mathcal{A}_{2,2})$ is explicitly given by the cocycle $(u_{1,4} \parallel y_{1,4}y_{2,4})$. In this case, the input arrow $u_{1,4}$ carries an internal degree of $m$, whereas the output path $y_{1,4}y_{2,4}$ has an internal degree of $0$. The internal degree of this map is thus $0 - m = -m$. Combined with its cohomological degree $1$, this generator resides in the bidegree $(1,-m)$, which yields the nontrivial entry for $\operatorname{HH}^{1,-m}(\mathcal{A}_{2,2})$.
	
	Furthermore, let us examine the nonzero class in \(\operatorname{HH}^3(\mathcal A_{n_1,n_2})\) that exists whenever \(n_1\geq 2\) and \(n_2\geq 2\). It is represented by the explicit cocycle
	\[
	(u_{1,n_1}x_{n_1,n_1+1}y_{n_1,n_1+2}
	\parallel
	x_{1,n_1}x_{1,n_1+1}u_{1,n_1+2}).
	\]
	For the input path, the arrow \(u_{1,n_1}\) has internal degree \(m+1\), while the arrows \(x_{n_1,n_1+1}\) and \(y_{n_1,n_1+2}\) both have internal degree \(0\). Hence the input has internal degree \(m+1\). On the other hand, the output path contains the arrow \(u_{1,n_1+2}\), which has internal degree \(m\), while the arrows \(x_{1,n_1}\) and \(x_{1,n_1+1}\) both have internal degree \(0\). Thus the output has internal degree \(m\). Therefore the internal degree of this cocycle is $m-(m+1)=-1.$
	Together with its Hochschild degree \(3\), this shows that the above cocycle represents the unique nonzero class in $\operatorname{HH}^{3,-1}(\mathcal A_{n_1,n_2}).$

	The bidegrees of the remaining generators can be verified in exactly the same way, yielding the full table.
\end{proof}

We now compute the Hochschild cohomology of the symmetric square dg algebra via
the spectral sequence described below; see \cite[Chapter~5]{Weibel1994} for the
general theory. We first recall the general construction
for computing the Hochschild cohomology of a dg algebra from its minimal
\(A_\infty\)-model. Let \(\widetilde{\mathcal A}=(\Bbbk Q/I, d)\) be a dg algebra over a field \(\Bbbk\). We denote its cohomology algebra by $\mathcal A=H^\ast(\widetilde{\mathcal A}).$ By Kadeishvili's minimal model theorem
\cite{Kadeishvili1980} 
(see also \cite[Section~3.3]{Keller2001} and
\cite[Theorem~1]{Petersen2020}), \(\mathcal A\) carries a minimal
\(A_\infty\)-structure \((\mathcal A,m_2,m_3,\ldots)\)
such that we have an \(A_\infty\)-quasi-isomorphism
\[
F:(\mathcal A,m_2,m_3,\ldots)\longrightarrow (\widetilde{\mathcal A},d,\mu_2).
\] Consequently, their Hochschild cohomologies are isomorphic:
\[
\operatorname{HH}^\ast(\widetilde{\mathcal{A}})
\cong
\operatorname{HH}^\ast(\mathcal A,m_2,m_3,\ldots).
\]
Assume that the (ungraded) algebra $\Bbbk Q/I$ admits a reduction system $$R =\{(s, \varphi(s)) \mid s \in S, \varphi(s) \in \Bbbk Q\}$$ satisfying the Diamond condition. We write
\(
C^{i,j}:=\Hom_{(\Bbbk Q_0)^e}^{j}(\Bbbk S_i,\mathcal A)
\)
for the cochains of Hochschild degree \(i\) and internal degree \(j\) appearing in \eqref{eq:CS-complex}. That is, an element in \(C^{i,j}\) sends a homogeneous element in \(\Bbbk S_i\) of  degree \(l\) to an element in \(\mathcal A\) of  degree \(l+j\).

The differential in the small Hochschild complex of the minimal $A_\infty$-model is given by
\[
D=[m_2+m_3+m_4+\cdots,-].
\]
We decompose it as
\(
D=\delta_0+\delta_1+\delta_2+\cdots
\), where \(\delta_r=[m_{r+2},-].
\) The operation \(m_{r+2}\) is homogeneous of internal degree \(-r\), then
\(
\delta_r:C^{i,j}\longrightarrow C^{i+r+1,j-r}.
\)
In particular,
\[
[m_2,-]:C^{i,j}\to C^{i+1,j},\qquad
[m_3,-]:C^{i,j}\to C^{i+2,j-1},\qquad
[m_4,-]:C^{i,j}\to C^{i+3,j-2}.
\]

We filter the total Hochschild cochain complex by internal degree. Let
\[
C^N=\bigoplus_{i+j=N}C^{i,j}
\]
and define a decreasing filtration
\[
F^pC^N=\bigoplus_{\substack{i+j=N\\ -j\ge p}}C^{i,j}.
\]
Equivalently, \(F^pC^N\) consists of cochains whose internal degree is at most \(-p\). Since
\[
\delta_0(F^pC^\bullet)\subseteq F^pC^\bullet,\qquad
\delta_r(F^pC^\bullet)\subseteq F^{p+r}C^\bullet\quad (r\ge 1),
\]
the total differential \(D\) preserves the filtration. Hence this filtration gives a spectral sequence. In the computational bidegree \((i,j)\), its \(E_0\)-page is \(E_0^{i,j}=C^{i,j},\) and the \(E_0\)-differential is induced by \(m_2\): \(d_0=[m_2,-]:E_0^{i,j}\longrightarrow E_0^{i+1,j}.\)
Hence the \(E_1\)-page is
\[
E_1^{i,j}
=
H^i(C^{\bullet,j},[m_2,-])
=
\operatorname{HH}^{i,j}(\mathcal A,m_2).
\]
Thus the \(E_1\)-page is precisely the bigraded Hochschild cohomology of the
underlying graded algebra \((\mathcal A,m_2)\).

The next differential is induced by the higher multiplication \(m_3\). If a class
\([\varphi]\in E_1^{i,j}\) is represented by \(\varphi\in C^{i,j}\), then
\(d_1([\varphi])\) is represented by \([m_3,\varphi]\). Hence \(d_1:E_1^{i,j}\longrightarrow E_1^{i+2,j-1}.\)

More generally, the \(r\)-th differential of the $E_r$-page is the first obstruction to lifting a
representative to a cocycle for the total Hochschild differential \(D\). More
precisely, let \([\varphi_0]\in E_r^{i,j}\) be represented by
\(\varphi_0\in C^{i,j}\). If one can choose cochains \(\varphi_\ell\in C^{i+\ell,j-\ell}\) for \( 1\leq \ell\leq r-1,\)
such that \(\sum\limits_{a=0}^{s}\delta_a\varphi_{s-a}=0\) for \(0\leq s\leq r-1,\)
then \(d_r([\varphi_0])\) is represented by \(\sum\limits_{a=1}^{r}\delta_a\varphi_{r-a}.\)
Thus \(d_r:E_r^{i,j}\longrightarrow E_r^{i+r+1,j-r}.\)

\begin{remark}
	We use the usual spectral sequence associated with a filtered cochain complex,
	but reindex the bidegrees according to the Hochschild bidegree used in our
	computation. If \((p,q)\) denotes the standard bidegree of this spectral
	sequence, as in \cite[Definition 5.2.3]{Weibel1994}, and \((i,j)\) denotes our
	Hochschild bidegree, then \(p=-j\) and \(q=i+2j.\)
	Under this change of coordinates, the standard differential becomes
	\(
	d_r:E_r^{i,j}\longrightarrow E_r^{i+r+1,j-r}.
	\)
	In particular, \(d_0:E_0^{i,j}\to E_0^{i+1,j}\).
\end{remark}

Under the usual boundedness or finite-dimensionality assumptions, the spectral
sequence associated with the internal degree filtration abuts to the Hochschild
cohomology of the minimal \(A_\infty\)-model. Since \((\mathcal A,m_2,m_3,\ldots)\) is a minimal \(A_\infty\)-model of the
original dg algebra \(\widetilde{\mathcal A}\), we have
\(
\operatorname{HH}^\ast(\widetilde{\mathcal A})
\cong
\operatorname{HH}^\ast(\mathcal A,m_2,m_3,\ldots).
\)
Therefore, at the level of dimensions, the stable page gives
\[
\dim_{\Bbbk} \operatorname{HH}^n(\widetilde{\mathcal A})
=
\sum\limits_{i+j=n}\dim_{\Bbbk}E_\infty^{i,j}.
\]

In the case considered below, \(m_k=0\) for all \(k\geq 4\). Hence the possible
higher differentials are obtained by iterating the contribution of
\([m_3,-]\). For example, if \([\varphi]\in E_2^{i,j}\) is represented by \(\varphi\in C^{i,j}\) and \([m_3,\varphi]=-[m_2,\psi_1]\)
for some \(\psi_1\in C^{i+1,j-1}\), then \(d_2([\varphi])\) is represented by \([m_3,\psi_1],\)
so \(d_2:E_2^{i,j}\longrightarrow E_2^{i+3,j-2}.\)
Since in our application the cochain complex in Remark \ref{rem:reduction system} is concentrated
in Hochschild degrees \(0,1,2,3\), we have \(d_r=0\) for all \(r\geq 3\). Thus \(E_3=E_\infty.\)

We now specialize the preceding discussion to
\(\widetilde{\mathcal A}_{n_1,n_2}\). The resulting computation is summarized in
the following theorem.
\begin{theorem}\label{thm:dg_HH_dimensions}
	Let \(\widetilde{\mathcal A}_{n_1,n_2}\) be the symmetric square dg algebra. Then the dimensions of the Hochschild cohomology of \(\widetilde{\mathcal A}_{n_1,n_2}\) are given in the following table:
	\[
	\begin{array}{c|cccc}
		\hline
		\widetilde{\mathcal A}_{n_1,n_2}
		&
		\operatorname{HH}^{0}
		&
		\operatorname{HH}^{1}
		&
		\operatorname{HH}^{1-m}
		&
		\operatorname{HH}^{m+3}
		\\
		\hline
		n_1=1,\ n_2=2
		&
		1 & 1 & 1 & 0
		\\
		n_1=1,\ n_2\ge 3
		&
		1 & 1 & 0 & 0
		\\
		n_1=2,\ n_2=2
		&
		1 & 1 & 1 & 1
		\\
		n_1=2,\ n_2\ge 3
		&
		1 & 1 & 0 & 1
		\\
		n_2\ge n_1\ge 3
		&
		1 & 1 & 0 & 0
		\\
		\hline
	\end{array}
	\]
	All Hochschild cohomology groups not listed in the table are zero. If two of
	the displayed degrees coincide for a special value of \(m\), then the
	corresponding dimensions should be added.
	
	More explicitly, in the non-formal case \(n_1\ge 2\), the stable page is
	represented by the following surviving classes:
	\[
	\sum\limits_{1\le i<j\le n_1+n_2}(e_{i,j}\parallel e_{i,j}) \quad \text{in } \operatorname{HH}^0,
	\]
	and
	\[
	\sum\limits_{i=2}^{n_1}(u_{1,i}\parallel u_{1,i})
	+
	\sum\limits_{i=2}^{n_2}(u_{1,n_1+i}\parallel u_{1,n_1+i}) \quad \text{in } \operatorname{HH}^1,
	\]
	together with the following additional classes when they exist:
	\[
	(u_{1,4}\parallel y_{1,4}y_{2,4}) \quad \text{in } \operatorname{HH}^{1-m}
	\quad
	\text{if }(n_1,n_2)=(2,2),
	\]
	and
	\[
	(x_{1,2}y_{1,3}\parallel u_{1,2}) \quad \text{in } \operatorname{HH}^{m+3}
	\quad
	\text{if }n_1=2.
	\]
\end{theorem}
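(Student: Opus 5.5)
The plan is to run the internal-degree spectral sequence set up just before the statement. Its $E_1$-page is given by Theorem~\ref{thm:graded_HH}, and it degenerates at $E_3$.

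\textbf{The formal case $n_1=1$.} By Theorem~\ref{thm:A_infty_qiso}(1) the algebra is formal, so $m_3=0$ and all $d_r$ vanish for $r\geq 1$. Hence $\operatorname{HH}^N(\widetilde{\mathcal A}_{1,n_2})=\bigoplus_{i+j=N}\operatorname{HH}^{i,j}(\mathcal A_{1,n_2})$. Reading off the rows $n_1=1$ of Theorem~\ref{thm:graded_HH} in total degree $i+j$ gives the first two rows of the table:
\begin{itemize}
\item $\operatorname{HH}^{0,0}$ contributes to $\operatorname{HH}^0$;
\item $\operatorname{HH}^{1,0}$ contributes to $\operatorname{HH}^1$;
\item $\operatorname{HH}^{1,-m}$ contributes to $\operatorname{HH}^{1-m}$.
\end{itemize}

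\textbf{The case $n_1\ge 2$: possible differentials.} The nonzero $E_1$-entries sit in bidegrees $(0,0)$, $(1,0)$, $(3,-1)$, $(1,-m)$ and $(2,m+1)$. These have total degrees $0$, $1$, $2$, $1-m$ and $m+3$.
\begin{itemize}
\item Since $d_1\colon E_1^{i,j}\to E_1^{i+2,j-1}$, the only pair of nonzero entries it can connect is $E_1^{1,0}\to E_1^{3,-1}$. For instance, $E_1^{0,0}\to E_1^{2,-1}=0$, $E_1^{1,-m}\to E_1^{3,-m-1}=0$, and $E_1^{2,m+1}$ maps to Hochschild degree $4$, which vanishes by Remark~\ref{rem:reduction system}. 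Also nothing nonzero lands in $(1,-m)$ or $(2,m+1)$, since that would require a source in Hochschild degree $-1$ or $0$ with $j=m+1$ or $m+2$, and these vanish. Here I check this as an honest equality of bidegrees, not only for generic $m$.
\item For $d_2\colon E_2^{i,j}\to E_2^{i+3,j-2}$, the only source in Hochschild degree $0$ is $(0,0)$, and its target $E_2^{3,-2}$ is zero.
\end{itemize}
So everything reduces to computing the single map $d_1\colon \operatorname{HH}^{1,0}\to\operatorname{HH}^{3,-1}$, induced by $[m_3,-]$.

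\textbf{Computing $d_1$.} Write $D_1=\sum_{i=2}^{n_1}(u_{1,i}\parallel u_{1,i})$ and $D_2=\sum_{i=2}^{n_2}(u_{1,n_1+i}\parallel u_{1,n_1+i})$. These are the basis of $\operatorname{HH}^{1,0}$ found in Lemmas~\ref{lemma:HH-A2-n2} and~\ref{lemma: HH of tilde A_{n1,n2}}. They are Euler-type derivations: $D_1$ multiplies each basis path by the number of $u$-arrows from the first group $\{u_{1,i}\}_{i\le n_1}$ that it contains, and $D_2$ does the same for the second group. As Hochschild cochains they are diagonal, so the Gerstenhaber bracket of such a weight derivation with a homogeneous cochain $c$ is (input weight $-$ output weight)$\cdot c$, up to a global sign.

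By Theorem~\ref{thm:A_infty_qiso}(2), the cochain $m_3$ has input $(pu_{1,n_1},x_{n_1,n_1+1},y_{n_1,n_1+2}q)$, which carries one first-group $u$. Its output $p\,x_{1,n_1}x_{1,n_1+1}u_{1,n_1+2}\,q$ carries one second-group $u$. Hence
\[
[m_3,D_1]=\pm m_3,\qquad [m_3,D_2]=\mp m_3.
\]

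Next I identify $m_3$ with the generator of $\operatorname{HH}^{3,-1}$. Under the comparison between the reduction-system complex and the bar complex, the class of $m_3$ is represented by $(u_{1,n_1}x_{n_1,n_1+1}y_{n_1,n_1+2}\parallel x_{1,n_1}x_{1,n_1+1}u_{1,n_1+2})$. I would check this by evaluating both cochains on the unique $1$-ambiguity in $N_2$, whose length-three path is exactly the basic input of $m_3$. It follows that $d_1$ has rank one. It kills $\operatorname{HH}^{3,-1}$ and the class of $D_1$, and leaves $D_1+D_2$. These are exactly the surviving classes listed in the statement.

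Thus $E_\infty$ consists of $\operatorname{HH}^{0,0}$, the class of $D_1+D_2$, $\operatorname{HH}^{1,-m}$ (nonzero only for $(2,2)$), and $\operatorname{HH}^{2,m+1}$ (nonzero only for $n_1=2$). Summing along total degree gives the remaining rows of the table. Where displayed total degrees coincide for special $m$, the dimensions simply add, because the $E_\infty$ count is additive.

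\textbf{Main obstacle.} I expect the hard step to be the rigorous identification in the $d_1$ computation. One must show that the Gerstenhaber bracket, transported to the Chouhy--Solotar complex through comparison morphisms, really acts by the weight formula, and that $m_3$ corresponds to the $N_2$ cocycle up to a coboundary and a nonzero scalar. My plan is to use that weight derivations lift to the resolution $P_\bullet$ diagonally, acting by the weight on $\Bbbk S_n$. This gives the weight formula directly, without computing the comparison maps in general, and the nonzero scalar then follows from the evaluation on $N_2$.
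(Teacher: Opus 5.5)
Your overall route is the same as the paper's: the internal-degree spectral sequence with $E_1$ taken from Theorem~\ref{thm:graded_HH}, a single relevant $d_1=[m_3,-]$ from $\operatorname{HH}^{1,0}$ to $\operatorname{HH}^{3,-1}$ that kills $\omega$ and one combination of $\theta_1,\theta_2$, and $d_2=0$. Your weight-derivation argument is a cleaner justification of the paper's ``direct computation'' $d_1(\theta_1)=-d_1(\theta_2)=\pm\omega$.

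\textbf{The gap.} It lies in the degree bookkeeping, which you explicitly claim holds for every $m$ and not only for generic $m$. It fails at two special values.
\begin{enumerate}
\item For $n_1=2$ and $m=-2$ we have $(2,m+1)=(2,-1)$. Then $d_1\colon E_1^{0,0}\to E_1^{2,-1}$ runs between nonzero spaces: the source contains the unit class and the target contains $(x_{1,2}y_{1,3}\parallel u_{1,2})$. So your assertion $E_1^{2,-1}=0$ is false there. This is not cosmetic. A nonzero $d_1$ here would kill both $\operatorname{HH}^0$ and the $\operatorname{HH}^{m+3}$ class, and the table would change.
\item For $(n_1,n_2)=(2,2)$ and $m=0$ we have $(1,-m)=(1,0)$ and $(3,-m-1)=(3,-1)$. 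So $E_1^{1,0}$ is three-dimensional, and you computed $d_1$ on only two of its basis vectors. Your weight formula says nothing about $(u_{1,4}\parallel y_{1,4}y_{2,4})$, which is not a weight derivation.
\end{enumerate}

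\textbf{Closing both cases.} Each needs one short check.
\begin{enumerate}
\item For the unit $\sum(e_{i,j}\parallel e_{i,j})$: $[m_3,1]=0$ because $m_3$ vanishes whenever one argument is an idempotent, i.e.\ the minimal model is strictly unital. Since nothing can hit Hochschild degree $0$, the unit class survives.
\item For $(2,2)$ with $m=0$, the dimension count already follows from $\operatorname{rank} d_1\le \dim E_1^{3,-1}=1$ together with $d_1(\theta_1)\neq 0$. To justify the explicit surviving class, however, you need $[m_3,(u_{1,4}\parallel y_{1,4}y_{2,4})]=0$. Here the only nonzero value of $m_3$ is $\pm x_{1,2}x_{1,3}u_{1,4}$, on the input $(u_{1,2},x_{2,3},y_{2,4})$. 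Substituting into the output gives $x_{1,2}x_{1,3}y_{1,4}y_{2,4}=x_{1,2}y_{1,3}x_{2,3}y_{2,4}=0$. No input of $m_3$ contains $u_{1,4}$, and the substitution cannot produce any such input.
\end{enumerate}
These checks are exactly the content of the paper's clause that the remaining classes ``either have no possible target for degree reasons or have zero bracket with $m_3$.''

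With these two points added, your argument is complete. It still relies on transferring the bracket to the reduction-system complex, which you rightly flag as the main obstacle and which the paper also leaves implicit.
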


\begin{proof}
	If \(n_1=1\), then by Theorem~\ref{thm:A_infty_qiso},
	\(\widetilde{\mathcal A}_{1,n_2}\) is formal. Hence
	\(
	\operatorname{HH}^\ast(\widetilde{\mathcal A}_{1,n_2})
	\cong
	\operatorname{HH}^\ast(\mathcal A_{1,n_2},m_2).
	\)
	Therefore the desired dimensions are obtained directly from the bigraded
	Hochschild cohomology of the graded algebra \((\mathcal A_{1,n_2},m_2)\) in
	Theorem~\ref{thm:graded_HH}, by passing from bidegree \((i,j)\) to total degree
	\(i+j\). This proves the first two rows of the table.
	
	Now suppose that \(n_1\ge 2\). By Theorem~\ref{thm:A_infty_qiso},
	\(\widetilde{\mathcal A}_{n_1,n_2}\) admits a minimal \(A_\infty\)-model \[(\mathcal A_{n_1,n_2},m_2,m_3),\] with \(
	m_r=0 \) for \(r\ge 4.\)
	Thus
	\(
	\operatorname{HH}^\ast(\widetilde{\mathcal A}_{n_1,n_2})
	\cong
	\operatorname{HH}^\ast(\mathcal A_{n_1,n_2},m_2,m_3).
	\)
	
	We use the spectral sequence associated with the internal degree filtration on
	the Hochschild cochain complex of this minimal model. Its \(E_1\)-page is
	\(
	E_1^{i,j}
	=
	\operatorname{HH}^{i,j}(\mathcal A_{n_1,n_2},m_2),
	\)
	which is computed in Theorem~\ref{thm:graded_HH}. By Remark~\ref{rem:reduction system},
	the complex is concentrated in Hochschild degrees \(i=0,1,2,3\). Hence the only
	possible nonzero \(d_1\)-differentials are
	\[
	d_1:E_1^{0,j}\longrightarrow E_1^{2,j-1},
	\qquad
	d_1:E_1^{1,j}\longrightarrow E_1^{3,j-1}.
	\]
	
	The only nontrivial contribution comes from the ternary operation
	\[
	m_3(u_{1,n_1},x_{n_1,n_1+1},y_{n_1,n_1+2})
	=
	(-1)^m x_{1,n_1}x_{1,n_1+1}u_{1,n_1+2}.
	\]
	Set
	\(
	\theta_1=
	\sum\limits_{i=2}^{n_1}(u_{1,i}\parallel u_{1,i})\), \(\theta_2=
	\sum\limits_{i=2}^{n_2}(u_{1,n_1+i}\parallel u_{1,n_1+i}),
	\)
	and
	\[
	\omega=
	(u_{1,n_1}x_{n_1,n_1+1}y_{n_1,n_1+2}
	\parallel
	x_{1,n_1}x_{1,n_1+1}u_{1,n_1+2}).
	\]
	
	A direct computation of the bracket with \(m_3\) gives \(d_1(\theta_1)=(-1)^m\omega\) and \(d_1(\theta_2)=-(-1)^m\omega.\) Hence \(d_1(\theta_1+\theta_2)=0,\) while the class \(\omega\) is killed on the \(E_2\)-page.
	
	All other classes listed in Theorem~\ref{thm:graded_HH} either have no possible
	target for degree reasons or have zero bracket with \(m_3\). Therefore the
	surviving class in degree \(1\) is \(\theta_1+\theta_2.\)
	Moreover, the class \((u_{1,4}\parallel y_{1,4}y_{2,4})\)
	survives in the exceptional case \((n_1,n_2)=(2,2)\), contributing to
	\(\operatorname{HH}^{1-m}\), and the class \((x_{1,2}y_{1,3}\parallel u_{1,2})\)
	survives whenever \(n_1=2\), contributing to \(\operatorname{HH}^{m+3}\).
	
	The next possible differential is \(d_2:E_2^{i,j}\longrightarrow E_2^{i+3,j-2}.\)
	The only class in Hochschild degree \(3\) on the \(E_1\)-page is \(\omega\), and
	\(\omega\) lies in the image of \(d_1\). Hence \(E_2^{3,*}=0.\) Therefore \(d_2\) has no nonzero target, and so \(d_2=0\). All higher
	differentials vanish for degree reasons, since the complex is concentrated in
	Hochschild degrees \(0,1,2,3\). Thus \(E_2=E_3=E_\infty.\)
	
	Finally, the dimensions of the Hochschild cohomology of
	\(\widetilde{\mathcal A}_{n_1,n_2}\) are obtained by summing the dimensions of
	the stable page along total degree:
	\[
	\dim_{\Bbbk} \operatorname{HH}^n(\widetilde{\mathcal A}_{n_1,n_2})
	=\sum\limits_{i+j=n}\dim_{\Bbbk} E_\infty^{i,j}.
	\]
	Using the surviving classes described above gives exactly the stated table.
\end{proof}

\begin{remark}
	It was shown in \cite{schrollsolotarwen} that, for every symmetric product of a disk with stops, the Hochschild cohomology of the associated partially wrapped Fukaya category is one-dimensional and concentrated in degree zero. Consequently, the corresponding higher Auslander algebra is rigid, in the sense that it admits no nontrivial deformations. In contrast, Theorem~\ref{thm:dg_HH_dimensions} shows that, for the symmetric square of an annulus with stops, the Hochschild cohomology can be nonzero in several degrees.
	
	This naturally raises the question of the geometric meaning of these additional Hochschild cohomology classes. For partially wrapped Fukaya categories of surfaces, certain generators of Hochschild cohomology admit a geometric interpretation in terms of the boundary data: contributions arise both from boundary components carrying exactly one stop and from boundary components carrying no stops; see \cite{Chaparro-Schroll-Solotar-SuarezAlvarez2026} for the ungraded case and \cite{Bian-Schroll-Solotar-Wang-Wen2026} for the graded setting. It would be interesting to determine whether the additional classes arising from the symmetric square of an annulus admit a similar geometric interpretation. 
\end{remark}

\subsection{Dg deformations of the symmetric square dg algebras}\label{deformed-alg}

Using our computation of Hochschild cohomology, we now describe the nontrivial degree-two deformation directions of the symmetric square dg algebras \(\widetilde{\mathcal A}_{n_1,n_2}\). Throughout this subsection, we use the grading convention from Subsection~\ref{subsec:grading}:
\(|x_{i,j}|=|y_{i,j}|=0\), \(|z_{i,i+1}|=-1,\)
and
\[
|u_{1,k}|= \begin{cases}
	m+1 & \text{for $2\leq k\leq n_1$},\\ 
	m & \text{for $n_1+2\leq k\leq n_1+n_2$}.
\end{cases}
\]
The degree-two Hochschild cohomology is nonzero only in the following cases:
\[
(n_1,n_2,m)=(1,2,-1),\qquad (2,2,-1),\qquad (2,n_2,-1)\quad (n_2\geq 3).
\]
Equivalently, the corresponding nontrivial deformation directions are represented by the cocycles listed below. In the case \((n_1,n_2,m)=(2,2,-1)\), two independent degree-two directions occur.

\begin{enumerate}[label=(\arabic*), leftmargin=*]
	\item \((n_1,n_2,m)=(1,2,-1)\). The cocycle $(u_{1,3}\parallel y_{1,3})$ induces a dg deformation \(\widetilde{\mathcal A}_{1,2}^{\mathrm{def}}\)  of \(\widetilde{\mathcal A}_{1,2}\). Namely, \(\widetilde{\mathcal A}_{1,2}^{\mathrm{def}} =\widetilde{\mathcal A}_{1,2}\) as graded algebras and the differential agrees with the original one on all generators except that \(d_{\mathrm{def}}(u_{1,3})=y_{1,3}.\)
	See Figure~\ref{fig:deformed-quiver-1-2}.
	
	\item \((n_1,n_2,m)=(2,2,-1)\). The cocycle $(u_{1,4}\parallel y_{1,4}y_{2,4})$ also induces a dg deformation  \(\widetilde{\mathcal A}_{2,2}^{\mathrm{def}}\) of  \(\widetilde{\mathcal A}_{2,2}\) so that the differential agrees with the original one except that \(d_{\mathrm{def}}(u_{1,4})=y_{1,4}y_{2,4}.\)
	See Figure~\ref{fig:deformed-quiver-2-2}.

	\item \((n_1,n_2,m)=(2,n_2,-1)\) with \(n_2\geq 2\). The cocycle $(x_{1,2}y_{1,3}\parallel u_{1,2})$ gives an associative deformation of the multiplication on the $A_\infty$-algebra $\mathcal A_{2, n_2}$. Namely, in the deformed algebra $\mathcal A_{2, n_2}^{\mathrm{def}}$ we have a new non-admissible relation $x_{1,2} y_{1,3}=u_{1,2}$ and all the other relations are unchanged.  Note that this deformation can be induced by a dg deformation   \(\widetilde{\mathcal A}_{2,n_2}^{\mathrm{def}}\) over the dg algebra \(\widetilde{\mathcal A}_{2,n_2}\) given by \(d_{\mathrm{def}}(z_{1,2}) = x_{1,2} y_{1,3} - u_{1,2}.\) 
\end{enumerate}

\begin{figure}[H] 
	\captionsetup{width=1.2\linewidth} \centering 
	
	\begin{minipage}[b]{0.48\textwidth} 
		\centering 
		\begin{tikzcd}[column sep=large, row sep=normal] 
			L_{1,2} \ar[r, "x_{1,2}"] \ar[dr, red, "z_{1,2}", swap]& L_{1,3} \ar[d, "y_{1,3}"] \ar[blue, d, bend right=30, "u_{1,3}" swap] & \\ & L_{2,3} 
		\end{tikzcd}
		
		\vspace{0.15cm}
		$d_{\mathrm{def}}(u_{1,3})=y_{1,3}.$ 
		
		\caption{Deformation of $\widetilde{\mathcal{A}}_{1,2}$.} 
		\label{fig:deformed-quiver-1-2} 
	\end{minipage} 
	\hfill 
	\begin{minipage}[b]{0.48\textwidth} 
		\centering 
		\begin{tikzcd} 
			L_{1,2} \ar[r, "x_{1,2}"] \ar[blue, dr, "u_{1,2}", swap,bend right=50] \ar[dr,"z_{1,2}",red, swap]& L_{1,3} \ar[r, "x_{1,3}"] \ar[d, "y_{1,3}"]& L_{1,4} \ar[d, "y_{1,4}"] \ar[blue, dd, bend right=40, "u_{1,4}" {xshift=2pt, yshift=-8pt}, swap]\\ & L_{2,3} \ar[dr,"z_{2,3}",red, swap] \ar[r, "x_{2,3}"] & L_{2,4} \ar[d, "y_{2,4}"]\\ && L_{3,4} 
		\end{tikzcd}
		
		\vspace{0.15cm} 
		$d_{\mathrm{def}}(u_{1,4})=y_{1,4}y_{2,4}.$ 
		
		\caption{Deformation of $\widetilde{\mathcal{A}}_{2,2}$.} 
		\label{fig:deformed-quiver-2-2} 
	\end{minipage} 
	
\end{figure}

\subsection*{Acknowledgments} The authors wish to express their sincere gratitude to Nanqing Ding for his guidance and support. The authors are also very grateful to Yuan Gao, Gustavo Jasso, Sibylle Schroll, Can Wen and Tianyu Yuan for many useful comments and discussions. This work was supported by the National Key R\&D Program of China (2024YFA1013803) and the NSFC (Nos.~12271243, 12371043).

\end{document}